\newtheorem{theorem}{\textbf{Theorem}}[section]
\newtheorem{lemma}[theorem]{\textbf{Lemma}}
\newtheorem{proposition}[theorem]{\textbf{Proposition}}
\newtheorem{claim}[theorem]{\textbf{Claim}}
\newtheorem{remark}[theorem]{\textbf{Remark}}
\numberwithin{equation}{section}
\g@addto@macro\th@plain{\thm@headpunct{}}
\newcommand\bea{\begin{eqnarray}}
\newcommand\eea{\end{eqnarray}}
\newcommand\beaa{\begin{eqnarray*}}
\newcommand\eeaa{\end{eqnarray*}}
\title{Sharp estimates for the spreading speeds of the Lotka-Volterra competition-diffusion system: the strong-weak type}
\date{}
\begin{document}

\maketitle

\begin{center}
{\large\bf
Chang-Hong Wu
\footnote{Department of Applied Mathematics, National Yang Ming Chiao Tung University, Hsinchu, Taiwan.

e-mail: {\tt changhong@math.nctu.edu.tw}},
Dongyuan Xiao\footnote{IMAG, Univ. Montpellier, CNRS, Montpellier, France.

e-mail: {\tt dongyuan.xiao@umontpellier.fr}} and
Maolin Zhou\footnote{Chern Institute of Mathematics and LPMC, Nankai University, Tianjin, China.

e-mail: {\tt zhouml123@nankai.edu.cn}}
} \\
[2ex]
\end{center}


\tableofcontents

\vspace{10pt}

\begin{abstract}
We consider the classical two-species Lotka-Volterra competition-diffusion system in the strong-weak competition case.
When the corresponding minimal speed of the traveling waves is not linear determined,
we establish the precise asymptotic behavior of the solution of the Cauchy problem
in two different situations: (i) one species is an invasive one and the other is a native species;
(ii) both two species are invasive species.
\\

\noindent{\underline{Key Words:} competition-diffusion system,
Cauchy problem, long-time behavior, traveling waves}\\

\noindent{\underline{AMS Subject Classifications:}  35K57 (Reaction-diffusion equations), 35B40 (Asymptotic behavior of solutions).}
\end{abstract}

\section{Introduction}\label{s:intro}

We consider the following two-species Lotka-Volterra competition-diffusion system
\begin{equation}\label{system}
\left\{
\begin{aligned}
&\partial_tu=u_{xx}+u(1-u-av), & t>0,\ x\in\mathbb{R},\\
&\partial_tv=dv_{xx}+rv(1-v-bu), & t>0,\ x\in \mathbb{R},
\end{aligned}
\right.
\end{equation}
where $u=u(t,x)$ and $v=v(t,x)$ represent the population densities of two competing species at the time $t$ and position $x$. Here, all parameters are assumed to be positive: $d$ and $r$ stand for the diffusion rate and intrinsic growth rate of $v$, respectively; $a$ and $b$ represent the competition coefficient of $v$ and $u$, respectively. In present paper, we focus on the strong-weak competition case:
\begin{itemize}
\item[{\bf(H1)}] $0<a<1<b$.
\end{itemize}
The condition {\bf(H1)} indicates that species $u$ is the superior species; while $v$ is an inferior one.

Early in 1937, Fisher \cite{Fisher} and Kolmogorov, Petrovsky, and Piskunov \cite{KPP} introduced
a scalar reaction-diffusion equation $w_t=w_{xx}+f(w)$
with monostable nonlinearity

$$f'(0)>0,\ f'(1)<0,\ f(0)=f(1)=0,\ f(w)>0\ \text{for all}\ (0,1),$$
to model the propagation of dominant gene in a homogeneous environment. With the so-called KPP condition
$f'(0)w\geq f(w)$ for all $w\in[0,1]$, they proved that traveling wave solutions of the form $w(t, x)=W(x-ct)$
connecting the states  $1$  and $0$ exist if and only if
$c\geq c_{\min}:=2\sqrt{f'(0)}$, where $c_{\min}$ is called the minimal wave speed.
Moreover, they found a mathematical approach to describe the propagation of dominant gene by studying the long-time behavior of the solution to the Fisher-KPP equation:
\begin{equation}\label{single kpp}
\left\{
\begin{aligned}
&w_t=w_{xx}+f(w),\ \ t>0,\ x\in\mathbb{R},\\
&w(0,x)=w_0(x),\ \ x\in\mathbb{R},
\end{aligned}
\right.
\end{equation}
where $w_0(x):=H(-x)$, and $H(x)$ is the Heaviside function.

In the case that $w_0(x)\not\equiv 0$ is a nonnegative compactly supported function, 
Aronson and Weinberger \cite{Aronson Weinberger} showed that there exists a unique speed $c_w$ such that the solution of (\ref{single kpp}) satisfies
\beaa
\lim_{t\to\infty}\sup_{|x|\ge ct}w(t,x)=0\ \ \mbox{for all} \ \ c>c_w\quad\text{and}\quad
\lim_{t\to\infty}\sup_{|x|\le ct}|1-w(t,x)|=0\ \ \mbox{for all} \ \ c<c_w.
\eeaa
Moreover, the spreading speed $c_w$ coincides with the minimal wave speed $c_{\min}$.
The propagation phenomenon and inside dynamics
of the front for more general scalar equation have been discussed widely in the literature.
We may refer to, e.g., \cite{Berestycki Hamel2012,Bramson1983,Fife McLeod1977, Gardner et al 2012,log delay 3, Liang Zhao 2007,Roques et al 2012, Rothe1981,Stokes1976,Uchiyama1978} and references cited therein.

To understand the long-time behavior of solutions of system \eqref{system}, traveling wave solutions play an important role.
In the absence of the one species, namely $u$ or $v$,  system \eqref{system} can be reduced to the single scale Fisher-KPP equation like \eqref{single kpp}, which admits a unique (up to translations) traveling wave solution $U_{KPP}(x-ct)$ (resp. $V_{KPP}(x-ct)$) with the minimal speed
\beaa
c_u:=2,\quad(resp.\ \ c_v:=2\sqrt{rd}).
\eeaa
Regarding the traveling wave solutions of system \eqref{system} with {\bf(H1)}, Kon-on  \cite{Kan-on1997} showed
that there exists a unique speed $c^*\in[2\sqrt{1-a},2]$ 
such that system \eqref{system}
admits a solution $(c,U,V)$
satisfying
\begin{equation}\label{tw solution}
\left\{
\begin{aligned}
&U''+cU'+U(1-U-aV)=0,\\
&dV''+cV'+rV(1-V-bU)=0,\\
&(U,V)(-\infty)=(1,0),\ (U,V)(\infty)=(0,1),\\
&U'<0,\ V'>0,
\end{aligned}
\right.
\end{equation}
if and only if $c\ge c^*$. Thus, $c^*$ is called the minimal traveling
wave speed of system \eqref{tw solution}.

The linear determinacy of $c^*$ has been widely discussed over several decades to understand
the dynamics of diversity for invasive species.
It is said that linear determinacy holds if $c^*=2\sqrt{1-a}$ since the linearization of \eqref{tw solution} at the unstable state $(0,1)$ results in the linear speed $2\sqrt{1-a}$
(see \cite{Lewis Li Weinberger 1, Weinberger Lewis Li 2002}). In this case, $c^*$ is also said to be linear or be linearly selected. If $c^*>2\sqrt{1-a}$, we say that linear determinacy does not hold, nonlinear determinacy holds, or $c^*$ is nonlinear selected. Another terminology comes from Stokes \cite{Stokes1976}. We may say that $c^*=2\sqrt{1-a}$ is "pulled fronts" case since the propagation speed is determined only by the leading edge of the distribution of the population; while $c^*>2\sqrt{1-a}$
called "pushed fronts" case since the propagation speed is not determined by the behavior of the leading edge of the population distribution, but by the whole wavefront.
We also refer to the work of Roques et al. \cite{Roques et al 2015} that introduced another definition of pulled and pushed fronts for system \eqref{system}.
In present paper, we mainly focus on the "pushed fronts" case:
\begin{itemize}
\item[{\bf(H2)}] $c^*>\sqrt{1-a}$.
\end{itemize}

Linear/nonlinear determinacy of the minimal traveling wave speed of system \eqref{tw solution} has been investigated
in the literature.
Among them, Lewis, Li and Weinberger \cite{Lewis Li Weinberger 1} showed that linear determinacy holds when
\bea\label{LLW-cond}
0<d<2\quad {\rm and}\quad  r(ab-1)\leq (2-d)(1-a).
\eea
An improvement for the sufficient condition for linear determinacy were made by Huang \cite{Huang2010}:
\begin{equation}\label{huang-condition}
\frac{(2-d)(1-a)+r}{rb}\ge \max\Big\{a,\frac{d-2}{2|d-1|}\Big\}.
\end{equation}
Note that \eqref{LLW-cond} and \eqref{huang-condition} are equivalent when $d\leq 2$.
Roques et al. \cite{Roques et al 2015} numerically suggested that the parameters region for linear determinacy
can still be improved. More recently, Alhasanat and Ou \cite{Alhasanat Ou2019} made some improvements.

For $c^*$ being nonlinear selected, Huang and Han \cite{HuangHan2011} constructed examples in which linear determinacy fails to hold under the conditions: $r=d$ and $a$ is sufficiently close to $1$. Alhasanat and Ou \cite{Alhasanat Ou2019}
proved that $c^*$ is nonlinear if
\beaa
\frac{(d+2)(1-a)+r}{rb}<1-2(1-a).
\eeaa
Therefore, the assumption {\bf(H2)} is not void.
For related discussions, we also refer to, e.g.,
\cite{Alhasanat Ou2019-1, Guo Liang, Holzer Scheel 2012, Hosono 1998, Hosono 2003} and the references cited therein.

For the "pulled fronts" case $c^*=2\sqrt{1-a}$,
the long-time behavior of the solution of system \eqref{system}  is more complicated.
We strongly believe that logarithmic phase drift of the location of the wavefront exists as what happens for the scalar monostable equation. This problem will be discussed in our forthcoming paper.

\subsection{Main results}

The purpose of this paper is to establish
the sharp estimate on the long-time behavior of the solution of system \eqref{system} in the "pushed fronts" case {\bf(H2)}
with two different scenarios for initial data
$(u_0,v_0)$:

\begin{equation}\label{initial data}
\begin{aligned}
&u(0,x)=u_0(x)\in C(\mathbb{R},[0,1])\setminus\{0\}:\ \mbox{with compact support},\\
&v(0,x)=v_0(x)\in C(\mathbb{R},[0,1])
:\ \mbox{with a positive lower bound,}
\end{aligned}
\end{equation}
or
\begin{equation}\label{initial data2}
\begin{aligned}
(u,v)(0,x)=(u_0,v_0)(x)\in [C(\mathbb{R},[0,1])\setminus\{0\}]^2:\ \mbox{both are with compact support}.
\end{aligned}
\end{equation}
Biologically, \eqref{initial data} means that the species $u$ is the invasive species, while $v$ is the native species occupying the whole space;
\eqref{initial data2} indicates that both two species are invasive species.

There is a wide variety of literature regarding 
the traveling wave solution and (asymptotic) spreading speeds for system (\ref{system}).
In the weak competition case (i.e., $a<1,\ b<1$),
Tang and Fife \cite{Tang Fife 1980} established the existence of the minimal wave speed for traveling waves connecting
(0,0) and the coexistence state. For the Cauchy problem,
Lin and Li \cite{Lin Li} considered system (\ref{system}) with compactly supported initial functions and obtained the spreading speed of the faster species
and some estimates on the speed of the slower species. More recently, Liu, Liu and Lam \cite{Liu Liu Lam 1, Liu Liu Lam 2} obtained  rather complete results.

In the strong (bistable) competition case (i.e., $a>1,\ b>1$),
the existence of traveling waves connecting $(0,1)$ and $(1,0)$ was established by Gardner \cite{Gardner}, Conley and Gardner \cite{Conley Gardner 1984}
and Kan-on \cite{Kan-On}. For the Cauchy problem,
Carrere \cite{Carrere} studied the asymptotic spreading speed of the solution with initial data which are absent on the right half-line $x>0$ , and the slower species dominates the faster one on the left half-line $x<0$. More recently, Peng, Wu and Zhou \cite{Peng Wu Zhou} provided rigorous estimates on the spreading speed and profiles of the solution as $t\to\infty$.

In the critical competition case (i.e., $a=b=1,$), Alfaro and Xiao \cite{Alfaro Xiao} proved the non-existence of traveling waves with some monotonicity. Moreover, they studied the large time behavior of the solution of the Cauchy problem with compactly supported initial data. More precisely, they not only reveal that the "faster" species excludes the "slower" one, but also found a new bump phenomenon which provides a sharp description of the profile of the solution.

Regarding the strong-weak (monostable) competition case (i.e., {\bf(H1)}), the asymptotic spreading speed of the Cauchy problem
was firstly studied by Lewis, Li and Weinberger \cite{Lewis Li Weinberger 1, Lewis Li Weinberger 2} with $(u_0,v_0)$ satisfying
$0\leq u_0(x)\leq 1$ and $0\leq 1-v_0(x)\leq 1$, and both $u_0$ and $1-v_0$ are compactly supported functions.
Recently,  Girardin and Lam \cite{Girardin Lam} studied the spreading speed of the Cauchy problem
with initial data that are null or exponentially decay on the right half line. They obtained a complete understanding of the spreading properties by constructing very technical super-solutions and sub-solutions. Among other things, they also found that a so-called "nonlocal pulling" phenomenon may happen in some cases.

Regarding the study of the spreading property for other reaction-diffusion systems,
we refer to \cite{Iida Lui Ninomiya,Lui, Roquejoffre Terman Volpert} for monotone systems;
\cite{Ducrot Giletti Matano, Mori Xiao} for non-cooperative systems.


Our first result considers the scenario that the initial data $(u_0,v_0)$ satisfies \eqref{initial data}.
The spreading speed has been obtained in \cite{Lewis Li Weinberger 1}.
Here we establish the sharp long-time behavior of the solution when linear determinacy does not hold.


\begin{theorem}\label{th: profile}
Assume that {\bf(H1)}-{\bf(H2)} hold. Then
the solution $(u,v)$ of system (\ref{system}) with initial data (\ref{initial data}) satisfies
\begin{equation*}\label{profile of the solution cv>cu for small x}
\lim_{t\to\infty}\Big(\sup_{x\in[0,\infty)} \big|u(t,x)-U(x-c^*t-h)\big|+\sup_{x\in [0,\infty)}\big|v(t,x)-V(x-c^*t-h)\big|\Big)=0,
\end{equation*}
where $h$ is a constant and $(c^*,U,V)$ is the minimal traveling wave defined as \eqref{tw solution}.
\end{theorem}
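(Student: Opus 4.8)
The plan is to combine the known spreading result of \cite{Lewis Li Weinberger 1} with a squeezing/sliding argument that exploits the competitive — hence order-preserving — structure of \eqref{system}: with respect to the twisted order $(u_1,v_1)\preceq(u_2,v_2)\iff u_1\le u_2$ and $v_1\ge v_2$, the reaction terms are quasi-monotone, so the parabolic comparison principle applies to the full system. First I would record, from \cite{Lewis Li Weinberger 1} together with standard parabolic estimates, that $u$ spreads to the right with speed $c^*$ while $v$ recedes at the same speed, so that for every $\varepsilon>0$ one has $(u,v)(t,x)\to(1,0)$ uniformly on $\{x\le(c^*-\varepsilon)t\}$, $(u,v)(t,x)\to(0,1)$ uniformly on $\{x\ge(c^*+\varepsilon)t\}$, and $(u,v)(t,\cdot+c^*t)$ is precompact in $C_{loc}(\mathbb{R})$. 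This already confines the nontrivial part of the profile to a window $x-c^*t=O(1)$.

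The heart of the proof is a steepness estimate. In the pushed regime {\bf(H2)} the minimal wave $U$ decays at the \emph{steep} rate $e^{-\lambda^* z}$, where $\lambda^*$ is the larger root of $\lambda^2-c^*\lambda+(1-a)=0$, with the corresponding exponential decay of $1-V$. I would show that the solution inherits at least this decay ahead of the front, i.e.\ there exist $C,T>0$ with $u(t,x)\le C e^{-\lambda^*(x-c^*t)}$ for $x\ge c^*t$ and $t\ge T$. This uses that $u_0$ is compactly supported — so initially $u$ decays faster than any exponential — and that $C e^{-\lambda^*(x-c^*t-\eta)}$ is a supersolution of the $u$-equation linearized at $(0,1)$; the matching lower barrier is a shifted copy of $(U,V)$ itself, using that $u$ becomes positive and that the minimal pushed wave is the steepest traveling supersolution at speed $c^*$. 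It is precisely here that {\bf(H2)} enters: in the pulled case the linear barrier would produce a logarithmic-in-$t$ drift and no fixed shift $h$ could exist.

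With matching exponential tails available, I would trap $(u,v)$ between two shifted, time-relaxing waves. Concretely, one seeks a supersolution of the form $\bar u(t,x)=U(z)+p\,e^{-\omega t}\psi_1(z)$, $\bar v(t,x)=V(z)-p\,e^{-\omega t}\psi_2(z)$ with $z=x-c^*t-h_2-q(e^{-\omega t}-1)$, and a symmetric subsolution with the signs reversed and shift $h_1\le h_2$; here $(\psi_1,\psi_2)$ is a suitable positive pair, $\omega>0$ is small, and the constants $p,q,h_1,h_2$ are fixed from the Step-2 barriers evaluated at time $T$. Substituting and invoking the exponential stability (with spectral gap) of the minimal pushed wave — the linearization of \eqref{tw solution} about $(U,V)$ in the appropriate weighted space has spectrum bounded away from the imaginary axis — the $O(e^{-\omega t})$ corrections absorb the nonlinear remainder, so these are genuine super/subsolutions for $t$ large. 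Comparison then yields $(U,V)(x-c^*t-h_2-q(e^{-\omega t}-1))\preceq(u,v)(t,x)\preceq(U,V)(x-c^*t-h_1-q(1-e^{-\omega t}))$, and a standard contraction of the admissible shift interval (re-running the construction on the narrower gap) forces $h_1=h_2=:h$ and drives the corrections to $0$, giving the uniform convergence on $[0,\infty)$; the resulting $h$ is the shift selected by the Step-2 barriers, so it encodes the initial data.

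The main obstacle I anticipate is Step 2 together with the spectral input used in Step 3: proving that the front of the \emph{system} is asymptotically as steep as the minimal pushed wave — equivalently, excluding any logarithmic correction of its location — and establishing (or carefully invoking) the exponential stability with a spectral gap of $(c^*,U,V)$ for the two-component problem rather than a scalar equation. The competitive monotone structure makes the comparison machinery available, but constructing the positive pair $(\psi_1,\psi_2)$ and checking that the time-decaying corrections dominate the coupling terms, which mix $u$ and $v$ at order one near the front, is where the genuine work lies; in addition, controlling the region behind the front, where $v$ is only known to remain near $0$ and the positive lower bound on $v_0$ must be shown not to interfere, will require an auxiliary comparison with the pure-$u$ Fisher--KPP front.
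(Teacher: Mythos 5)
Your overall skeleton (trap $(u,v)$ between two shifted copies of the minimal pushed wave carrying exponentially-in-time decaying corrections and an exponentially relaxing shift, using the twisted-order comparison principle) is exactly the strategy of the paper, and your observation that {\bf(H2)} enters through the steep decay rate $|\lambda_u^-(c^*)|$ of $U$ is the right one. However, as written the proposal has three genuine gaps. First, your Step~3 leans on ``exponential stability with a spectral gap'' of the linearization about $(c^*,U,V)$ for the two-component problem. No such result is available for this system (even uniqueness of the wave is only partially known, cf.\ the remark after Lemma~\ref{lm:u v infity converges to tw}), and it cannot simply be invoked: establishing the needed stability \emph{is} the content of the paper, which does it by hand, choosing the correction profile $\min\{e^{-\alpha(\cdot)},1\}$ with $\alpha$ strictly between $|\lambda_u^+(c^*)|$ and $|\lambda_u^-(c^*)|$ and verifying the differential inequalities region by region via the asymptotics of $(U,V)$ at $\pm\infty$ (Lemmas~\ref{lm: behavior around + infty}--\ref{lm: behavior around - infty}, Lemmas~\ref{lm: sub sol inequality} and \ref{lm: super sol inequality}). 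You correctly flag this as ``where the genuine work lies,'' but flagging it does not close it. Second, your endgame — ``a standard contraction of the admissible shift interval'' forcing $h_1=h_2$ — is asserted, not proved; the Fife--McLeod squeezing lemma that makes this work is standard for scalar bistable/pushed equations but not off-the-shelf for this coupled system. The paper takes a different route here: it passes to $\omega$-limits along $t_n\to\infty$, shows by a sliding argument with the strong maximum principle and Harnack-type estimates that any limit trapped between two shifts of $(U,V)$ is an exact shift (Lemma~\ref{lm:u v infity converges to tw}), and then uses a Lyapunov-type local stability lemma (Lemma~\ref{lem:asymptotic stability of tw}) to make the shift independent of the sequence. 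Either route could work, but yours needs the quantitative contraction lemma to be actually constructed.

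Third, the region behind the front is underestimated. Since $v_0$ is bounded below by a positive constant on all of $\mathbb{R}$, one must prove that $(u,v)\to(1,0)$ \emph{exponentially in $t$} near $x=0$ in order to anchor the sub- and super-solutions at the left boundary; a comparison with the pure-$u$ Fisher--KPP front, as you suggest, controls neither $v$ nor the rate. The paper obtains this through a nontrivial auxiliary comparison with a strong--strong (bistable) system whose wave speed is made positive by taking $a^*$ close to $1$ (Lemma~\ref{lm: estimate of x=0}, relying on \cite{Ma Huang Ou} and \cite{Peng Wu Zhou}); an alternative two-step scalar comparison (first $u\ge 1-\delta$, $v\le\delta$ on expanding sets by spreading, then exponential decay of $v$ from $b>1$, then exponential convergence of $u$) could replace it, but some such argument must be supplied. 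Relatedly, your Step~2 lower barrier (``the minimal pushed wave is the steepest traveling supersolution at speed $c^*$'') is too vague to substitute for these boundary and tail estimates.
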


Next, we consider the scenario that the initial function $(u_0,v_0)$ satisfies \eqref{initial data2}.
In this case, the spreading property becomes more complicated:
the invading speed of the stronger species $u$ could be nonlocal determined in some cases,
 as reported in \cite{Girardin Lam}. To give a precise illustration, let us
recall the auxiliary function given in \cite{Girardin Lam}:
\begin{equation}\label{f}
f(c):=c-\sqrt{c^2-4(1-a)}+2\sqrt{a}\ \ \mbox{and}\ \ f^{-1}(c'):=\frac{c'}{2}-\sqrt{a}+\frac{2(1-a)}{c'-2\sqrt{a}}.
\end{equation}
Note that $f$ is a decreasing function. If $2\sqrt{rd}\in(2,f(c^*))$, then we define the accelerated speed
\begin{equation*}
c_{**}:=f^{-1}(2\sqrt{rd})=\sqrt{rd}-\sqrt{a}+\frac{1-a}{\sqrt{rd}-\sqrt{a}}\in(c^*,2).
\end{equation*}
It has been showed in \cite{Girardin Lam} that, if $c_v>c_u$, there exist two wavefronts. The fast one moves with the speed $c_v$. The slow one moves with the speed $\mathscr{C}$, which satisfies
\begin{equation}\label{definition of bar c}
\left\{
\begin{aligned}
&\mathscr{C}=c^*\ \ \mbox{if}\ \ c_v\in [f(c^*),\infty),\\
&\mathscr{C}=c_{**}\ \ \mbox{if}\ \ c_v\in (2,f(c^*)).
\end{aligned}
\right.
\end{equation}

In \cite{Girardin Lam}, they found this "nonlocal pulling" phenomenon from an observation on the behavior of the solution $(u,v)$ on the leading edge, namely the region where $u,v\approx 0$.
Let us define functions for $c\ge c^*$ and $c'\ge\max\{c,f(c)\}$ as follows:
\begin{equation*}
\Lambda(c,c'):=\frac{1}{2}\Big(c'-\sqrt{c'^2-4\lambda(c)(c'-c)-4}\Big)\ \ \mbox{with}\ \ \lambda(c)=\frac{1}{2}\big(c-\sqrt{c^2-4(1-a)}\big).
\end{equation*}
If $c_v>c_u$, then we can assume $v$ invades the uninhabited region ($u,v\approx 0$) with a speed $c_1\ge c_v$ and $u$ chase $v$ from behind with a speed $c_2\in[c^*,c_u]$. In the region where $v\approx 1$, the profile of $u$ converges to the traveling wave solution defined as \eqref{tw solution} with speed $c_2$. Therefore, we have
$u(t,x)\approx e^{-\lambda(c_2)(x-c_2t)}$. Define a new function $y(t,x)=u(t,x)e^{\lambda(c_2)(x-c_2t)}$. In the range $x=\tilde{c}t$ with $\tilde{c}>c_1$, where $u,v\approx 0$, it holds
\begin{equation}\label{equation of y}
\partial_t y-y_{xx}\approx(1+\lambda(c_2))(\tilde{c}-c_2)y.
\end{equation}
Then, by assuming the exponential ansatz $y(t,x)\approx e^{-\Lambda(x-\tilde{c}t)}$, \eqref{equation of y} leads to the equation
$$\Lambda^2-\tilde{c}\Lambda+(1+\lambda(c_2)(\tilde{c}-c_2))=0.$$
The minimal root of the this equation is equal to $\Lambda(c_2,\tilde{c})$, which exists if and only if
$$\tilde{c}^2-4(\lambda(c_2)(\tilde{c}-c_2)+1)\ge 0.$$
This inequality immediately implies that $\tilde{c}$ has to satisfy $\tilde{c}\ge f(c_2)$, which implies $c_1\ge f(c_2)$.

More precisely,
we have the following propagation properties:
\begin{proposition}[Theorem 1.1 in \cite{Girardin Lam}]\label{prop: spreading speed}
Let $(u,v)$ be the solution of system \eqref{system} with initial data
$u_0\in C(\mathbb{R},[0,1])\setminus\{0\}$
with support included in a left half-line and $v_0\in C(\mathbb{R},[0,1])\setminus\{0\}$ with compact support.
Then the following hold:
\begin{itemize}
	\item[(1)] If $c_u>c_v$, then it holds
	\begin{equation*}
	\lim_{t\to\infty}\Big(\sup_{x\in[0,\infty)}v(t,x)+\sup_{x\ge c_1t}u(t,x)+\sup_{0\le x\le c_2t}|1-u(t,x)|\Big)=0
	\end{equation*}
	for all $0<c_2<c_u<c_1$.
	\item[(2)] If $c_u<c_v$, then it holds
	\begin{equation*}
	\lim_{t\to\infty}\sup_{x\ge c_1t}\Big(u(t,x)+v(t,x)\Big)=0\quad\mbox{for all}\quad c_1>c_v;
	\end{equation*}
	\begin{equation*}
	\lim_{t\to\infty}\sup_{c_2t\ge x\ge c_3t}\Big(u(t,x)+|1-v(t,x)|\Big)=0\quad \mbox{for all}\quad c_v>c_2>c_3>\mathscr{C};
	\end{equation*}
	\begin{equation*}
	\lim_{t\to\infty}\sup_{c_4t\ge x\ge 0}\Big(|1-u(t,x)|+v(t,x)\Big)=0\quad \mbox{for all}\quad c_4<\mathscr{C}.
	\end{equation*}
\end{itemize}
\end{proposition}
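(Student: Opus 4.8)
The plan is to recover the whole propagation picture by trapping $(u,v)$ between tailored super- and sub-solutions, after first rewriting the competitive system in cooperative form. Setting $w:=1-v$, the pair $(u,w)$ solves a system which, on the invariant box $0\le u\le 1$, $0\le w\le 1$ (i.e. $0\le v\le 1$), has nonnegative off-diagonal terms in the reaction Jacobian ($au\ge 0$ and $rb(1-w)\ge 0$), hence is cooperative; the parabolic comparison principle for monotone systems then applies to ordered sub/super-solution pairs, and every bound is transported back to $(u,v)$. The ingredients are: (a) scalar Fisher--KPP comparison for the crude one-sided speed bounds; (b) the traveling waves $(c,U,V)$ of \eqref{tw solution} available for all $c\ge c^*$, together with the scalar waves $U_{KPP}$ and $V_{KPP}$, used to build sub-solutions that fill the bulk; and (c) exponential profiles of the form $e^{-\lambda(c_2)(x-c_2t)}$ in the zone where $v\approx 1$ and $e^{-\Lambda(c_2,\tilde c)(x-\tilde c t)}$ in the leading zone where $u,v\approx 0$, glued at the moving interface — this is exactly where the auxiliary functions $f$, $f^{-1}$ and $\Lambda$ enter.

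The easy half consists of the upper bounds and Case (1). Since $u_t\le u_{xx}+u(1-u)$ and $v_t\le dv_{xx}+rv(1-v)$ pointwise, scalar comparison shows $u$ cannot spread faster than $c_u$ and $v$ not faster than $c_v$; this already gives the first limit of (2) for every $c_1>c_v$ and the bound $\sup_{x\ge c_1t}u\to 0$ for $c_1>c_u$ in (1). When $c_u>c_v$, a compactly supported sub-solution for the $u$-equation spreading at speed arbitrarily close to $c_u$ (valid wherever $v$ has already been made small) shows $u\to 1$ on $\{0\le x\le c_2t\}$ for all $c_2<c_u$; then $v_t\le dv_{xx}+rv(1-bu+o(1))$ with $b>1$ forces $v\to 0$ on $[0,\infty)$, and a short bootstrap between the two estimates closes Case (1).

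The heart of the matter is Case (2), where the fast $v$-front and the trailing $u$-front must be located sharply. That the $v$-front moves at $c_v$ follows from the scalar upper bound together with a sub-solution of the $v$-equation near its leading edge, using that $u$ is exponentially small ahead of its own front (which cannot overtake the $v$-front, since there $v\approx 0$ and $u$ would then only propagate at its Fisher speed $c_u<c_v$). To pin the $u$-front at $\mathscr{C}$ one builds a three-zone sub-solution: a trailing piece from the traveling wave $(c_2,U,V)$ with $c_2$ slightly below $\mathscr{C}$; an intermediate piece where $v\approx 1$ and $u\sim e^{-\lambda(c_2)(x-c_2t)}$; and a leading piece where $u,v\approx 0$ and $u\sim e^{-\Lambda(c_2,\tilde c)(x-\tilde c t)}$, with $\tilde c\ge\max\{c_2,f(c_2)\}$ so that $\Lambda$ is real. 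If $c_v\ge f(c^*)$ one lets $c_2\uparrow c^*$ and the matching forces no acceleration, so $\mathscr{C}=c^*$; if $2<c_v<f(c^*)$, requiring the intermediate and leading exponential tails to match at the interface $x=c_vt$ forces $c_v=f(c_2)$, i.e. $c_2=f^{-1}(c_v)=c_{**}$ — the nonlocal pulling. The companion super-solution, built from the same exponential templates with reversed inequalities and $c_2$ slightly above $\mathscr{C}$, shows no faster invasion is possible; the algebraic solvability condition $\tilde c\ge f(c_2)$ for $\Lambda^2-\tilde c\Lambda+(1+\lambda(c_2)(\tilde c-c_2))=0$ is precisely the obstruction that caps the $u$-front at $\mathscr{C}$. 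Finally, $u\to 0$ on $\{x\ge c_3t\}$ for $c_3>\mathscr{C}$, fed into the $v$-equation, yields $v\to 1$ on the wake $\{c_3t\le x\le c_2t\}$ with $c_2<c_v$, and the trailing wave sub-solution gives $u\to 1$, $v\to 0$ on $\{0\le x\le c_4t\}$ with $c_4<\mathscr{C}$.

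The main obstacle is the construction and, above all, the gluing of these multi-zone sub/super-solutions in Case (2): one must choose the exponential rates $\lambda(c_2)$ and $\Lambda(c_2,\tilde c)$, the two moving interfaces, and small perturbation parameters so that the pieces are correctly ordered, satisfy the differential inequalities across each interface (the jump in the spatial derivative having the right sign), and survive the limits $c_2\to\mathscr{C}$ and $\tilde c\to c_v$. In other words, one has to upgrade the formal matching relation $c_v=f(c_2)$ extracted from the linear ansatz \eqref{equation of y} into a rigorous two-sided statement, showing that no nonlinear effect lets $u$ run beyond $\mathscr{C}$ nor prevents it from reaching $\mathscr{C}$. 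Controlling the intermediate zone, where neither $u\approx 0$ nor $v\approx 1$ holds exactly, and ruling out any slowdown of the $v$-front by the competition term, are the technical core of the proof.
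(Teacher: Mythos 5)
The paper does not prove this proposition: it is imported verbatim as Theorem~1.1 of Girardin and Lam \cite{Girardin Lam}, and the only related material in the paper is the heuristic derivation of $f$, $f^{-1}$ and $\Lambda$ via the ansatz \eqref{equation of y}. Your proposal is therefore being measured against a citation, not a proof. As a reconstruction of the cited argument it is on target: the cooperative change of variables $w=1-v$ (which does make the reaction Jacobian off-diagonally nonnegative on the box $0\le u\le1$, $0\le w\le1$), the scalar KPP upper bounds, the traveling-wave sub-solutions in the bulk, and the two-exponential matching of $e^{-\lambda(c_2)(x-c_2t)}$ against $e^{-\Lambda(c_2,\tilde c)(x-\tilde c t)}$ that produces the dichotomy $\mathscr{C}=c^*$ versus $\mathscr{C}=c_{**}$ are exactly the ingredients of \cite{Girardin Lam}, and they are consistent with the paper's own heuristic discussion preceding the proposition.

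As a standalone proof, however, your text is a program rather than an argument. The decisive steps are named as obstacles but not resolved: the ordered gluing of the three-zone sub- and super-solutions across moving interfaces (sign of the derivative jump, survival of the limits $c_2\to\mathscr{C}$ and $\tilde c\to c_v$); the bootstrap closing Case (1), where the crude bound $v\le 1$ only yields $u\gtrsim 1-a$ and, since $b(1-a)$ need not exceed $1$, this does not by itself force $v\to0$ — one needs the hairtrigger/attractivity argument for the state $(1,0)$ on expanding regions; and the rigorous two-sided upgrade of the formal matching relation $c_v=f(c_2)$. If your intention is to reprove Theorem~1.1 of \cite{Girardin Lam} rather than cite it, each of these must be carried out in detail; otherwise the correct move is precisely the citation the paper makes.
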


Here we first establish the convergence of the solution to system \eqref{system} with initial data
\eqref{initial data2}. For $c_u>c_v$, in view of statement (1) in Proposition~\ref{prop: spreading speed},
we see that $u$ is the only survival specie, so it can be seen as the fastest species.
Therefore, Corollary~4.6 in \cite{Peng Wu Zhou} can be applied to
obtain the propagating behavior of $u$ over $\{x\geq (c_v+\varepsilon) t\}$ for all $\varepsilon>0$ and large $t$.
Thus, combining Proposition~\ref{prop: spreading speed}(i) and \cite[Corollary~4.6]{Peng Wu Zhou},
we immediately conclude that

\begin{proposition}
Assume that {\bf(H1)} holds.
If $c_u>c_v$, then the solution $(u,v)$ of system (\ref{system}) with initial data (\ref{initial data2}) satisfies
\beaa
\lim_{t\to\infty}\Big[\sup_{x\in[0,\infty)}
\Big|u(t,x)-U_{KPP}\Big(x-c_{u} t+{\frac{3}{c_u}}\ln t+\omega(t)\Big)\Big|+\sup_{x\in[0,\infty)}|v(t,x)|\Big]=0,
\eeaa
where $\omega$ is a bounded function defined on $[0,\infty)$.
\end{proposition}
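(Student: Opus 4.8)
The plan is to combine two already-available ingredients: the coarse spreading picture for the case $c_u>c_v$ furnished by Proposition~\ref{prop: spreading speed}, and the sharp front description of the leading species obtained in \cite{Peng Wu Zhou}. First I would apply Proposition~\ref{prop: spreading speed}(1): the data in \eqref{initial data2} have $u_0$ compactly supported (hence supported in a left half-line) and $v_0$ compactly supported, and $c_u>c_v$ by assumption, so that proposition yields, as $t\to\infty$,
\[
\sup_{x\ge 0}v(t,x)\to 0,\qquad \sup_{x\ge c_1 t}u(t,x)\to 0\ \ (c_1>c_u),\qquad \sup_{0\le x\le c_2 t}|1-u(t,x)|\to 0\ \ (0<c_2<c_u).
\]
The first limit already disposes of the $v$-term in the claimed statement and shows that the only nontrivial dynamics of $u$ on $[0,\infty)$ concentrates near the moving point $x\approx c_u t$.

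Next, I would fix $\varepsilon\in(0,c_u-c_v)$ and analyse the leading region $\{x\ge(c_v+\varepsilon)t\}$. On this region $v$ is asymptotically negligible: it tends to $0$ there by the first limit above, and in fact one obtains an exponential rate via a standard supersolution of the form $e^{-\mu(x-(c_v+\varepsilon/2)t)}$ for suitable $\mu>0$, since $c_v+\varepsilon/2>c_v=2\sqrt{rd}$. Consequently $u$ is trapped, on this region, between a subsolution of the scalar Fisher-KPP equation $u_t=u_{xx}+u(1-u)$ (because $-auv\le 0$) and a supersolution of $u_t=u_{xx}+u\big(1-u-a\phi(t)\big)$ with $\phi(t)\to 0$; both carry the Bramson logarithmic drift $\tfrac32\ln t=\tfrac{3}{c_u}\ln t$. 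This is precisely the configuration treated by Corollary~4.6 of \cite{Peng Wu Zhou}, with $u$ in the role of the fastest species, which therefore provides a bounded function $\omega:[0,\infty)\to\mathbb{R}$ with
\[
\lim_{t\to\infty}\ \sup_{x\ge(c_v+\varepsilon)t}\Big|u(t,x)-U_{KPP}\Big(x-c_u t+\frac{3}{c_u}\ln t+\omega(t)\Big)\Big|=0.
\]

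It then remains to patch this onto the complementary strip $\{0\le x\le(c_v+\varepsilon)t\}$. There $\sup|1-u(t,x)|\to 0$ by the third limit of the first step (using $c_v+\varepsilon<c_u$), while simultaneously the shifted argument satisfies $x-c_u t+\tfrac{3}{c_u}\ln t+\omega(t)\le(c_v+\varepsilon-c_u)t+\tfrac{3}{c_u}\ln t+\|\omega\|_\infty\to-\infty$ uniformly, so $U_{KPP}\big(x-c_u t+\tfrac{3}{c_u}\ln t+\omega(t)\big)\to 1$ uniformly there as well; hence $|u-U_{KPP}(\cdots)|\to 0$ uniformly on the strip too. Combining the two regions gives the desired uniform limit over $[0,\infty)$, and the $v$-part is the first limit of the first step. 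I expect the only genuine obstacle to lie in the middle step: confirming that the hypotheses of Corollary~4.6 in \cite{Peng Wu Zhou}, which was established for the strong-bistable competition, are really met here once the exponential decay of $v$ on the leading edge has been secured, and that the vanishing competitive perturbation $-auv$ does not alter the $\tfrac{3}{c_u}\ln t$ correction; the remaining steps are routine comparison and matching arguments.
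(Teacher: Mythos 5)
Your proposal follows essentially the same route as the paper: the paper also obtains this proposition by combining Proposition~\ref{prop: spreading speed}(1) (which kills $v$ on $[0,\infty)$ and gives $u\to 1$ behind the front) with Corollary~4.6 of \cite{Peng Wu Zhou} applied to $u$ as the sole surviving, fastest species on the leading region $\{x\geq (c_v+\varepsilon)t\}$. Your additional details on the exponential suppression of $v$ ahead of its own front and on patching the two regions via $U_{KPP}(\cdot)\to 1$ on the intermediate strip are exactly the routine steps the paper leaves implicit when it says the conclusion follows "immediately".
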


For $c_u<c_v$, we shall establish the following result.

\begin{theorem}\label{th2: profile}
Assume that {\bf(H1)}-{\bf(H2)} hold.
Let $f$ be the auxiliary function defined in \eqref{f}.
If $c_v\in [f(c^*),\infty)$, then the solution $(u,v)$ of system (\ref{system}) with initial data (\ref{initial data2}) satisfies
\beaa
&&\lim_{t\to\infty}\left[\sup_{x\in[c_0t,\infty)}\Big|v(t,x)-V_{KPP}\Big(x-c_{v} t+\frac{3d}{c_v}\ln t+\omega(t)\Big)\Big|+\sup_{x\in[c_0t,\infty)}|u(t,x)|\right]=0
\eeaa
and
\beaa
&&\lim_{t\to\infty}\left[\sup_{x\in[0,c_0t)}
\Big|u(t,x)-U(x-c^* t-\hat{h})\Big|+\sup_{x\in[0,c_0t)}\Big|v(t,x)-V(x-c^* t-\hat{h})\Big|
\right]=0,
\eeaa
where $c_0\in(c^*, c^*+\varepsilon)$ and $\hat{h}\in\mathbb{R}$ are some constants,
$\omega(\cdot)$ is a bounded function defined on $[0,\infty)$, and $(c^*,U,V)$ is the minimal traveling wave defined as \eqref{tw solution}.
\end{theorem}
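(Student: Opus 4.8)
The plan is to combine the spreading dichotomy of Proposition~\ref{prop: spreading speed}(2) with the sharp front-convergence machinery already developed for the scalar Fisher--KPP equation and for the strong-weak traveling wave in \cite{Peng Wu Zhou}. The statement splits naturally into the outer region $x \ge c_0 t$, where $u$ is being crushed and $v$ solves an equation that is asymptotically scalar Fisher--KPP, and the inner region $0 \le x < c_0 t$, where $v \approx 1$ and $u$ relaxes to the minimal pushed traveling wave $(c^*, U, V)$. First I would fix $c_0 \in (c^*, c^*+\varepsilon)$ with $\varepsilon$ so small that $c_0 < \mathscr{C}$ is false and $c_0$ lies strictly between $c^*$ and all the threshold speeds appearing in Proposition~\ref{prop: spreading speed}; since $\mathscr{C} = c^*$ under the hypothesis $c_v \in [f(c^*),\infty)$, this means $c_0$ sits just to the right of the slow front.

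For the outer region, the key point is that by Proposition~\ref{prop: spreading speed}(2) we have $u(t,x) \to 0$ uniformly on $\{x \ge c_0 t\}$, so on this moving half-line the $v$-equation $\partial_t v = d v_{xx} + rv(1 - v - bu)$ is an exponentially small perturbation of the scalar equation $\partial_t v = d v_{xx} + rv(1-v)$, whose spreading front is $V_{KPP}(x - c_v t)$ with the Bramson logarithmic correction $\tfrac{3d}{c_v}\ln t$. I would build sub- and super-solutions of the $v$-equation by taking the known scalar Fisher--KPP front with the logarithmic shift and perturbing the shift by an $o(1)$ amount to absorb the $bu$ term; this is exactly the strategy of \cite[Corollary~4.6]{Peng Wu Zhou} transported to the slower-but-here-faster species $v$, and it yields the bounded function $\omega(t)$ and the convergence of $v$ to $V_{KPP}(x - c_v t + \tfrac{3d}{c_v}\ln t + \omega(t))$ together with $u \to 0$ on $\{x \ge c_0 t\}$. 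The $d$ and $r$ simply rescale the scalar result: the relevant front speed is $c_v = 2\sqrt{rd}$ and the log coefficient becomes $3d/c_v$.

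For the inner region $0 \le x < c_0 t$, Proposition~\ref{prop: spreading speed}(2) gives $v \to 1$ and $u \to 1$ behind the slow front (on $\{0 \le x \le c_4 t\}$ with $c_4 < \mathscr{C} = c^*$), but to get the sharp profile one must track $u$ right up to the leading edge moving at speed $c^*$. Here I would use the pushed nature of $c^*$ (hypothesis {\bf(H2)}): because the front is pushed, the scalar-type logarithmic drift is absent and one expects convergence to a \emph{fixed} translate $U(x - c^*t - \hat h)$, $V(x - c^* t - \hat h)$ rather than a time-dependent shift. The mechanism is a squeezing argument: construct super- and sub-solutions of the full system \eqref{system} of the form $(U,V)(x - c^* t - h_\pm(t))$ with $h_\pm(t)$ converging to finite limits, using the exponential stability of the pushed wave $(c^*,U,V)$ with respect to perturbations that decay like $e^{-\lambda(c^*) x}$ at $+\infty$ --- this stability, or a comparable steepness/monotonicity estimate, is the backbone of the argument in \cite{Peng Wu Zhou} for the bistable-competition case and adapts here. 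One seeds the sub-/super-solutions at a large time $t_1$ using the already-established rough bounds (Proposition~\ref{prop: spreading speed}(2)) to sandwich $(u,v)$ between two shifted waves at a single moment, then lets the comparison principle and the contraction of $h_+(t) - h_-(t)$ do the rest; matching the inner expansion to the outer one at $x = c_0 t$ forces consistency of the constants.

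The main obstacle I expect is \emph{the inner region near the transition $x \approx c_0 t$}: there $v$ is neither close to $0$ nor close to $1$, and $u$ is transitioning from $\approx 1$ to exponentially small, so neither the scalar Fisher--KPP approximation nor the traveling-wave squeezing is valid on its own. One must glue the two constructions across this zone, typically by choosing the super-/sub-solution in the inner region to already incorporate the correct exponential tail $e^{-\lambda(c^*)(x - c^* t)}$ of the pushed wave and verifying that this tail dominates the contribution coming from the outer $v$-front (this is precisely where $c_v \ge f(c^*)$ enters: it guarantees the outer front does not ``pull'' the inner one, i.e.\ the naive exponential rate $\Lambda(c^*, c_v)$ of the leading edge is $\ge \lambda(c^*)$, so no acceleration occurs and the slow-front speed is genuinely $c^*$). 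Making the glued function an honest super- or sub-solution of the coupled system at the interface --- controlling the cross terms $-auv$ and $-rbuv$ and the jump in the ansatz --- is the delicate computation; once that is in place, the convergence in both regions follows from the comparison principle and the stability of the pushed wave, and the two ``$\omega$'' and ``$\hat h$'' constants are read off from the limits of the respective shift functions.
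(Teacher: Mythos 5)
Your architecture matches the paper's: outer region handled by treating the $v$-equation as a perturbation of scalar Fisher--KPP with the Bramson shift (the paper simply invokes the argument of \cite[Section 4.1]{Peng Wu Zhou} adapted from \cite{log delay 3}), inner region handled by squeezing between shifted copies of the pushed wave $(c^*,U,V)$ via the comparison principle. However, there is a genuine gap at the step you yourself flag as ``the delicate computation'': you never resolve how the comparison survives at the moving interface $x=c_0t$, and the rough bounds you propose to seed it with are not strong enough. Proposition~\ref{prop: spreading speed}(2) only gives $u\to 0$ and $v\to 1$ in the intermediate range as $o(1)$; but the super-solution's perturbation terms are of size $p_1e^{-\mu_1 t}$, $q_1e^{-\mu_1 t}$, so to keep $\bar u(t,c_0t)\ge u(t,c_0t)$ and $\underline v(t,c_0t)\le v(t,c_0t)$ for all large $t$ one needs \emph{exponential-in-time} rates for $\sup_{x\ge c_0t}u$ and for $1-v$ on $\{c_1t\le x\le c_2t\}$. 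This is exactly the content of the paper's Lemma~\ref{lm: decay estimamte u and v}, whose part (ii) is a substantive argument in its own right (a Green-function estimate on expanding space-time boxes following \cite{Kaneko Matsuzawa}); once it is available, the paper runs the comparison on the truncated domain $\{t\ge T,\ 0\le x\le c_0t\}$ under the explicit compatibility condition $\mu_1+\alpha_1(c_0-c^*)<\nu_1$, which is the precise form of your ``tail domination'' requirement. Without some version of this lemma your plan stalls.

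A second, smaller omission: for the one-sided bound from below on $u$ (and above on $v$) the paper does not glue at all. It replaces $v_0$ by $\tilde v_0\ge\delta>0$, notes that by the comparison principle this only decreases $u$ and increases $v$, and then applies the Scenario-\eqref{initial data} sub-solution (Lemma~\ref{lm: sub sol boundary condition no ac}) to the modified solution on the whole half-line --- so only the super-solution side ever sees the interface. Your symmetric treatment of both bounds at $x=c_0t$ would double the work and, for the sub-solution side, it is unclear how you would control $\bar v$ versus $v$ near $x=c_0t$ where $v$ is transitioning from $1$ down to the outer KPP front. Finally, note a slip: behind the slow front ($x\le c_4t$, $c_4<\mathscr{C}$) Proposition~\ref{prop: spreading speed}(2) gives $u\to1$ and $v\to0$, not ``$v\to1$ and $u\to1$''; the region where $v\to1$ is the intermediate one between the two fronts, and that is precisely where Lemma~\ref{lm: decay estimamte u and v}(ii) is needed.
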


\begin{remark}
Theorem~\ref{th2: profile} shows the sharp estimate on the long-time behavior of the solution reported in statement (2) of
Proposition~\ref{prop: spreading speed}  when the "nonlocal pulling" phenomenon does not occur, i.e., $\mathscr{C}=c^*$.
The case that $\mathscr{C}=c_{**}$ is more challenging, and will be discussed in our future work.
\end{remark}

Next, we recall some useful known results and establish some asymptotic estimates of the traveling wave for later use.

\subsection{Preliminaries}

\subsubsection{Comparison principle}
For the reader's convenience, we first recall the definitions of super-solution and sub-solution, and the comparison principle. Readers also can see section 2.1 of \cite{Girardin Lam} to find more details.
Define the operators as follow:
\begin{equation*}
N_1[u,v](t,x):=u_t-u_{xx}-F(u,v),\quad N_2[u,v](t,x):=v_t-dv_{xx}-G(u,v),
\end{equation*}
where
\bea\label{FG}
F(u,v):=u(1-u-av),\quad G(u,v):=rv(1-v-bu)
\eea
We say that $(\bar{u},\underline{v})\in[C(\bar{\Omega})\cap C^{2,1}(\Omega)]^2$ is a pair of super-solution (sub-solution) of system (\ref{system}) in
$$\Omega:=(t_1,t_2)\times(x_1,x_2),\ 0\le t_1<t_2\le \infty,\ -\infty\le x_1<x_2\le+\infty$$
if $(\bar{u},\underline{v})$ satisfies $N_1[\bar{u},\underline{v}]\ge 0$ and $N_2[\bar{u},\underline{v}]\le 0$ ($N_1[\bar{u},\underline{v}]\le 0$ and $N_2[\bar{u},\underline{v}]\ge 0$) in $\Omega$.

\begin{proposition}(Comparison Principle)\label{prop: cp}
Let $(\bar{u},\underline{v})$ and $(\underline{u},\bar{v})$ be a super-solution and sub-solution of system (\ref{system}) in $\Omega$, respectively. If $(\bar{u},\underline{v})$ and $(\underline{u},\bar{v})$ satisfy
\begin{equation}\label{cp boundary condition}
\left\{
\begin{aligned}
&\bar{u}(t_1,x)\ge \underline{u}(t_1,x),\ \underline{v}(t_1,x)\le\bar{v}(t_1,x)\quad\mbox{for all}\quad x\in(x_1,x_2),\\
&\bar{u}(t,x_i)\ge \underline{u}(t,x_i),\ \underline{v}(t,x_i)\le\bar{v}(t,x_i)\quad\mbox{for all}\quad t\in(t_1,t_2)\  \ \mbox{and}\ \ i=1,2,
\end{aligned}
\right.
\end{equation}
then it holds $\bar{u}\ge\underline{u}$ and $\underline{v}\le\bar{v}$ in $\Omega$. If $x_1=-\infty$ or $x_2=+\infty$, the corresponding boundary condition can be omitted.
\end{proposition}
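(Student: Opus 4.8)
The plan is to reduce the coupled comparison to the scalar weak parabolic maximum principle applied to a two‑component \emph{cooperative} linear system, exploiting the fact that the competitive nonlinearity $(F,G)$ of \eqref{FG} is quasi-monotone with respect to the order $(u_1,v_1)\preceq(u_2,v_2)\iff u_1\le u_2,\ v_1\ge v_2$, which is precisely the order built into the definitions of super- and sub-solution stated above. Concretely, set $p:=\bar u-\underline u$ and $q:=\bar v-\underline v$; the goal is to show $p\ge 0$ and $q\ge 0$ in $\Omega$. Subtracting the two defining differential inequalities $N_1[\bar u,\underline v]\ge 0\ge N_1[\underline u,\bar v]$ and $N_2[\underline u,\bar v]\ge 0\ge N_2[\bar u,\underline v]$, and using that $F,G$ are polynomials, hence $C^1$ and locally Lipschitz, the mean value theorem gives at every point of $\Omega$
\begin{equation*}
p_t-p_{xx}\ \ge\ \alpha_{11}\,p+\alpha_{12}\,q,\qquad q_t-d\,q_{xx}\ \ge\ \alpha_{21}\,p+\alpha_{22}\,q,
\end{equation*}
where $\alpha_{11}=F_u(\xi_1,\underline v)$ and $\alpha_{22}=G_v(\underline u,\xi_3)$ are bounded on any region on which $\bar u,\underline u,\bar v,\underline v$ are bounded, while the off-diagonal coefficients are $\alpha_{12}=-F_v(\underline u,\xi_2)=a\,\underline u$ and $\alpha_{21}=-G_u(\xi_4,\underline v)=rb\,\underline v$, which are \emph{nonnegative} (here one uses that $\underline u,\underline v\ge 0$, which holds in every application of the proposition and is consistent with all solutions taking values in $[0,1]$). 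This nonnegativity of the off-diagonal terms is the cooperative structure that drives the argument.

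Next I would carry out the standard maximum-principle argument, first on a bounded rectangle $\Omega=(t_1,t_2)\times(x_1,x_2)$ with $\overline\Omega$ compact, so that $\bar u,\underline u,\bar v,\underline v$ and hence $\alpha_{11},\alpha_{22}$ are automatically bounded. Choose $\Lambda$ larger than $\sup_\Omega(\alpha_{11}+\alpha_{12})$ and $\sup_\Omega(\alpha_{22}+\alpha_{21})$, and set $P:=e^{-\Lambda t}p$, $Q:=e^{-\Lambda t}q$; these satisfy inequalities of the same form with the diagonal coefficients replaced by $\alpha_{ii}-\Lambda$, and $P,Q\ge 0$ on the parabolic boundary by \eqref{cp boundary condition}. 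Suppose $\min_{\overline\Omega}\min\{P,Q\}=-\delta<0$; this value is attained at some $(t_0,x_0)$ lying in the interior of $\Omega$ or on the final time slice $\{t_2\}\times(x_1,x_2)$ (never on the parabolic boundary, where $P,Q\ge0$). Say $P(t_0,x_0)=-\delta$ (the $Q$-case is symmetric). Then $P_x(t_0,x_0)=0$, $P_{xx}(t_0,x_0)\ge 0$, $P_t(t_0,x_0)\le 0$, and $Q(t_0,x_0)\ge -\delta$, so the $P$-inequality yields
\begin{equation*}
0\ \ge\ P_t-P_{xx}\ \ge\ (\Lambda-\alpha_{11})\,\delta+\alpha_{12}\,Q(t_0,x_0)\ \ge\ \delta\big(\Lambda-\alpha_{11}-\alpha_{12}\big)\ >\ 0,
\end{equation*}
a contradiction. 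Hence $p,q\ge 0$ on every bounded rectangle.

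Finally I would remove the restriction to bounded domains. If $t_2=\infty$, the conclusion on $(t_1,t_2)\times(x_1,x_2)$ follows by applying the bounded case on $(t_1,T)\times(x_1,x_2)$ for each finite $T$ and letting $T\uparrow\infty$. If $x_1=-\infty$ and/or $x_2=+\infty$, I would use boundedness of $\bar u,\underline u,\bar v,\underline v$ (which holds in every application, the functions being $[0,1]$-valued) and introduce the barrier $\phi_\varepsilon(t,x):=\varepsilon\,e^{\beta t}\cosh(\gamma x)$ with $\beta,\gamma>0$ chosen large enough, relative to the two diffusion constants $1$ and $d$ and to the coefficients $\alpha_{ij}$, that $\phi_\varepsilon$ is a strict super-solution of both linear inequalities; then $p+\phi_\varepsilon$ and $q+\phi_\varepsilon$ are strictly positive for $|x|$ large (uniformly on finite time intervals, since $p,q$ are bounded and $\cosh$ grows) and are $\ge 0$ on whatever finite portion of the parabolic boundary remains, so the bounded-rectangle result on $(t_1,T)\times(-R,R)$ gives $p+\phi_\varepsilon\ge 0$ and $q+\phi_\varepsilon\ge 0$ there; letting $R\uparrow\infty$, then $\varepsilon\downarrow 0$, and finally $T\uparrow\infty$ if $t_2=\infty$, concludes the proof.

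The only place where genuine care is required is the passage to spatially unbounded domains: one must have (or impose) boundedness of the four comparison functions, both to linearize $F$ and $G$ and to dominate the resulting zeroth-order terms by the exponential barrier, and one must tune $\beta$ and $\gamma$ in $\phi_\varepsilon$ compatibly with the distinct diffusivities $1$ and $d$. Everything else is the textbook weak maximum principle for a cooperative parabolic system, and one may alternatively phrase the whole argument by first performing the change of variable $\tilde v:=1-v$, which turns \eqref{system} into an honestly cooperative system and aligns the above order with the componentwise one.
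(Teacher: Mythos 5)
Your argument is correct, but note that the paper does not actually prove this proposition: it is quoted as a known result, and the reader is referred to Section~2.1 of Girardin--Lam for the details. What you supply is the standard self-contained proof: use the quasi-monotone (competitive) structure of $(F,G)$ to turn the differences $p=\bar u-\underline u$, $q=\bar v-\underline v$ into a cooperative linear parabolic system via the mean value theorem, run the weak maximum principle on bounded rectangles after the $e^{-\Lambda t}$ normalization, and treat spatially unbounded domains with the Phragm\'en--Lindel\"of barrier $\varepsilon e^{\beta t}\cosh(\gamma x)$, using boundedness of the four comparison functions. This buys a complete verification where the paper has only a citation, and your reduction is exactly the mechanism behind the cited result (equivalently, your closing remark about the substitution $\tilde v=1-v$). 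The one point to state honestly is the one you yourself flag: the nonnegativity needed to make the off-diagonal coefficients ($a\underline u$ and $rb\underline v$ in your splitting, or $a\bar u$ and $rb\bar v$ if one splits the other way) nonnegative is not among the hypotheses as literally written; the proposition is only valid on the range where $F$ is nonincreasing in $v$ and $G$ in $u$, and this is implicitly assumed both in the paper and in the reference, being satisfied (up to the obvious truncations) by all sub- and super-solutions used later. Two small technical remarks: at a minimum located on the final time slice one should either work on $(t_1,T)$ with $T<t_2$ and pass to the limit, or use a one-sided time derivative, since the functions are only $C^{2,1}$ in the open set; and in the barrier one fixes $\gamma$ and takes $\beta$ large (enlarging $\gamma$ degrades the margins $\beta-\gamma^2$ and $\beta-d\gamma^2$), which your final sentence already acknowledges.
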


\begin{remark}\label{weak supersol}
If both $(\bar{u}_1,\underline{v})$ and $(\bar{u}_2,\underline{v})$ are super-solutions, then
Proposition~\ref{cp boundary condition} still holds if $(\bar{u},\underline{v})$ is replaced by
$(\min\{\bar{u}_1,\bar{u}_2\},\underline{v})$. If  $(\bar{u},\underline{v}_1)$ and $(\bar{u},\underline{v}_2)$ are super-solutions, then
Proposition~\ref{cp boundary condition} still holds if $(\bar{u},\underline{v})$ is replaced by $(\bar{u},\max\{\underline{v}_1, \underline{v}_2\})$. More details can be found in \cite[Section 2]{Girardin Lam}.
\end{remark}

\subsubsection{Asymptotic behavior of the minimal traveling wave near $\pm\infty$}\label{subsec:Asymp behavior}

The asymptotic behavior of traveling waves $(c,U,V)$ near $\pm\infty$ for any $c\geq c^*$ has been reported
in \cite{MoritaTachibana2009} (see also \cite{Girardin Lam}).
In this subsection, we only recall those for $c=c^*>2\sqrt{1-a}$.


Let $(c,U,V)$ be a solution of system (\ref{tw solution}). To describe the asymptotic behavior of $(U,V)$ near $+\infty$,
we define
\beaa
&&\lambda_u^{\pm}(c):=\frac{-c\pm\sqrt{c^2-4(1-a)}}{2}<0,\\ &&\lambda_v^{-}(c):=\frac{-c-\sqrt{c^2+4rd}}{2d}<0<\lambda_v^{+}(c):=\frac{-c+\sqrt{c^2+4rd}}{2d}.
\eeaa

\begin{lemma}[\cite{MoritaTachibana2009}]\label{lm: behavior around + infty}
Let $(c^*,U,V)$ be the minimal traveling wave of system (\ref{tw solution}) with $c^*$ satisfying {\bf(H2)}
.
Then there exist positive constants $l_i(i=1,2,3,4)$ such that the following hold:
\beaa
&&\lim_{\xi\rightarrow+\infty}\frac{U(\xi)}{e^{\lambda^-_u(c^*)\xi}}=l_1,\\
&&\lim_{\xi\rightarrow+\infty}\frac{1-V(\xi)}{e^{\lambda_u^-(c^*)\xi}}=l_2\quad \mbox{if $\lambda_v^{-}(c^*)<\lambda_u^-(c^*)$},\\
&&\lim_{\xi\rightarrow+\infty}\frac{1-V(\xi)}{\xi e^{\lambda_v^{-}(c^*)\xi}}=l_3\quad
\mbox{if $\lambda_v^{-}(c^*)=\lambda_u^-(c^*)$},\\
&&\lim_{\xi\rightarrow+\infty}\frac{1-V(\xi)}{e^{\lambda_v^{-}(c^*)\xi}}=l_4\quad
\mbox{if $\lambda_v^{-}(c^*)>\lambda_u^-(c^*)$}.
\eeaa
\end{lemma}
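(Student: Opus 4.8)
The plan is to analyze the traveling‑wave system near $\xi=+\infty$ as a perturbation of a constant‑coefficient linear problem and to read the asymptotics off the exponents $\lambda_u^{\pm}(c^*)$, $\lambda_v^{\pm}(c^*)$. Set $W:=1-V$, so that $(U,W)(\xi)\to(0,0)$ as $\xi\to+\infty$ with $U>0$, $U'<0$, $W>0$, $W'<0$, and $(U,W)$ solves
\begin{align*}
U''+c^*U'+(1-a)U&=U^2-aUW,\\
dW''+c^*W'-rW&=-rbU-rW^2+rbUW.
\end{align*}
Rewriting $(U,U',W,W')$ as a first‑order system, the linearization at the origin has the simple negative exponents $\lambda_u^{-}(c^*)<\lambda_u^{+}(c^*)<0$ and $\lambda_v^{-}(c^*)<0$ together with $\lambda_v^{+}(c^*)>0$, so the wave, which tends to the origin as $\xi\to+\infty$, lies on the three‑dimensional stable manifold. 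Classical asymptotic‑integration theory for linear systems with asymptotically constant coefficients (Coddington--Levinson, Hartman) then tells us that, once we know which eigendirections are active, $U$ and $W$ each have exact leading behavior of the form $(\text{const})\,\xi^{k}e^{\mu\xi}$ with $\mu$ one of these exponents; the right‑hand sides above are quadratic in the small quantities $U,W$, hence decay strictly faster than the linear parts and do not affect the leading order.

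First I would determine the behavior of $U$. Since $W$ enters the $U$‑equation only through the quadratic term $-aUW$, the logarithmic derivative $\rho:=U'/U$ obeys the Riccati equation
\[
\rho'=-\bigl(\rho-\lambda_u^{-}(c^*)\bigr)\bigl(\rho-\lambda_u^{+}(c^*)\bigr)+U-aW,
\]
which is asymptotically autonomous, its limiting flow (as $\xi\to+\infty$) having $\lambda_u^{+}(c^*)$ as an attractor and $\lambda_u^{-}(c^*)$ as a repeller. Thus a ``generic'' profile would decay at the slow rate $\lambda_u^{+}(c^*)$, and the assertion $U(\xi)/e^{\lambda_u^{-}(c^*)\xi}\to l_1>0$ amounts to saying that the minimal‑speed wave is the exceptional, strongly decaying trajectory. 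This is the one genuinely front‑theoretic input, and it is exactly where the pushed‑front hypothesis {\bf(H2)} is used (being the pushed condition it also gives $(c^*)^2>4(1-a)$, so that $\lambda_u^{-}(c^*)$ and $\lambda_u^{+}(c^*)$ are real and distinct); I would either quote it from \cite{MoritaTachibana2009} or establish it by the usual argument that a slowly decaying minimal‑speed wave would persist for speeds slightly below $c^*$, contradicting minimality. Once $\rho(\xi)\to\lambda_u^{-}(c^*)$ is known, positivity of $U$ and asymptotic integration upgrade it to $U(\xi)=l_1 e^{\lambda_u^{-}(c^*)\xi}(1+o(1))$ with $l_1>0$.

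With the asymptotics of $U$ at hand, the $W$‑equation becomes a constant‑coefficient linear ODE with explicit forcing $-rbU=-rbl_1 e^{\lambda_u^{-}(c^*)\xi}(1+o(1))$ plus strictly faster‑decaying remainders. Its decaying homogeneous solution is $e^{\lambda_v^{-}(c^*)\xi}$ (the $e^{\lambda_v^{+}(c^*)\xi}$ mode is absent since $W\to0$), while a particular solution is found by matching the exponent $\lambda_u^{-}(c^*)$; comparing the two rates gives precisely the three cases of the statement. Writing $P(x):=dx^2+c^*x-r$, whose roots are $\lambda_v^{\pm}(c^*)$: if $\lambda_v^{-}(c^*)<\lambda_u^{-}(c^*)$ the particular term $\frac{-rbl_1}{P(\lambda_u^{-}(c^*))}e^{\lambda_u^{-}(c^*)\xi}$ dominates, and since $\lambda_u^{-}(c^*)$ then lies strictly between $\lambda_v^{-}(c^*)$ and $\lambda_v^{+}(c^*)$ we have $P(\lambda_u^{-}(c^*))<0$, so $l_2>0$; if $\lambda_v^{-}(c^*)=\lambda_u^{-}(c^*)$ the forcing is resonant, the particular solution acquires a factor $\xi$, and a short computation yields $l_3=\frac{rbl_1}{\sqrt{(c^*)^2+4rd}}>0$; if $\lambda_v^{-}(c^*)>\lambda_u^{-}(c^*)$ the homogeneous mode $\gamma e^{\lambda_v^{-}(c^*)\xi}$ dominates and one only needs $\gamma\neq0$ --- here $\lambda_u^{-}(c^*)$ lies to the left of both roots of $P$, so $P(\lambda_u^{-}(c^*))>0$ and the particular term has a \emph{negative} coefficient, whence $\gamma=0$ would force $W<0$ for large $\xi$, contradicting $W=1-V>0$; therefore $l_4=\gamma>0$.

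The main obstacle is the $U$‑step: proving that the minimal‑speed wave decays at the fast rate $\lambda_u^{-}(c^*)$ rather than the slow rate $\lambda_u^{+}(c^*)$ at $+\infty$, which is precisely what makes the pushed‑front assumption {\bf(H2)} indispensable. Everything downstream is classical linear‑ODE asymptotic integration; the only delicate points are the resonant case $\lambda_u^{-}(c^*)=\lambda_v^{-}(c^*)$ (handled by the polynomial correction) and the sign bookkeeping ensuring that each $l_i$ is strictly positive. If one simply cites \cite{MoritaTachibana2009} for the behavior of $U$, the lemma as recalled here reduces to the elementary linear‑ODE analysis of the $W$‑equation sketched above.
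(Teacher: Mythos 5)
The paper does not actually prove this lemma: it is recalled verbatim from \cite{MoritaTachibana2009} (the bracketed citation in the lemma header is the ``proof''), so there is nothing in the text to compare your argument against line by line. That said, your reconstruction is sound and is essentially the argument one would expect behind the cited result. The reduction $W=1-V$, the identification of the four exponents with the roots of $\lambda^2+c^*\lambda+(1-a)$ and $d\lambda^2+c^*\lambda-r$, and the three-way case analysis for $W$ (particular solution dominates when $\lambda_v^-<\lambda_u^-$, resonant $\xi$-factor when they coincide, homogeneous mode dominates when $\lambda_v^->\lambda_u^-$) are all correct, and your sign bookkeeping checks out: $P(\lambda_u^-)<0$ in the first case since $\lambda_v^-<\lambda_u^-<0<\lambda_v^+$, the resonant coefficient is $rbl_1/\sqrt{(c^*)^2+4rd}>0$, and in the third case the positivity of $W$ forces $\gamma>0$ exactly as you say. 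The one genuinely nontrivial input is, as you correctly isolate, that under {\bf(H2)} (which should be read as $c^*>2\sqrt{1-a}$; the displayed ``$\sqrt{1-a}$'' is a typo, as Section 1.2.2 confirms) the minimal wave selects the \emph{fast} decay rate $\lambda_u^-(c^*)$ rather than the generic slow rate $\lambda_u^+(c^*)$. Be aware that your proposed shortcut --- ``a slowly decaying minimal-speed wave would persist for speeds slightly below $c^*$'' --- is itself a nontrivial continuation/monotone-iteration argument for systems and is precisely the content of the reference; if you do not quote it, that step needs a full proof, not a one-line appeal to minimality. With that step quoted, the rest of your argument (asymptotic integration plus the observation that the quadratic terms are strictly higher order) is complete.
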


To describe the asymptotic behavior of $(c^*,U,V)$ near $-\infty$, we define
\beaa
&&\mu_{u}^{-}(c):=\frac{-c-\sqrt{c^2+4}}{2}<0<\mu_{u}^{+}(c):=\frac{-c+\sqrt{c^2+4}}{2},\\
&&\mu_v^{-}(c):=\frac{-c-\sqrt{c^2+4rd(b-1)}}{2d}<0<\mu_v^{+}(c):=\frac{-c+\sqrt{c^2+4rd(b-1)}}{2d}.
\eeaa
\begin{lemma}[\cite{MoritaTachibana2009}]\label{lm: behavior around - infty}
Let $(c^*,U,V)$ be the minimal traveling wave of system (\ref{tw solution}). Then
there exist positive constants $l_i(i=5,6,7,8)$ such that
\beaa
&& \lim_{\xi\rightarrow-\infty}\frac{V(\xi)}{e^{\mu_v^+(c^*)\xi}}=l_5,\\
&&\lim_{\xi\rightarrow-\infty}\frac{1-U(\xi)}{e^{\mu_v^+(c^*)\xi}}=l_{6}\quad \mbox{if $\mu_{u}^{+}(c^*)>\mu_v^+(c^*)$},\\
&&\lim_{\xi\rightarrow-\infty}\frac{1-U(\xi)}{|\xi|e^{\mu_v^+(c^*)\xi}}=l_{7}\quad
\mbox{if $\mu_{u}^{+}(c^*)=\mu_v^+(c^*)$},\\
&&\lim_{\xi\rightarrow-\infty}\frac{1-U(\xi)}{e^{\mu_u^+(c^*)\xi}}=l_{8}\quad \mbox{if $\mu_{u}^{+}(c^*)<\mu_v^+(c^*)$}.
\eeaa
\end{lemma}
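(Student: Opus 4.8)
The plan is to linearise the profile system \eqref{tw solution} at the rest state $(1,0)$, which $(U,V)$ approaches as $\xi\to-\infty$, read off the asymptotics from the eigenstructure, and then upgrade the formal computation to a genuine expansion by asymptotic integration. Setting $U=1-p$, $V=q$ (so $p,q>0$, $p,q\to0$, $p'=-U'>0$, $q'=V'>0$), the wave equations become
\[
p''+c^*p'-p=-aq+O(p^2+pq),\qquad dq''+c^*q'-r(b-1)q=O(pq+q^2).
\]
The characteristic roots of the homogeneous $q$-equation are precisely $\mu_v^\pm(c^*)$ and those of the homogeneous $p$-equation are precisely $\mu_u^\pm(c^*)$, with $\mu_u^-,\mu_v^-<0<\mu_u^+,\mu_v^+$. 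Regarding the profile as a trajectory of the first-order system in $(p,p',q,q')$, the condition $p,q\to0$ as $\xi\to-\infty$ confines it to the two-dimensional unstable manifold of the origin (there is no centre direction, so the approach is exponential), on which the linear flow carries the two eigenvalues $\mu_u^+(c^*)$ and $\mu_v^+(c^*)$; the standard asymptotic-integration machinery (as carried out in \cite{MoritaTachibana2009}) then guarantees that the quadratic remainders contribute only strictly higher exponential order, so the leading terms below are exact.

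Since the $q$-equation decouples at linear order and $q\to0^+$, the growing mode $e^{\mu_v^-(c^*)\xi}$ is absent and $V(\xi)\sim l_5 e^{\mu_v^+(c^*)\xi}$; a positive solution of an asymptotically-linear scalar equation cannot decay faster than its slowest mode, hence $l_5>0$. For $p=1-U$ the growing homogeneous mode $e^{\mu_u^-(c^*)\xi}$ is likewise absent, and the remaining decaying solution splits as
\[
p(\xi)\sim A\,e^{\mu_u^+(c^*)\xi}+p_{\mathrm{part}}(\xi),
\]
where, when $\mu_u^+(c^*)\ne\mu_v^+(c^*)$, $p_{\mathrm{part}}\sim B\,e^{\mu_v^+(c^*)\xi}$ with $B=\dfrac{-a l_5}{(\mu_v^+)^2+c^*\mu_v^+-1}$, and, when $\mu_u^+(c^*)=\mu_v^+(c^*)$, the resonance produces $p_{\mathrm{part}}\sim B\,\xi\,e^{\mu_v^+(c^*)\xi}$ with $B=\dfrac{-a l_5}{\sqrt{(c^*)^2+4}}$. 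Recalling $\xi<0$, the mode with the smaller exponent dominates, which yields the three cases of the lemma (abbreviating $\mu_u^\pm=\mu_u^\pm(c^*)$, $\mu_v^\pm=\mu_v^\pm(c^*)$): if $\mu_u^+>\mu_v^+$, then $\mu_v^+$ lies strictly between the roots $\mu_u^-<\mu_u^+$, so $B>0$ and $1-U\sim l_6 e^{\mu_v^+(c^*)\xi}$ with $l_6=B$; if $\mu_u^+<\mu_v^+$, the homogeneous mode dominates and $1-U\sim l_8 e^{\mu_u^+(c^*)\xi}$; if $\mu_u^+=\mu_v^+$, the resonant term wins and $1-U\sim l_7|\xi| e^{\mu_v^+(c^*)\xi}$ with $l_7=a l_5/\sqrt{(c^*)^2+4}>0$.

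It remains to check positivity of $l_7,l_8$, for which I would invoke $p=1-U>0$. In the case $\mu_u^+<\mu_v^+$ one has $B=\dfrac{-a l_5}{(\mu_v^+)^2+c^*\mu_v^+-1}<0$ because $\mu_v^+$ lies outside $(\mu_u^-,\mu_u^+)$; hence if $A\le0$ the leading term of $p$ near $-\infty$ (either $A e^{\mu_u^+\xi}$ or $B e^{\mu_v^+\xi}$) would be non-positive, contradicting $p>0$, so $l_8=A>0$. The resonant coefficient has already been exhibited as positive. I would also note that nothing here uses the pushed-front hypothesis \textbf{(H2)}: the argument applies verbatim to any $c\ge c^*$, since near $-\infty$ the eigenvalues $\mu_u^\pm,\mu_v^\pm$ are always real and hyperbolic (the role of \textbf{(H2)} is confined to the $+\infty$ asymptotics of Lemma~\ref{lm: behavior around + infty}).

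\textbf{Main obstacle.} The conceptual picture is elementary; the real work is the rigorous asymptotic integration, i.e.\ proving that the quadratic terms $O(p^2+pq+q^2)$ do not disturb the leading exponential (or $|\xi|e^{\mu\xi}$) behaviour. The cleanest route is a contraction-mapping argument on the integral reformulation of the system restricted to the unstable manifold, and the point that needs genuine care is the resonant case $\mu_u^+(c^*)=\mu_v^+(c^*)$, where the linearisation on the unstable manifold is not diagonalisable and one must propagate the $\xi e^{\mu\xi}$ correction through the nonlinearity while checking that it is not resonantly amplified further.
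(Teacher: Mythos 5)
The paper offers no proof of this lemma: it is quoted verbatim from \cite{MoritaTachibana2009}, so there is nothing internal to compare against. Your sketch is the standard argument behind that reference — linearisation at $(1,0)$, identification of the unstable-manifold modes $\mu_u^+(c^*)$ and $\mu_v^+(c^*)$, and a Perron/Hartman-type asymptotic integration to control the quadratic remainders — and your bookkeeping is correct throughout: using $(\mu)^2+c^*\mu-1=(\mu-\mu_u^+)(\mu-\mu_u^-)$ one checks that the particular-solution coefficient $B$ has exactly the signs you claim in the three cases, the positivity of $l_6,l_7,l_8$ follows as you argue from $1-U>0$ and $V>0$, and you are right that {\bf(H2)} plays no role here since the equilibrium $(1,0)$ is hyperbolic for every $c$. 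The one step you correctly flag as the real work (ruling out that the coefficient of the slow mode vanishes, and handling the resonant case rigorously) is precisely what is carried out in \cite{MoritaTachibana2009}.
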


\subsubsection{Some useful estimates}

In this subsection, we provide 
some estimates for later use.
Note that the assumption {\bf(H2)} is not required in this subsection.

\begin{lemma}\label{lm: simple upper estimates}
Let $(u,v)$ be the solution of system \eqref{system} with initial data satisfying either \eqref{initial data} or \eqref{initial data2}. Then
there exist $M>0$ such that
\begin{equation*}
u(t,x)\le 1+Me^{-t}\ \ \mbox{and}\ \ v(t,x)\le 1+Me^{-rt}\quad\mbox{for all}\quad(t,x)\in[0,\infty)\times\mathbb{R}.
\end{equation*}
\end{lemma}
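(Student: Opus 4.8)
The plan is to prove both estimates by the same elementary device: construct a spatially homogeneous super-solution for each equation and invoke the comparison principle. Since all nonlinearities vanish outside a small neighborhood of the relevant equilibria, the key point is just a scalar ODE comparison.

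First I would treat the $u$-component. Note that $F(u,v) = u(1-u-av) \le u(1-u)$ because $a,v\ge 0$. Let $\bar u(t)$ solve the logistic-type ODE $\bar u' = \bar u(1-\bar u)$ with $\bar u(0) = 1 + \max_{x}u_0(x) =: 1 + \delta_u$, where $\delta_u \ge 0$ since $u_0$ takes values in $[0,1]$; in fact we may simply start from any value $\ge 1$. Then $(\bar u, 0)$ (or more cleanly $(\bar u, \underline v)$ with $\underline v \equiv 0$, which is a genuine sub-solution for the $v$-equation) forms a super/sub pair in the sense of Proposition~\ref{prop: cp} on $\Omega = (0,\infty)\times\mathbb{R}$: indeed $N_1[\bar u, 0] = \bar u' - 0 - \bar u(1-\bar u) = 0 \ge 0$ and $N_2[\bar u, 0] = 0 - 0 - 0 \le 0$, and the initial inequalities $\bar u(0) \ge u_0$, $0 \le v_0$ hold. (Here one uses that $v_0 \ge 0$, which holds in both \eqref{initial data} and \eqref{initial data2}.) By the comparison principle, $u(t,x) \le \bar u(t)$ for all $t\ge 0$, $x\in\mathbb{R}$. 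It remains to bound $\bar u(t)$: solving the Bernoulli ODE explicitly, or more simply observing that $w := \bar u - 1$ satisfies $w' = -w(1+\bar u) \le -w$ when $w\ge 0$ (since $\bar u \ge 1$ forces $1+\bar u \ge 2 \ge 1$, and $\bar u \ge 1$ is preserved), Grönwall gives $0 \le \bar u(t) - 1 \le (\bar u(0)-1)e^{-t}$, so $u(t,x) \le 1 + M_1 e^{-t}$ with $M_1 = \bar u(0) - 1$.

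The $v$-component is entirely analogous. From $G(u,v) = rv(1-v-bu) \le rv(1-v)$ (using $b,u\ge 0$), let $\bar v(t)$ solve $\bar v' = r\bar v(1-\bar v)$ with $\bar v(0) \ge 1$ and $\bar v(0) \ge \max_x v_0(x)$; then $(\underline u, \bar v)$ with $\underline u \equiv 0$ is a sub-solution pair controlling $(u,v)$ from above in the $v$-slot — here one checks $N_1[0,\bar v] = -F(0,\bar v) = 0 \le 0$ and $N_2[0,\bar v] = \bar v' - r\bar v(1-\bar v) = 0 \ge 0$, and $0 \le u_0$, $\bar v(0) \ge v_0$. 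The comparison principle yields $v(t,x) \le \bar v(t)$, and the same Grönwall argument with rate $r$ in place of $1$ gives $0 \le \bar v(t) - 1 \le (\bar v(0)-1)e^{-rt}$, hence $v(t,x) \le 1 + M_2 e^{-rt}$. Taking $M := \max\{M_1, M_2\}$ finishes the proof.

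There is essentially no main obstacle here; the only points requiring a word of care are (i) that the comparison principle (Proposition~\ref{prop: cp}) is stated for coupled pairs, so one must package the trivial component ($0$) correctly on the appropriate side of the competitive ordering — $u$ up pairs with $v$ down, and checking the sign conditions $N_1\ge 0, N_2\le 0$ for the $(\bar u, 0)$ pair and $N_1\le 0, N_2\ge 0$ for the $(0,\bar v)$ pair; and (ii) invoking the comparison principle on the unbounded domain $x_1=-\infty$, $x_2=+\infty$, which the proposition explicitly permits (the lateral boundary conditions may be omitted). The boundedness of the initial data in $[0,1]$ guarantees $\bar u(0), \bar v(0)$ can be chosen finite, and any choice $\ge 1$ works, so $M$ depends only on that choice.
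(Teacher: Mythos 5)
Your proposal is correct and is precisely the argument the paper has in mind: the paper omits the proof with the remark that it follows by ``simple comparison with ODEs,'' and your spatially homogeneous logistic super-solutions $(\bar u,0)$ and $(0,\bar v)$ paired through Proposition~\ref{prop: cp} are exactly that. (Minor algebra note: with $w=\bar u-1$ one gets $w'=-w\bar u$, not $-w(1+\bar u)$, but since $\bar u\ge 1$ the bound $w'\le -w$ and hence the decay rate $e^{-t}$ still follow.)
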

\begin{proof}
Since this can be done by simple comparison with ODEs, the proof is omitted.
\end{proof}

\begin{lemma}\label{lm: estimate of x=0}
Assume that {\bf(H1)} holds.
Let $(u,v)$ be the solution of system \eqref{system} with initial data satisfying either \eqref{initial data} or \eqref{initial data2}. Then
there exists $c_{uv}>0$ such that for any $c\in(0,c_{uv})$,
there exist $C_i>0$, $k_i>0$ $(i=1,2)$ and $T>0$ such that 
\begin{equation*}
u(t,x)\ge 1-C_1e^{-k_1t}\ \ \mbox{and}\ \  v(t,x)\leq C_2e^{-k_2t}\quad\mbox{for all}\quad(t,x)\in[T,\infty)\times[-ct,ct].
\end{equation*}
\end{lemma}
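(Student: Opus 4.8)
The plan is to convert the fact that, under {\bf(H1)}, the kinetic system $\dot u=u(1-u-av)$, $\dot v=rv(1-v-bu)$ has $(1,0)$ as a globally asymptotically stable equilibrium for data in the open positive quadrant into \emph{exponentially fast} convergence of $(u,v)$ to $(1,0)$ on every cone $\{|x|\le ct\}$ of sufficiently small speed. This will be done by a finite bootstrap that alternately improves a spatially uniform lower bound for $u$ and an upper bound for $v$ on a nested family of cones, each improvement coming from the comparison principle (Proposition~\ref{prop: cp}, applied on truncated cone-shaped domains) against solutions of the kinetic ODEs corrected by exponentially small boundary layers.

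\emph{Step 1: a positive lower bound for $u$ on a cone.} By Lemma~\ref{lm: simple upper estimates}, $v(t,x)\le 1+Me^{-rt}$, so for every small $\varepsilon>0$ there is $t_\varepsilon$ such that for $t\ge t_\varepsilon$ the component $u$ is a super-solution of the scalar Fisher-KPP equation $w_t=w_{xx}+w(1-a-\varepsilon-w)$; here $1-a>0$ is used. Since $u_0\not\equiv0$, the strong maximum principle gives $u(t_\varepsilon,\cdot)>0$, hence $u(t_\varepsilon,\cdot)$ dominates a positive bump, and the classical spreading theorem of Aronson and Weinberger~\cite{Aronson Weinberger} for that scalar equation, together with the comparison principle, yields $c_0\in(0,2\sqrt{1-a})$, $\beta_0\in(0,1)$ arbitrarily close to $1-a$, and $T_0>0$ with
\[
u(t,x)\ge\beta_0\qquad\text{for all }t\ge T_0,\ |x|\le c_0 t .
\]
This argument is insensitive to the precise initial data, so it covers both \eqref{initial data} and \eqref{initial data2}.

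\emph{Step 2: the bootstrap.} Suppose $u\ge\beta_n$ on $\{t\ge T,\ |x|\le ct\}$ for a constant $\beta_n>0$. There $v$ satisfies $v_t\le dv_{xx}+rv(1-v-b\beta_n)$, and comparing it on the truncated domain $\{t>T,\ |x|<ct\}$ with the super-solution
\[
\bar v(t,x)=\bar v_n(t)+A\bigl(e^{-\alpha(ct-x)}+e^{-\alpha(ct+x)}\bigr),
\]
where $\bar v_n$ solves $\dot{\bar v}_n=r\bar v_n(1-\bar v_n-b\beta_n)$ with $\bar v_n(T)=1+Me^{-rT}$, gives, on the slightly narrower cone $\{t\ge T',\ |x|\le(c-\delta)t\}$,
\[
v(t,x)\le\gamma_{n+1}+Ce^{-\rho t},\qquad\gamma_{n+1}:=(1-b\beta_n)_+ ,
\]
for some $\rho>0$; in particular $v\le\gamma_{n+1}+\varepsilon$ there for $t$ large. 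Symmetrically, from this bound and the sub-solution $\underline u(t,x)=\underline u_n(t)-A'\bigl(e^{-\alpha'(ct-x)}+e^{-\alpha'(ct+x)}\bigr)$, whose kinetic part solves $\dot{\underline u}_n=\underline u_n(1-\underline u_n-a(\gamma_{n+1}+\varepsilon))$, one gets $u\ge\beta_{n+1}$ on a still narrower cone, with $\beta_{n+1}$ arbitrarily close to $1-a\gamma_{n+1}$. Hence $\gamma_n$ essentially obeys $\gamma_{n+1}=\bigl(ab\,\gamma_n-(b-1)\bigr)_+$ with $\gamma_0=1$ (equivalently $\beta_n=1-a\gamma_n$); since $0<a<1<b$ one checks that this sequence is strictly decreasing and equals $0$ after finitely many, say $N=N(a,b)$, steps. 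Once $\gamma_n=0$ we have $b\beta_n>1$ --- the exponentially small slack in the bootstrap together with $b>1$ promotes the identity $b\beta_n\ge1$ to a strict inequality, which also removes the non-generic case $ab\gamma_{n-1}=b-1$ --- so $v_t\le dv_{xx}-\kappa v$ on the cone with $\kappa>0$, giving $v\le C_2e^{-k_2t}$, and one more application of the $u$-equation yields $u\ge1-C_1e^{-k_1t}$. Each step costs a fixed amount $\delta$ of cone speed and there are finitely many steps, so $c_{uv}$ may be taken to be any positive number slightly below $c_0$ (choose $\delta$ accordingly); since a bound on $\{|x|\le c_{uv}t\}$ restricts to every $\{|x|\le ct\}$ with $0<c<c_{uv}$, the conclusion follows.

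The heart of the matter is the barrier construction in Step~2. The layers $e^{-\alpha(ct\mp x)}$ are forced because the only a priori information on the lateral boundary $x=\pm ct$ is the crude $v\le1+Me^{-rt}$ (resp.\ $u\ge0$), which is far from the kinetic limit the barrier must reach in the bulk; one must check that, for suitable $A,\alpha$ and $T$ large, $\bar v$ (resp.\ $\underline u$) is a genuine super- (resp.\ sub-) solution on the truncated cone --- a computation that becomes routine once one notices that near the stable kinetic state the relevant linearized coefficient has the favourable sign --- and that the layers are $O(e^{-\alpha\delta t})$ on the narrower cone, hence do not spoil the exponential rate. The remaining issues --- tracking the finitely many cone-speed losses, verifying that the rates $\rho,\kappa,k_1,k_2$ stay bounded below by positive constants depending only on $a,b,r,d$, and dealing with the exceptional value $b\beta_n=1$ (where $v$ decays only algebraically in the kinetic approximation, which nevertheless still forces $bu>1$ after one additional round) --- are routine.
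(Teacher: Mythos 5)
Your bootstrap is correct, but it is a genuinely different route from the paper's. The paper does not iterate at all: it perturbs the competition coefficient $a$ upward to some $a^*>1$ (keeping $b^*=b$), so that $(u,v)$ becomes a super/sub-solution pair for an auxiliary \emph{bistable} system; it chooses $a^*$ close to $1$ so that the criterion of Ma--Huang--Ou \cite{Ma Huang Ou} forces the bistable front speed $c_{uv}$ to be positive, uses \cite{Lewis Li Weinberger 1} to arrange that the auxiliary system starts close to $(1,0)$ on a large interval, and then quotes the sharp bistable spreading estimates of \cite{Peng Wu Zhou} to get the exponential rates on sub-speed cones. That argument is a few lines long but leans on three external results. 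Yours is self-contained modulo Aronson--Weinberger and scalar comparison, and the details you defer do check out: the supersolution inequality for $\bar v$ reduces to $r\bigl(2\bar v_n+AE+b\beta_n-1\bigr)\ge \alpha c+d\alpha^2$ (with $E$ the sum of the two layers), whose left side is bounded below in the bulk by $r\,|1-b\beta_n|$ because $\bar v_n$ decreases monotonically to $(1-b\beta_n)_+$, so a small $\alpha$ works whenever $b\beta_n\neq1$, and your extra round disposes of the borderline value; the recursion terminates because for $\gamma_n\in[0,1]$ the decrement $\gamma_n-\gamma_{n+1}=(b-1)-(ab-1)\gamma_n$ is affine in $\gamma_n$ and bounded below by $\min\{b-1,\,b(1-a)\}>0$, so $\gamma_n$ leaves $[0,1]$ in a number of steps depending only on $a,b$; and the analogous subsolution for $u$ in the final step works because $\beta_N$ is by then close to $1$. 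A minor bonus of your approach is that it yields the conclusion for every $c$ up to (arbitrarily close to) $2\sqrt{1-a}$, whereas the paper's admissible range is capped by the unquantified speed of the auxiliary bistable front; since the lemma only asks for some $c_{uv}>0$, either suffices.
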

\begin{proof}
To derive these estimates, we consider the strong-strong (bistable) competition system
\begin{equation}\label{bistable competition system}
\left\{
\begin{aligned}
&\partial_t u^*=u^*_{xx}+u^*(1-u^*-a^*v^*),\ t\geq 0,\ x\in \mathbb{R},\\
&\partial_t v^*=dv^*_{xx}+rv^*(1-v^*-b^*u^*),\ t\geq 0,\ x\in \mathbb{R},\\
&u^*(0,x)=u(x,T_0),\quad v^*(0,x)=v(x,T_0),
\end{aligned}
\right.
\end{equation}
where $a^*>1>a$, $b^*=b>1$ and $T_0>0$ will be determined later.
Since $a^*>a$ and $b^*=b$, we have $N_1[u^*,v^*]\le 0$ and $N_2[u^*,v^*]\ge 0$ for $t\geq 0$. By applying the comparison principle, we have
\bea\label{CP uv and u*v*}
u^*\le u\quad {\rm and}\quad v^*\ge v\quad  \mbox{for all}\quad (t,x)\in [T_0,\infty)\times\mathbb{R}.
\eea

Now we fix the parameters $b^*=b$, $d$ and $r$, and let $a^*>1$ sufficiently close to $1$ such that
\bea\label{Ma-Huang-Ou-cond}
\frac{r+d(a^*-1)}{b^*r}<3-2a^*.
\eea
Due to \cite{Gardner,Kan-On}, there exists a unique $c_{uv}$ such that
system \eqref{tw solution} admits a unique traveling wave solution with $c=c_{uv}$, and $a$ is replaced $a^*$.
Furthermore, in view of \eqref{Ma-Huang-Ou-cond}, Theorem 4.3 in \cite{Ma Huang Ou} implies that $c_{uv}>0$.

On the other hand, by the results of \cite{Lewis Li Weinberger 1}, we can take $T_0>0$ large enough, such that
$(u,v)(T_0,\cdot)$ is close to $(1,0)$ in a sufficiently large interval, and thus the solution $(u^*, v^*)$ of the bistable
system \eqref{bistable competition system} satisfies that
$(u^*, v^*)(t, x)\to(1, 0)$ as $t\to\infty$ locally uniformly for  $x\in\mathbb{R}$ (see Remark 1.1 in \cite{Peng Wu Zhou}).
Together with $c_{uv}>0$, Lemma 2.6 and Lemma 2.8 in \cite{Peng Wu Zhou} is available to assert that for any $c\in[0,c_{uv})$,
\begin{equation}\label{u*v*-behavior}
u^*(t,x)\ge 1-C_1e^{-k_1t}\ \ \mbox{and}\ \  v^*(t,x)\leq C_2e^{-k_2t}\ \ \mbox{for all}\ \ (t,x)\in[T,\infty)\times[-ct,ct].
\end{equation}
for some $T>T_0$, $C_i>0$, $k_i>0$ ($i=1,2$).

Combining \eqref{CP uv and u*v*} and \eqref{u*v*-behavior}, we immediately obtain the desired result.
\end{proof}

Recall $\mathscr{C}$ from \eqref{definition of bar c}.

\begin{lemma}\label{lm: decay estimamte u and v}
Assume that {\bf(H1)} holds.
Let $(u,v)$ be the solution of system \eqref{system} with initial data \eqref{initial data2}.
Further, assume that  $c_v>c_u$. Then the following hold:
\begin{itemize}
  \item[(i)] for any $c>\mathscr{C}$, there exist $C_1,\nu_1,T_1>0$ such that
  \beaa
  \sup_{x\geq c t} u(t,x)\leq C_1e^{-\nu_1 t}\quad \mbox{for all}\quad t\geq T_1.
  \eeaa
  \item[(ii)] for any $c_1$ and $c_2$ with $\mathscr{C}<c_1<c_2<c_v$, there exist $C_2,\nu_2,T_2>0$ such that
  \beaa
  \sup_{c_1 t\leq x\leq c_2 t} v(t,x)\geq 1-C_2 e^{-\nu_2 t}\quad \mbox{for all}\quad t\geq T_2.
  \eeaa
\end{itemize}
\end{lemma}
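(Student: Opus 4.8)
The plan is to build explicit super- and sub-solutions tailored to each of the two assertions, using the sharp decay rates encoded in the function $f$ and the threshold $\mathscr{C}$, and then invoke the comparison principle (Proposition~\ref{prop: cp}) together with Remark~\ref{weak supersol}. Throughout I will freely use Proposition~\ref{prop: spreading speed}(2) (which already gives the bare spreading speeds without rates), Lemma~\ref{lm: simple upper estimates}, and Lemma~\ref{lm: estimate of x=0}.

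For part (i), fix $c>\mathscr{C}$. The idea is to dominate $u$ on the leading edge $\{x\ge ct\}$ by a travelling exponential. Since $u$ satisfies $u_t\le u_{xx}+u(1-av)$ and, on the relevant region, $v$ is either small or close to $1$; the worst case for $u$ is where $v\approx 1$, giving the linearized coefficient $1-a$, so $u$ cannot grow faster than the rate associated with $\lambda(c')$ for speeds $c'$ slightly above $c^*$. Concretely, when $\mathscr{C}=c^*$ I would take a super-solution of the form $\bar u(t,x)=\min\{1+Me^{-t},\ A e^{-\lambda(c_0)(x-c_0 t)}\}$ for a speed $c_0$ with $c^*<c_0<c$, using Lemma~\ref{lm: simple upper estimates} for the first branch and Remark~\ref{weak supersol} to take the minimum; on $\{x\ge ct\}$ the second branch dominates and yields $\bar u(t,x)\le A e^{-\lambda(c_0)(c-c_0)t}$, which is the claimed exponential decay with $\nu_1=\lambda(c_0)(c-c_0)$. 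When $\mathscr{C}=c_{**}>c^*$ the relevant edge behaviour is governed instead by $\Lambda(c_2,\tilde c)$ from the discussion preceding Proposition~\ref{prop: spreading speed}; the super-solution must then be pieced together from a $\lambda(c_2)$-exponential behind the $v$-front and a $\Lambda$-exponential ahead of it, matched at an interface moving with speed between $c_{**}$ and $c_v$, which is exactly the delicate construction from \cite{Girardin Lam}. To start the comparison I need a time $T_1$ at which $u(T_1,\cdot)$ lies below the chosen profile; this follows because $u$ has already been pushed below any prescribed $\varepsilon$ on $\{x\ge c' t\}$ for $c'<c$ (Proposition~\ref{prop: spreading speed}(2)) and decays in $x$ by parabolic regularity, so the constant $A$ can be taken large enough.

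For part (ii), fix $\mathscr{C}<c_1<c_2<c_v$ and work with $w:=1-v$, which satisfies $w_t=dw_{xx}-rv(w+bu)+ (\text{zero at }w=0)$; more usefully $w_t\le dw_{xx}-r(1-w)w+rbu$ on the strip, and $u$ is already exponentially small there by part (i) applied with any $c\in(\mathscr{C},c_1)$. So I would construct a super-solution for $w$ of the form $\bar w(t,x)=B_1 e^{-\nu t}+B_2 e^{-\kappa(x-c_1 t)}+B_3 e^{\kappa(x-c_2 t)}$ (the first term absorbing the $u$-forcing via part (i), the two exponentials handling the two edges of the strip), choosing $\kappa>0$ and $\nu>0$ small so that each summand is individually a super-solution of the linearized equation $w_t=dw_{xx}-rw$ plus the forcing. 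On $\{c_1 t\le x\le c_2 t\}$ each exponential term is bounded by $e^{-(\text{positive})t}$, giving $w\le C_2 e^{-\nu_2 t}$, i.e. $v\ge 1-C_2 e^{-\nu_2 t}$. The initialization at $t=T_2$ uses that $1-v$ is already uniformly small on $\{c_3 t\le x\le c_2' t\}$ for any $\mathscr{C}<c_3<c_1$ and $c_2<c_2'<c_v$ (Proposition~\ref{prop: spreading speed}(2)), so for $T_2$ large the explicit profile dominates $1-v(T_2,\cdot)$ on the slightly larger strip, and I close the comparison on the parabolic boundary of $\{(t,x):t\ge T_2,\ c_1 t\le x\le c_2 t\}$ — the two lateral boundaries $x=c_1 t$ and $x=c_2 t$ are controlled precisely because I built in the two exponential terms vanishing there at an exponential rate in $t$.

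The main obstacle is part (i) in the genuinely nonlocal regime $\mathscr{C}=c_{**}$: there the super-solution is not a single travelling exponential but a carefully matched two-piece (or three-piece) object whose interface speed and the constants $\lambda(c_2)$, $\Lambda(c_2,\tilde c)$ must satisfy the algebraic compatibility relations coming from $f$ and $f^{-1}$, and verifying the differential inequalities at the matching interface (where only one-sided derivatives exist) is where all the real work lies — this is the technical heart borrowed from \cite{Girardin Lam}, and I would lean on their construction, adapting it to extract the explicit exponential-in-$t$ rate rather than just the spreading statement. By contrast, part (ii) is comparatively routine once part (i) is in hand, since $1-v$ then solves a scalar equation with an exponentially small, exponentially-localized forcing term on a strip whose boundaries recede at speeds bracketing the interior, and linear exponential barriers suffice.
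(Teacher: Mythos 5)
Your treatment of part (i) contains a genuine gap in the case you call "easy", $\mathscr{C}=c^*$. The single travelling exponential $\bar u=Ae^{-\lambda(c_0)(x-c_0t)}$ with $c^*<c_0<c$ is \emph{not} a super-solution on all of $\{x\ge ct\}$: ahead of the $v$-front (say $x\gtrsim c_vt$, which is contained in $\{x\ge ct\}$ whenever $c<c_v$) one has $v\approx 0$, so the relevant differential inequality is $\bar u_t\ge \bar u_{xx}+\bar u(1-\bar u)$, i.e.\ $c_0\lambda-\lambda^2\ge 1-\bar u$. Since $\lambda(c_0)$ solves $c_0\lambda-\lambda^2=1-a<1$, and indeed $\max_\lambda(c_0\lambda-\lambda^2)=c_0^2/4<1$ for every $c_0<c_u=2$, the inequality fails wherever $\bar u$ is small — precisely where the exponential branch is the active one in your minimum. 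Your heuristic "the worst case for $u$ is where $v\approx 1$" is inverted: the dangerous region is where $v\approx 0$ and $u$ sees the full growth rate $1$; this is exactly the mechanism behind nonlocal pulling. Consequently the matched (or perturbed-travelling-wave) construction is needed for \emph{every} $c\in(\mathscr{C},c_u)$, not only when $\mathscr{C}=c_{**}$; the condition $c_v\ge f(c^*)$ is what makes the matching with back rate $\lambda(c^*)$ possible, not what makes it unnecessary. The paper sidesteps all of this by observing that for $c>c_u$ a scalar KPP super-solution suffices, and for $\mathscr{C}<c<c_v$ it simply invokes Proposition~1.5 and Section~3.2.3 of Girardin--Lam, whose key tool is the minimal monotone travelling wave of a $\delta$-perturbed system. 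Your proposal would be repaired by doing the same in both subcases.

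Part (ii) of your proposal takes a genuinely different route from the paper and is essentially sound. The paper passes to the moving frame, compares $v$ with an auxiliary Dirichlet problem on a large but fixed box $[-c_3T,c_3T]$, and extracts the exponential rate from a Green's function lower bound borrowed from Kaneko--Matsuzawa. You instead put $w=1-v$, absorb the $u$-forcing via part (i), and use three elementary linear barriers $B_1e^{-\nu t}+B_2e^{-\kappa(x-c_1t)}+B_3e^{\kappa(x-c_2t)}$ on an expanding strip, shrinking the strip at the end so the lateral terms decay in $t$. This is simpler and avoids the Green's function machinery; the only point to make explicit is the initialization, since at time $T_2$ the profile is exponentially small in the interior of the strip while $w$ is only known to be $\le\varepsilon$ there — you must take $B_1\sim\varepsilon e^{\nu T_2}$ (harmless, as it only inflates the final constant $C_2$) and keep $\bar w\le O(\varepsilon)$ so that the quadratic term in $g(w)=-r(1-w)w$ is controlled. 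With that bookkeeping your argument for (ii) closes, conditional on (i).
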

\begin{proof}
 Let us briefly start with $(i)$. If $c>c_u$, then the conclusion is clear by comparing a super-solution of scalar KPP equation. Since $c_v>c_u$, it thus suffices to consider the case $\mathscr{C}<c<c_v$. In this case, the conclusion is  already included in  \cite[Proposition~1.5]{Girardin Lam} and the proof of \cite[Section 3.2.3, Theorem 1.1]{Girardin Lam}. We do not present the full details but only emphasize that a key tool is, for any small $\delta>0$, the {\it minimal} monotone traveling wave of the perturbed system
\begin{equation*}\label{perturbed tw solution}
\left\{
\begin{aligned}
&U''+cU'+U(1+\delta-U-aV)=0,\\
&dV''+cV'+rV(1-2\delta-V-bU)=0,\\
&(U,V)(-\infty)=(1+\delta,0),\ (U,V)(\infty)=(0,1-2\delta),\\
&U'<0,\ V'>0.
\end{aligned}
\right.
\end{equation*}

\medskip

Let us now turn to $(ii)$ for which the above perturbation argument seems unapplicable.  Let $\mathscr{C}<c_1<c_2<c_v$ be given. We only deal with $x\geq 0$. From \cite[Theorem 1.1]{Girardin Lam} we know
\begin{equation*}
    \lim_{t\to\infty}\sup_{c_1 t\leq  x\leq c_2t}\Big(u(t,x)+|1-v(t,x)|\Big)=0.
    \end{equation*}
From this and $(i)$, we can choose $\varepsilon>0$ small enough and $T_0\gg1$ such that
$$0<u(t,x)\leq C_1 e^{-\nu_1 t},\quad v(t,x)> 1-\varepsilon\quad\mbox{for all }\quad (t,x)\in[T_0,\infty)\times[c_1 t, c_2 t].$$
From the $v$-equation in system \eqref{system}, we have
\begin{equation}\label{v-ineq}
v_t\geq  d v_{xx}+r (1-\varepsilon)(1-v)-rbC_1v e^{-\nu_1 t}\quad\mbox{for all}\quad  (t,x)\in[T_0,\infty)\times[c_1 t, c_2 t].
\end{equation}
Defining
\begin{equation*}\label{def-tilde-v}
\tilde{v}(t,x):=v(t,x+\tilde{c}t),\quad \tilde{c}:=\frac{c_1+c_2}{2},
\end{equation*}
it follows  from \eqref{v-ineq} that
$$\tilde{v}_t\geq d \tilde{v}_{xx}+\tilde{c}\tilde{v}_x+r (1-\varepsilon)(1-\tilde{v})-rbC_1\tilde{v} e^{-\nu_1 t}\quad\mbox{for all}\quad  (t,x)\in[T_0,\infty)\times[-c_3 t, c_3 t],$$
where $c_3:=\frac{c_2-c_1}2$.

To estimate $\tilde{v}$, for any $T\geq T_0$, we define
$$
\alpha(t):=1+\frac{bC_1}{1-\varepsilon}e^{-\nu_1(t+T)}\quad\text{for all}\quad t\geq 0.
$$
Up to enlarging $T>0$ if necessary, we may assume $\alpha(0)<\frac 1{1-\varepsilon}$. Now, let us first consider the auxiliary problem
\begin{equation}\label{phi-sys}
\begin{cases}
\phi_t=d\phi_{xx}+\tilde{c}\phi_x+r (1-\varepsilon)[1-\alpha(t)\phi], &  t>0,\ -c_3T<x<c_3T,\\
\phi(t,\pm c_3{T})=1-\varepsilon, & t>0,\\
\phi(0,x)=1-\varepsilon, & -c_3{T}\leq x\leq c_3{T}.
\end{cases}
\end{equation}
Letting
$$\Phi(t,x):=e^{Q(t)}[\phi(t,x)-1+\varepsilon],\quad Q(t):=r(1-\varepsilon)t-\frac{rbC_1}{\nu_1}e^{-\nu_1(t+T)},$$
so that $Q'(t)=r(1-\varepsilon)\alpha(t)$, it follows from \eqref{phi-sys} that
\begin{equation}\label{Phi-sys}
\begin{cases}
\Phi_t=d\Phi_{xx}+\tilde{c}\Phi_x+r (1-\varepsilon)e^{Q(t)}(1-(1-\varepsilon)\alpha(t)), & t>0,\ -c_3T<x<c_3T,\\
\Phi(t,\pm c_3{T})=0, & t> 0,\\
\Phi(0,x)=0, & -c_3{T}\leq x\leq c_3{T}.
\end{cases}
\end{equation}
Up to a rescaling, we may assume $d=1$ so that \eqref{Phi-sys} is very comparable to \cite[problem (3.12)]{Kaneko Matsuzawa} on which we now rely. Denoting $G_1(t,x,z)$ the Green function of \cite[page 53]{Kaneko Matsuzawa} (with obvious changes of constants), we obtain the analogous of \cite[(3.14)]{Kaneko Matsuzawa}, namely
$$
\Phi(t,x)\ge r(1-\varepsilon)\int_0^te^{Q(s)}(1-(1-\varepsilon)\alpha(s))\left(\int_{-c_3T}^{c_3T}G_1(t-s,x,z)dz\right)ds,
$$
for all $t>0$, $ -c_3 T<x<c_3 T$. Next, for any small $0<\delta<1$, we define
$$
D_{\delta}:=\left\{(t,x)\in \mathbb{R}^2 : 0<t <  \delta^2c_3 {T},  \vert x\vert <(1-\delta)c_3 T\right\}.
$$
From the same process used in \cite[pages 54-55]{Kaneko Matsuzawa}, there exist $C_3, C_4>0$ such that the following lower estimate holds
$$
\Phi(t,x)\ge r(1-\varepsilon)(1-(1-\varepsilon)\alpha(0))(1-C_3e^{-C_4T})\int_0^te^{Q(s)}ds\quad\text{for all}\quad (t,x)\in D_\delta,
$$
resulting in
\begin{equation}\label{inequality of phi}
\phi(t,x)\ge \Psi(t) (1-C_3e^{-C_4T})(1-(1-\varepsilon)\alpha(0))+1-\varepsilon\quad\text{for all}\quad (t,x)\in D_\delta,
\end{equation}
where $\Psi(t):=r(1-\varepsilon)e^{-Q(t)}\int_0^te^{Q(s)}ds$. Denoting $K=\frac{rbC_1}{\nu_1}$, we have
\begin{equation*}
\begin{aligned}
\Psi(t)&\ge r(1-\varepsilon)e^{-Q(t)}\int_0^te^{r(1-\varepsilon)s}e^{-Ke^{-\nu_1T}}ds\\
&= e^{-r(1-\varepsilon)t}e^{Ke^{-\nu_1(t+T)}}e^{-Ke^{-\nu_1T}}\int_0^tr(1-\varepsilon)e^{r(1-\varepsilon)s}ds\\
&= e^{Ke^{-\nu_1T}(e^{-\nu_1t}-1)}(1-e^{-r(1-\varepsilon)t}).
\end{aligned}
\end{equation*}
Inserting this into \eqref{inequality of phi} and using  $e^y\ge 1+y$ for all $y\in\mathbb{R}$, we have, for all $(t,x)\in D_\delta$,
\begin{equation*}
\begin{aligned}
\phi(t,x)&\ge e^{Ke^{-\nu_1T}(e^{-\nu_1t}-1)}(1-e^{-r(1-\varepsilon)t})(1-C_3e^{-C_4T})(1-(1-\varepsilon)\alpha(0))+1-\varepsilon\\
&\ge (1-Ke^{-\nu_1T}(1-e^{-\nu_1t}))(1-e^{-r(1-\varepsilon)t})(1-C_3e^{-C_4T})(1-(1-\varepsilon)\alpha(0))+1-\varepsilon\\
&\ge (1-Ke^{-\nu_1T})(1-e^{-r(1-\varepsilon)t})(1-C_3e^{-C_4T})(1-(1-\varepsilon)\alpha(0))+1-\varepsilon.
\end{aligned}
\end{equation*}
Letting
$$
I_1:=1-Ke^{-\nu_1T}, \quad I_2(t):=1-e^{-r(1-\varepsilon)t}, \quad I_3:=1-C_3e^{-C_4T},
$$
we get
$$
\phi(t,x)\ge I_1I_2I_3(t)+(1-\varepsilon)(1-I_1I_2I_3(t)\alpha(0))\quad\text{for all}\quad(t,x)\in D_\delta.
$$
Now observe that  $I_1I_2I_3 \leq I_1 I_2(\delta ^2 c_3 T) I_3$. Furthermore some straightforward computations show that, if
\begin{equation}
\label{delta}
r(1-\varepsilon)\delta ^2c_3<\nu _1,
\end{equation}
then $I_1 I_2(\delta ^2 c_3 T) I_3\alpha(0)\leq 1$ up to enlarging $T>0$ if necessary. As a result, for all $(t,x) \in D_\delta$,
$$
\varphi(t,x) \geq I_1I_2I_3(t)\geq  1-K_1e^{-\nu_1T}-K_2e^{-r(1-\varepsilon)t},
$$
with some $K_1, K_2>0$. The last inequality holds since we can always choose $\nu_1<C_4$. As a conclusion, we have
\begin{equation}\label{phi-estiamte}
\phi(t,x)\geq 1-K_1e^{-\nu_1 T}-K_2e^{-r(1-\varepsilon)t}\quad\text{for all}\quad (t,x)\in D_{\delta},
\end{equation}
provided that $\delta>0$ is sufficiently small for \eqref{delta} to hold and  $T>0$ is sufficiently large.

In particular \eqref{phi-estiamte} implies that, for all $\vert x\vert \leq (1-\delta)c_3T$,
\begin{equation}\label{phi-estimate2}
\phi\left(\delta^2 c_3 {T},x\right) \geq  1-K_1e^{-\nu_1 {T}}- K_2 e^{-r(1-\varepsilon) \delta^2 c_3{T}}
                               \geq 1-(K_1+K_2) e^{-r(1-\varepsilon) \delta^2 c_3 {T}},
                                                                 \end{equation}
in virtue of \eqref{delta}.  On the other hand, we know from the comparison principle that $\tilde{v}(t+T,x)\geq \phi(t,x)$ for $t\geq0$ and $|x|\leq c_3 T$, which
together with \eqref{phi-estimate2} implies that
$$\tilde{v}\left(\delta^2c_3 T+T,x\right)\geq1-(K_1+K_2) e^{-r(1-\varepsilon) \delta^2 c_3 {T}}\quad \mbox{for all}\quad |x|\leq (1-\delta)c_3 T.$$
We further take
$t=(\delta^2c_3+1)T$,
which yields
$$\tilde{v}(t,x)\geq 1- C_2e^{-\nu_2t}\quad \mbox{for all large $t$ and $|x|\leq \frac{(1-\delta)c_3}{1+c_3\delta^2}t$},$$
where $C_2:=K_1+K_2$ and $\nu_2:=\frac{r(1-\varepsilon)\delta^2c_3}{1+c_3\delta^2}>0$. Recalling that $\tilde{v}(t,x)=v(t,x+\tilde{c}t)$ with $\tilde{c}=\frac{c_1+c_2}{2}$, that $c_3=\frac{c_2-c_1}{2}$ and since $\delta>0$ can be chosen arbitrarily small, the above estimate completes the proof of $(ii)$.
\end{proof}

The rest of this paper is organized as follows.
In section \ref{Sec:Cauchy prob-ic-1}, we will first study the Cauchy problem of system \eqref{system} with initial data \eqref{initial data} and prove Theorem~\ref{th: profile}. In section \ref{Sec:Cauchy prob-ic-2},
we focus on the Cauchy problem of system \eqref{system} with initial data \eqref{initial data2} and prove Theorem~\ref{th2: profile}.

\section{Cauchy problem with Scenario \eqref{initial data}}\label{Sec:Cauchy prob-ic-1}
\setcounter{equation}{0}

In this section, we shall prove Theorem~\ref{th: profile}.
The proof relies on delicate constructions of sub-solution and super-solution,
which are presented in subsection \ref{section: Construction of sub solution no ac}
and subsection~\ref{section: Construction of super solution no ac}, respectively.
The proof of Theorem~\ref{th: profile} is given in subsection~\ref{subsection:thm1}.

\subsection{Construction of sub-solution}\label{section: Construction of sub solution no ac}
We look for a sub-solution $(\underline{u},\bar{v})$ in the form of:
\begin{equation}\label{def of  sub sol no ac}
\left\{
\begin{aligned}
\underline{u}(t,x)&:=U(x-c^*t+\zeta(t))-P(t)\min\{e^{-\alpha_0(x-c^*t+x_0)},1\},\\
\bar{v}(t,x)&:=V(x-c^*t+\zeta(t))+Q(t),
\end{aligned}
\right.
\end{equation}
where $P(t)=p_0e^{-\mu_0 t}$, $Q(t)=q_0e^{-\mu_0 t}$, $\zeta(t)=\zeta_0-e^{-\tau_0 t}$, $(c^*,U,V)$ is the minimal traveling wave defined as \eqref{tw solution}. Here, all of the parameters are positive and will be determined in the following proof. For the simplicity, we denote $\xi:= x-c^*t+\zeta(t)$ and $W(t,x):=e^{-\alpha_0(x-c^*t+x_0)}$.

Clearly,
there exists a curve $\Gamma:[0,\infty)\to\mathbb{R}$ denoted by
$\Gamma(t):=c^*t-x_0$ such that $W(t,\Gamma(t))=1$ for all $t\ge0$.
Thus, it holds
$$W(t,x)\le 1\ \ \mbox{for}\ \ (t,x)\in S_1:=\{(t,x)\ |\ x\ge \Gamma(t)\},$$
$$W(t,x)\ge 1\ \ \mbox{for}\ \ (t,x)\in S_2:=\{(t,x)\ |\ x\le \Gamma(t)\}.$$
Then by some straightforward computation, we obtain that, for $(t,x)\in S_1$, it holds
\begin{equation}\label{N1 sub sol large x no ac}
\begin{aligned}
N_1[\underline{u},\bar{v}]:=&\partial_t\underline{u}-\underline{u}_{xx}-F(\underline{u},\bar{v})\\
=&\zeta'U'+(\mu_0-c^*\alpha_0+\alpha_0^2+1-2U-a(V+Q)+PW)PW+aQU.
\end{aligned}
\end{equation}
And for $(t,x)\in S_2$, it holds
\begin{equation*}\label{N1 sub sol small x no ac}
N_1[\underline{u},\bar{v}]=\zeta'U'+(\mu_0+1-2U-a(V+Q)+P)P+aQU.
\end{equation*}
On the other hand, for $(t,x)\in (0,\infty)\times\mathbb{R}$ it holds
\begin{equation}\label{N2 sub sol no ac}
\begin{aligned}
N_2[\underline{u},\bar{v}]:=&\partial_t\bar{v}-d\bar{v}_{xx}-G(\underline{u},\bar{v})\\
=&\zeta'V'+r(2V+Q+b(U-P\min\{W,1\})-1-\frac{\mu_0}{r})Q-brVP\min\{W,1\}.
\end{aligned}
\end{equation}

Next, we show that $(\underline{u},\bar{v})$ is a 
sub-solution by choosing suitable parameters.
In the following discussion, we choose $M>0$ sufficiently large and divide the whole space into three parts:
\begin{itemize}
	\item[(1)] $\Omega_1=\{M\le x-c^*t+\zeta(t)\}$;
	\item[(2)] $\Omega_2:=\{x-c^*t+\zeta(t)\le -M\}$;
	\item[(3)] $\Omega_3:=\{-M\le x-c^*t+\zeta(t)\le M\}$.
\end{itemize}

\noindent{\bf{Case 1}}: We first consider $(t,x)\in\Omega_1$.
Then, for some small $\delta>0$,
\beaa
\Omega_1=\{(t,x)\ |\ 0\le U\le \delta,\ 1-\delta\le V\le 1\}.
\eeaa
Note that, by setting
any $\zeta_0>0$ and $x_0=\zeta_0+M$, we have $\{x=\Gamma(t)\}\subset\Omega_2$ for all $t\ge0$, and hence $\Omega_1\subset S_1$.

Since $\delta>0$ can be chosen arbitrarily small and  $PW\to 0$ uniformly as $t\to\infty$, by setting
\begin{equation}\label{sub sol inequality condition 1 no ac}
\alpha_0\in(-\lambda^+_u(c^*),-\lambda_u^-(c^*)),
\end{equation}
we have
$(\mu_0-\alpha_0 c^*+\alpha_0^2+(1-a+a\delta)+PW)PW\le -C_0PW$
with some $\mu_0>0$ and $C_0>0$ for all large $t$. Moreover, by applying Lemma \ref{lm: behavior around + infty}, there exists $C_1>0$ such that
\begin{equation}\label{estimate for dU+}
U'\le -C_1U\ \ \mbox{for all}\ \ (t,x)\in \Omega_1.
\end{equation}
From \eqref{N1 sub sol large x no ac} and \eqref{estimate for dU+}, we can obtain that
\begin{equation*}
N_1[\underline{u},\bar{v}]\le-C_1\tau_0 e^{-\tau_0 t}U-C_0PW+aQU.
\end{equation*}
Therefore, by setting
\begin{equation}\label{sub sol inequality condition 2 no ac}
\tau_0<\mu_0,
\end{equation}
it holds $N_1[\underline{u},\bar{v}]\le 0$ for all $(t,x)\in \Omega_1$ and $t\ge T$ for some $T\gg 1$.

Next, we deal with the inequality of $N_2[\underline{u},\bar{v}]$. Since $V'>0$, $\zeta'>0$
and $\Omega_1\subset S_1$, \eqref{N2 sub sol no ac} implies that
\begin{equation*}\label{computation sub sol N2 no ac}
N_2[\underline{u},\bar{v}]\ge r(2V-bP-1-\frac{\mu_0}{r})Q-brPVW.
\end{equation*}
Since 
$V\ge 1-\delta$ in $\Omega_1$, by choosing $\mu_0<r/2$, it holds that
\beaa
N_2[\underline{u},\bar{v}]&\ge& r\Big[1-2\delta-bp_0e^{-\mu_0 t}-\frac{\mu_0}{r}\Big]q_0e^{-\mu_0 t}-brp_0e^{-\mu_0 t}W\\
&\ge& r \Big[\frac{1}{2}(1-\frac{2\mu_0}{r})q_0-bp_0W\Big]e^{-\mu_0 t}
\eeaa
for all large $t$, where $\delta>0$ is chosen smaller if necessary.
Therefore, by setting
\begin{equation}\label{sub sol inequality condition 3 no ac}
\mu_0<\frac{r}{2}\ \ \mbox{and}\ \ \frac{1}{2}(1-\frac{2\mu_0}{r})q_0> bp_0e^{-\alpha_0(M-\zeta_0+x_0)}=bp_0e^{-2\alpha_0 M}
\ \mbox{(since $x_0=\zeta_0+M$),}
\end{equation}
there exists $T\gg1$ such that $N_2[\underline{u},\bar{v}]\ge 0$ for all $(t,x)\in \Omega_1$ and $t\ge T$.

\bigskip
\noindent{\bf{Case 2}}: We consider $(t,x)\in\Omega_2$.
Then, for some small $\delta>0$,
\beaa
\Omega_2=\{(t,x)\ |\ 1\ge U\ge 1-\delta,\ \delta\ge V\ge 0\}.
\eeaa
Since $\zeta'>0$, $U'<0$ and $P\to 0$ uniformly as $t\to\infty$,  for $(t,x)\in \Omega_2\cap S_1$, from \eqref{N1 sub sol large x no ac}, we have
\beaa
N_1[\underline{u},\bar{v}]&\leq&(\mu_0-\alpha_0 c^*+\alpha_0^2+1-2U+PW)PW+aQU\\
&\leq& (\mu_0-\alpha_0 c^*+\alpha_0^2+PW)PW+[1-2(1-\delta)]PW+aQ.
\eeaa
Moreover, for $(t,x)\in \Omega_2\cap S_1$, we have 
$PW\ge Pe^{-\alpha_0(-M-\zeta(t)+x_0)}=Pe^{-\alpha_0(e^{-\tau_0 t})}$.
Therefore, by setting  $\alpha_0$ as \eqref{sub sol inequality condition 1 no ac} and
\begin{equation}\label{sub sol inequality condition 4 no ac}
\mu_0<1-a\ \ \mbox{and}\ 
p_0>aq_0,
\end{equation}
it holds $N_1[\underline{u},\bar{v}]\le 0$ for all $(t,x)\in S_1\cap\Omega_2$ and $t\ge T$ for some $T\gg1$.

On the other hand, for $(t,x)\in\Omega_2\cap S_2$, we have $\min\{W,1\}=1$ and thus
\begin{equation*}
N_1[\underline{u},\bar{v}]\le (\mu_0+1+P-2U)P+aQU\le [2-a+P-2(1-\delta)]P+aQ,
\end{equation*}
where we used $\mu_0<1-a$.
Therefore, by setting
\begin{equation}\label{sub sol inequality condition 5 no ac}
\mu_0<1\ \ \mbox{and}\ \ 
p_0>q_0,
\end{equation}
there exists $T\gg1$ such that
$N_1[\underline{u},\bar{v}]\le 0$ for all $(t,x)\in S_2\cap\Omega_2$ and $t\ge T$ for some $T\gg 1$.

Next, we will deal with the inequality of $N_2[\underline{u},\bar{v}]$. To verify $N_2[\underline{u},\bar{v}]\ge 0$, we first observe that, from Lemma \ref{lm: behavior around - infty} there exists $C_4>0$ such that $\zeta'V'\ge C_4\zeta'V$.
Then, from \eqref{N2 sub sol no ac}, we have
$$N_2[\underline{u},\bar{v}]\ge C_4\zeta'V+r(bU-1-bP-\frac{\mu_0}{r})Q-brPV.$$
Thus, by setting $\mu_0<r(b-1)$, since $P\to 0$ uniformly as $t\to\infty$, we have
$$N_2[\underline{u},\bar{v}]\ge C_4\zeta'V-brPV.$$
Therefore, by setting
\begin{equation}\label{sub sol inequality condition 6 no ac}
\tau_0<\mu_0<r(b-1),
\end{equation}
it holds $N_2[\underline{u},\bar{v}]\ge 0$ for all $(t,x)\in \Omega_2$ and $t\ge T$ for some $T\gg 1$.

\bigskip
\noindent{\bf{Case 3}}: We consider $(t,x)\in\Omega_3$.
Then, for some small $\delta_i>0$ ($i=1,2$),
\beaa
\Omega_3=\{(t,x)\ |\ 1-\delta_1\ge U\ge \delta_2,\ 1-\delta_2\ge V\ge \delta_1\}.
\eeaa
In this range, there exists $C_2>0$ such that $U'\le-C_2$, which implies that $\zeta'U'\le-C_2\zeta'$.  Therefore, we have
\begin{equation*}
\begin{aligned}
N_1[\underline{u},\bar{v}]\le-C_2\zeta'+C_3P+aQU.
\end{aligned}
\end{equation*}
Then, for $\tau_0$ and $\mu_0$ satisfying \eqref{sub sol inequality condition 2 no ac},
it holds $N_1[\underline{u},\bar{v}]\le 0$ for all $(t,x)\in \Omega_3$ and $t\ge T$ for some $T\gg1$.

Next, we deal with the inequality of $N_2[\underline{u},\bar{v}]$. Note that, in this range, we have $V'\ge C_4>0$, which implies that $\zeta'V'\ge C_5\zeta'$. Therefore, we have
$$N_2[\underline{u},\bar{v}]\ge C_4\zeta'-C_5Q-brP.$$
Similarly, for $\tau_0$ and $\mu_0$ satisfying \eqref{sub sol inequality condition 2 no ac},
it holds $N_2[\underline{u},\bar{v}]\ge 0$ for all $(t,x)\in \Omega_3$ and $t\ge T$ for some $T\gg1$.

By concluding the conditions \eqref{sub sol inequality condition 1 no ac}, \eqref{sub sol inequality condition 2 no ac}, \eqref{sub sol inequality condition 3 no ac}, \eqref{sub sol inequality condition 4 no ac}, \eqref{sub sol inequality condition 5 no ac}, \eqref{sub sol inequality condition 6 no ac} provided from the discussion above, we get a key lemma as follow:
\begin{lemma}\label{lm: sub sol inequality}
For any $\alpha_0, \mu_0, \tau_0, x_0, \zeta_0>0$ satisfying
\begin{itemize}
	\item[(1)] $\alpha_0\in(-\lambda_u^+(c^*),-\lambda_u^-(c^*))$,
	\item[(2)] $\tau_0<\mu_0<\min\{1-a,r(b-1),\frac{r}{2}\}$,
	\item[(3)] $x_0-\zeta_0>0$ sufficiently large,
\end{itemize}
then there exists $p_0>0$, $q_0>0$ and $T\ge 0$ such that
$$N_1[\underline{u},\bar{v}]\le 0 \ \ \mbox{and}\ \ N_2[\underline{u},\bar{v}]\ge 0\ \ \mbox{in}\ \ [T,\infty)\times\mathbb{R},$$
where $(\underline{u},\bar{v})$ is defined as \eqref{def of  sub sol no ac} with $c=c^*$.
\end{lemma}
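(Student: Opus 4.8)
The plan is to verify the two differential inequalities directly from the expressions \eqref{N1 sub sol large x no ac}--\eqref{N2 sub sol no ac} already computed, working region by region in the moving variable $\xi=x-c^*t+\zeta(t)$. Since $(c^*,U,V)$ solves \eqref{tw solution}, the ``unperturbed'' contributions to $N_1,N_2$ cancel, and what remains is linear (up to the quadratic self-interaction $(PW)^2$) in the small quantities $P(t)=p_0e^{-\mu_0t}$, $Q(t)=q_0e^{-\mu_0t}$, $\zeta'(t)=\tau_0e^{-\tau_0t}$, and in the exponential profile $W=e^{-\alpha_0(x-c^*t+x_0)}$. The whole game is to arrange signs so that in each region one designated term dominates all the others for large $t$. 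Concretely, fix $M\gg1$, split $\mathbb{R}\times[T,\infty)$ into $\Omega_1=\{\xi\ge M\}$, $\Omega_2=\{\xi\le-M\}$, $\Omega_3=\{|\xi|\le M\}$, and pick $\zeta_0>0$, $x_0=\zeta_0+M$ so that the curve $\{W=1\}$ sits inside $\Omega_2$; in particular $\Omega_1\subset S_1$, so on $\Omega_1$ one may use \eqref{N1 sub sol large x no ac} freely.

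On $\Omega_1$ (the leading edge, $U\approx0$, $V\approx1$) the decisive term of $N_1$ is $(\mu_0-c^*\alpha_0+\alpha_0^2+1-a+O(\delta)+PW)\,PW$: this is negative because $\alpha_0^2-c^*\alpha_0+(1-a)<0$ precisely when $\alpha_0$ lies strictly between the roots $-\lambda_u^{\pm}(c^*)$ of this quadratic, i.e.\ condition~(1), and a small $\mu_0$ together with small $\delta$ and decaying $PW$ cannot spoil the strict inequality. The leftover $\zeta'U'+aQU$ is absorbed using $U'\le -C_1U$ from Lemma~\ref{lm: behavior around + infty} and $\tau_0<\mu_0$. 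For $N_2$ on $\Omega_1$ one uses $V\ge1-\delta$, so the coefficient of $Q$ is $\approx r(1-2\delta-\mu_0/r)>0$ once $\mu_0<r/2$, and this beats $-brPVW$ provided $q_0$ exceeds $bp_0e^{-2\alpha_0M}$ -- the quantitative form of the ``$x_0-\zeta_0$ large'' requirement in~(3). On $\Omega_2$ (the trailing region, $U\approx1$, $V\approx0$) the sign of $N_1$ comes from $1-2U\approx-1$, combined with $\mu_0<1-a$ and $p_0>aq_0$ on $S_1$ (resp.\ $p_0>q_0$ on $S_2$, where $\min\{W,1\}=1$); here one notes $PW$ is bounded below by $Pe^{-\alpha_0 e^{-\tau_0 t}}$ so the negative term does not degenerate. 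For $N_2$ on $\Omega_2$ one invokes Lemma~\ref{lm: behavior around - infty} to get $\zeta'V'\ge C_4\zeta'V$, and the coefficient $r(bU-1-bP-\mu_0/r)\approx r(b-1)>0$; with $\tau_0<\mu_0<r(b-1)$ this dominates $-brPV$.

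On $\Omega_3$ (the bulk, both components bounded strictly between $0$ and $1$) strict monotonicity gives $U'\le -C_2<0$ and $V'\ge C_4>0$, so $\zeta'U'$ and $\zeta'V'$ have the right sign and are of order $\tau_0e^{-\tau_0 t}$, while $P,Q$ decay at the faster rate $\mu_0>\tau_0$ and are negligible; hence $\tau_0<\mu_0$ suffices there. Collecting conditions (1) $\alpha_0\in(-\lambda_u^+(c^*),-\lambda_u^-(c^*))$, (2) $\tau_0<\mu_0<\min\{1-a,r(b-1),r/2\}$, and (3) $x_0-\zeta_0$ large, one then chooses $p_0,q_0$: the $\Omega_1$ $v$-inequality wants $q_0$ not too small compared with $p_0e^{-2\alpha_0M}$, whereas the $\Omega_2$ $u$-inequality wants $p_0>aq_0$, and these are compatible because, with $M$ large, $e^{-2\alpha_0M}$ is tiny. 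Taking $T\gg1$ so that all the ``for all large $t$'' estimates hold simultaneously finishes the proof. I expect the genuine obstacle to be $\Omega_1$: the admissible window for $\alpha_0$ is nonempty exactly because $c^*>2\sqrt{1-a}$ (assumption {\bf(H2)}, making the discriminant $c^{*2}-4(1-a)$ positive), and one must simultaneously match the subtracted term $P(t)W(t,x)$ to the genuine decay rate $e^{\lambda_u^-(c^*)\xi}$ of $U$ at $+\infty$ (to keep $\underline u\ge0$ and control the residual) while juggling the coupled constraints linking $q_0$, $p_0e^{-2\alpha_0M}$, $M$ and $x_0-\zeta_0$; the rest is careful but routine bookkeeping of exponential rates.
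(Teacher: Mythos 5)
Your proposal is correct and follows essentially the same route as the paper: the identical three-region decomposition in the moving variable $\xi=x-c^*t+\zeta(t)$ with $x_0=\zeta_0+M$ forcing $\Omega_1\subset S_1$, the sign of the quadratic $\alpha_0^2-c^*\alpha_0+(1-a)$ on the leading edge, the exponential bounds $U'\le -C_1U$ and $\zeta'V'\ge C_4\zeta'V$ from Lemmas~\ref{lm: behavior around + infty}--\ref{lm: behavior around - infty}, and the same compatible constraints $\frac12(1-2\mu_0/r)q_0>bp_0e^{-2\alpha_0M}$ versus $p_0>aq_0$ (resp.\ $p_0>q_0$ on $S_2$). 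No gaps.
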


\begin{remark}\label{rk: mu for subsol}
Note that, from the proof of Lemma~\ref{def of  sub sol no ac}, $T$  always can be chosen independently of all small $\mu_0>0$.
More precisely, if $\mu_0>0$ becomes smaller, we can choose smaller $p_0$ and $q_0$ such that
the differential inequalities still holds for $t\ge T$ with the same $T$.
\end{remark}

\begin{lemma}\label{lm: sub sol boundary condition no ac}
Let $(\underline{u},\bar{v})$ be defined as \eqref{def of  sub sol no ac}
with $\alpha_0$, $\mu_0$, $\tau_0$, $\zeta_0$, $x_0$, $p_0$, $q_0$ satisfying the conditions in Lemma \ref{lm: sub sol inequality}. Then there exist $T_0,T^{*}>0$ such that the solution $(u,v)$ with initial data \eqref{initial data}
satisfies
$$u(t+T^*,x)\ge\underline{u}(t,x),\ v(t+T^*,x)\le\bar{v}(t,x)\quad\mbox{for all}\quad(t,x)\in[T_0,\infty)\times[0,\infty).$$
\end{lemma}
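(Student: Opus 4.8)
The plan is to invoke the comparison principle (Proposition~\ref{prop: cp}) on the quarter‑space $\Omega=(T_0,\infty)\times(0,\infty)$, comparing the constructed pair $(\underline u,\bar v)$ with the time‑shifted exact solution $(u(\cdot+T^*,\cdot),v(\cdot+T^*,\cdot))$, which is simultaneously a super‑ and a sub‑solution. By Lemma~\ref{lm: sub sol inequality} (together with Remark~\ref{weak supersol}, which absorbs the Lipschitz corner of $\min\{W,1\}$, since $\underline u=\max\{U-PW,\,U-P\}$) the pair $(\underline u,\bar v)$ is already a generalized sub‑solution on $[T,\infty)\times\mathbb R$. Hence I will take $T_0\ge T$ and reduce the lemma to verifying the two pieces of the parabolic boundary of $\Omega$: the initial slice $\{t=T_0\}$, i.e. $u(T_0+T^*,x)\ge\underline u(T_0,x)$ and $v(T_0+T^*,x)\le\bar v(T_0,x)$ for $x>0$; and the lateral face $\{x=0\}$, i.e. $u(t+T^*,0)\ge\underline u(t,0)$ and $v(t+T^*,0)\le\bar v(t,0)$ for $t>T_0$ (the face $\{x=+\infty\}$ may be dropped). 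The role of the shift $T^*$ is to let the genuine solution, which spreads at speed $c^*$, saturate near $(1,0)$ behind its front before being compared to $(\underline u,\bar v)$, which already sits near the wave.

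The central tool is Lemma~\ref{lm: estimate of x=0}: fixing a small $c\in(0,c_{uv})$ it provides $C_1,C_2,k_1,k_2,T'>0$ such that $u(t,x)\ge 1-C_1e^{-k_1t}$ and $v(t,x)\le C_2e^{-k_2t}$ on $[T',\infty)\times[-ct,ct]$. The key flexibility is that, by Remark~\ref{rk: mu for subsol}, the decay exponent $\mu_0$ of the buffers $P(t)=p_0e^{-\mu_0t}$ and $Q(t)=q_0e^{-\mu_0t}$ may be taken \emph{arbitrarily small} — in particular $\mu_0<\min\{k_1,k_2\}$, in addition to the constraints of Lemma~\ref{lm: sub sol inequality} — without changing the threshold time $T$, at the sole cost of shrinking $p_0,q_0$. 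On the lateral face, using $0\le U\le1$, $V\ge0$ and $\min\{W(t,0),1\}\ge e^{-\alpha_0x_0}$ for $t\ge0$, one gets $\underline u(t,0)\le1-p_0e^{-\alpha_0x_0}e^{-\mu_0t}$ and $\bar v(t,0)\ge q_0e^{-\mu_0t}$, while Lemma~\ref{lm: estimate of x=0} gives $u(t+T^*,0)\ge1-C_1e^{-k_1(t+T^*)}$ and $v(t+T^*,0)\le C_2e^{-k_2(t+T^*)}$ as soon as $T_0+T^*\ge T'$. Since $\mu_0<\min\{k_1,k_2\}$, each required inequality amounts to ``(fixed positive constant)$\,\cdot e^{(k_i-\mu_0)t}\ge e^{-k_iT^*}$'', whose left side is increasing in $t\ge T_0$; it therefore holds for all $t\ge T_0$ once $T^*$ is large enough.

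For the initial slice, split $[0,\infty)$ at $x=c(T_0+T^*)$. On the inner cone $0\le x\le c(T_0+T^*)$, Lemma~\ref{lm: estimate of x=0} gives $u(T_0+T^*,x)\ge1-C_1e^{-k_1(T_0+T^*)}$ and $v(T_0+T^*,x)\le C_2e^{-k_2(T_0+T^*)}$, whereas, $U$ being decreasing, $\sup_{x\ge0}\underline u(T_0,x)\le U(-c^*T_0+\zeta(T_0))<1$ and $\bar v(T_0,x)\ge Q(T_0)=q_0e^{-\mu_0T_0}>0$ are fixed; so both inequalities hold for $T^*$ large. On the leading edge $x>c(T_0+T^*)$: for $u$, since $u\ge0$ it suffices that $\underline u(T_0,x)\le0$; by Lemma~\ref{lm: behavior around + infty} one has $U(\xi)\le Ce^{\lambda_u^-(c^*)\xi}$ for large $\xi$, while for large $x$ one has $P(T_0)\min\{W(T_0,x),1\}=P(T_0)e^{-\alpha_0(x-c^*T_0+x_0)}$, and since \eqref{sub sol inequality condition 1 no ac} forces $\lambda_u^-(c^*)+\alpha_0<0$, the subtracted term eventually dominates $U$; thus $\underline u(T_0,\cdot)\le0$ on $[x_\ast,\infty)$ for a \emph{fixed} $x_\ast$, and we only need $c(T_0+T^*)\ge x_\ast$. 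For $v$, Lemma~\ref{lm: simple upper estimates} gives $v(T_0+T^*,x)\le1+Me^{-r(T_0+T^*)}$, while by monotonicity of $V$, $\bar v(T_0,x)\ge V\big(c(T_0+T^*)-c^*T_0+\zeta(T_0)\big)+q_0e^{-\mu_0T_0}$, whose first term tends to $V(+\infty)=1$ as $T^*\to\infty$; hence $\bar v(T_0,x)\ge1+Me^{-r(T_0+T^*)}$ for $T^*$ large. Collecting the finitely many lower bounds demanded of $T^*$ — each of the shape ``a quantity vanishing as $T^*\to\infty$ is at most a fixed positive constant'' — fixes $T^*$, and Proposition~\ref{prop: cp} delivers $u(\cdot+T^*,\cdot)\ge\underline u$ and $v(\cdot+T^*,\cdot)\le\bar v$ on $[T_0,\infty)\times[0,\infty)$.

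The differential inequalities having already been settled in Lemma~\ref{lm: sub sol inequality}, the two genuinely delicate points are: first, the lateral face $\{x=0\}$ is unbounded in time, which is exactly why the buffer rate $\mu_0$ must be chosen below the relaxation rates $k_1,k_2$ of $(u,v)$ towards $(1,0)$ on the cone — this is what Remark~\ref{rk: mu for subsol} makes legitimate; and second, matching the initial slice on the leading edge, where one must exploit the exponent gap $\alpha_0<-\lambda_u^-(c^*)$ so that the perturbation $-P\min\{W,1\}$ drives $\underline u$ negative on $[x_\ast,\infty)$ with $x_\ast$ \emph{independent of} $T^*$, thereby closing the gap between the reach $c(T_0+T^*)$ of the cone estimate and the region where $\underline u$ is manifestly dominated.
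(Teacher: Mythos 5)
Your proposal is correct and follows essentially the same route as the paper: the differential inequalities come from Lemma~\ref{lm: sub sol inequality}, the lateral face $\{x=0\}$ is handled by Lemma~\ref{lm: estimate of x=0} after taking $\mu_0<\min\{k_1,k_2\}$ via Remark~\ref{rk: mu for subsol}, and the initial slice is split into a region where $(u,v)$ is near $(1,0)$ and a far region where $\underline u\le 0$ and $\bar v\ge 1+q_0e^{-\mu_0 T_0}$ beats Lemma~\ref{lm: simple upper estimates}. The only (immaterial) difference is that you split the initial slice at the moving point $c(T_0+T^*)$ and reuse the cone estimate there, whereas the paper splits at a fixed $\ell$ and invokes the locally uniform convergence $(u,v)\to(1,0)$ on $[0,\ell]$.
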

\begin{proof}
Let the parameters satisfy the conditions in Lemma \ref{lm: sub sol inequality}, then we have
$$N_1[\underline{u},\bar{v}]\le 0 \ \ \mbox{and}\ \ N_2[\underline{u},\bar{v}]\ge 0\ \ \mbox{in}\ \ [T_0,\infty)\times[0,\infty).$$

Let us fix this $T_0>0$.
From Lemma \ref{lm: estimate of x=0}, by setting $\mu_0<\min\{k_1,k_2\}$ (see Remark~\ref{rk: mu for subsol}),
there exists $T_1\gg1$ such that
\beaa
&&\underline{u}(t,0)\le 1-p_0e^{-\mu_0 t}\le 1-C_1e^{-k_1(t+T_1)}\le u(t+T_1,0)\quad\text{for all}\quad t\geq T_0,\\
&&\bar{v}(t,0)\ge q_0e^{-\mu_0 t}\ge e^{-k_2(t+T_1)}\ge v(t+T_1,0)\quad\text{for all}\quad t\geq T_0.
\eeaa
Next, by the definition of $(\underline{u},\bar{v})$, condition (1) of Lemma \ref{lm: sub sol inequality} and Lemma \ref{lm: behavior around + infty}, we can choose $\ell\gg1$ such that
\beaa
\underline{u}(T_0,x)=0,\quad \bar{v}(T_0,x)\geq 1+\frac{q_0}{2} e^{-\mu_0 T_0}\quad \mbox{for all}\quad x\geq \ell.
\eeaa
Then,
by Lemma~\ref{lm: simple upper estimates}, we can take $T_2\gg1$ such that $u(T_0+T_2,x)\geq \underline{u}(T_0,x)$ and
$v(T_0+T_2,x)\leq \bar{v}(T_0,x)$ for all $x\geq \ell$.

For $x\in[0,\ell]$, we can choose $T_3\gg1$ such that
\beaa
&&\underline{u}(T_0,x)\leq 1-P(T_0)\min\{e^{-\alpha_0(x-c^* T_0 +x_0)},1\}\leq u(t+T_3,x)\quad\text{for all}\quad x\in[0,\ell],\\
&&v(T_0+T_3,x) \leq \min_{x\in[0,\ell]}\bar{v}(T_0,x)\leq \bar{v}(T_0,x)\quad \text{for all}\quad x\in[0,\ell],
\eeaa
since $(u,v)\to (1,0)$ as $t\to\infty$ uniformly for $x\in[0,\ell]$.

By the above discussion and setting $T^*=\max\{T_1,T_2,T_3\}$,
we can assert that
\beaa
&&u(T_0+T^*,x)\geq\underline{u}(T_0,x),\ v(T_0+T^*,x)\leq \bar{v}(T_0,x)\quad\text{for all}\quad  x\in[0,\infty),\\
&&u(t+T^*,0)\geq\underline{u}(t,0),\ v(t+T^*,0)\leq \bar{v}(t,0)\quad \text{for all}\quad t\geq T_0.
\eeaa
Therefore, by applying the comparison principle, the proof is complete.
\end{proof}

Furthermore, if we consider a sub-solution $(\underline{u}_*,\bar{v}_*)$ defined as
\begin{equation}\label{def of sub sol no ac x<0}
\left\{
\begin{aligned}
\underline{u}_*(t,x)&:=U(-x-c^*t+\zeta(t))-P(t)\min\{W(t,-x),1\},\\
\bar{v}_*(t,x)&:=V(-x-c^*t+\zeta(t))+Q(t),
\end{aligned}
\right.
\end{equation}
then, by repeating the above argument, we can obtain a lemma as follow:
\begin{lemma}\label{lm: sub sol boundary condition x<0 no ac}
Let $(\underline{u}_*,\bar{v}_*)$ be defined as \eqref{def of sub sol no ac x<0} with $\alpha_0$, $\mu_0$, $\tau_0$, $\zeta_0$, $x_0$, $p_0$, $q_0$ satisfying the conditions in Lemma \ref{lm: sub sol inequality}. Then there exist $T_0,T^{*}>0$ such that the solution $(u,v)$ with initial data \eqref{initial data}
satisfies
$$u(t+T^*,x)\ge\underline{u}_*(t,x),\ v(t+T^*,x)\le\bar{v}_*(t,x)\ \ \mbox{for all}\ \ (t,x)\in[T_0,\infty)\times(-\infty,0].$$
\end{lemma}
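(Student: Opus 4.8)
The plan is to obtain Lemma~\ref{lm: sub sol boundary condition x<0 no ac} from Lemma~\ref{lm: sub sol boundary condition no ac} (and from the construction of Section~\ref{section: Construction of sub solution no ac}) by the reflection $x\mapsto -x$, which is a symmetry of system~\eqref{system} because the diffusion is isotropic and the reaction terms $F,G$ contain no spatial derivatives. First I would record the elementary identity: writing $\underline{u},\bar{v}$ for the functions in~\eqref{def of  sub sol no ac} and recalling $W(t,x)=e^{-\alpha_0(x-c^*t+x_0)}$, a direct inspection of~\eqref{def of sub sol no ac x<0} gives $\underline{u}_*(t,x)=\underline{u}(t,-x)$ and $\bar{v}_*(t,x)=\bar{v}(t,-x)$ for all $(t,x)$. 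Since $\partial_t$ and $\partial_{xx}$ commute with the substitution $x\mapsto -x$ (and $F,G$ involve no $\partial_x$), it follows that $N_1[\underline{u}_*,\bar{v}_*](t,x)=N_1[\underline{u},\bar{v}](t,-x)$ and $N_2[\underline{u}_*,\bar{v}_*](t,x)=N_2[\underline{u},\bar{v}](t,-x)$. Hence Lemma~\ref{lm: sub sol inequality}, applied with exactly the same parameters $(\alpha_0,\mu_0,\tau_0,\zeta_0,x_0,p_0,q_0)$ and the same time $T\ge 0$, yields $N_1[\underline{u}_*,\bar{v}_*]\le 0$ and $N_2[\underline{u}_*,\bar{v}_*]\ge 0$ on $[T,\infty)\times\mathbb{R}$, in particular on $[T,\infty)\times(-\infty,0]$; fix a corresponding $T_0\ge T$.

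Next I would check the ordering of $(u,v)$ against $(\underline{u}_*,\bar{v}_*)$ on the parabolic boundary of $(T_0,\infty)\times(-\infty,0)$, the boundary piece at $x=-\infty$ being dropped by Proposition~\ref{prop: cp}. On the lateral boundary $\{x=0\}$ one has $\underline{u}_*(t,0)=\underline{u}(t,0)$ and $\bar{v}_*(t,0)=\bar{v}(t,0)$, so the required estimates are verbatim those in the proof of Lemma~\ref{lm: sub sol boundary condition no ac}: choosing $\mu_0<\min\{k_1,k_2\}$ as permitted by Remark~\ref{rk: mu for subsol} and invoking Lemma~\ref{lm: estimate of x=0} — whose conclusion, being stated on $[-ct,ct]$, is symmetric in $x$ and in particular holds at $x=0$ — gives some $T_1\gg 1$ with $\underline{u}_*(t,0)\le 1-p_0e^{-\mu_0t}\le u(t+T_1,0)$ and $\bar{v}_*(t,0)\ge q_0e^{-\mu_0t}\ge v(t+T_1,0)$ for all $t\ge T_0$. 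For the initial slice $\{t=T_0\}$, the argument is the mirror image of the one in Lemma~\ref{lm: sub sol boundary condition no ac}: using condition~(1) of Lemma~\ref{lm: sub sol inequality} and the $+\infty$-asymptotics of Lemma~\ref{lm: behavior around + infty} applied to $U(-x-c^*T_0+\zeta(T_0))$ as $-x\to+\infty$, pick $\ell\gg 1$ with $\underline{u}_*(T_0,x)\le 0$ and $\bar{v}_*(T_0,x)\ge 1+\tfrac{q_0}{2}e^{-\mu_0T_0}$ for $x\le -\ell$, and then Lemma~\ref{lm: simple upper estimates} supplies some $T_2\gg 1$ with $u(T_0+T_2,x)\ge\underline{u}_*(T_0,x)$ and $v(T_0+T_2,x)\le\bar{v}_*(T_0,x)$ there; on the compact set $x\in[-\ell,0]$ the local uniform convergence $(u,v)\to(1,0)$ from~\cite{Lewis Li Weinberger 1} gives some $T_3\gg 1$ with the same ordering. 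Taking $T^*=\max\{T_1,T_2,T_3\}$ and applying the comparison principle (Proposition~\ref{prop: cp}) on $(-\infty,0)$ then finishes the proof.

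I do not expect a genuine obstacle here: the construction in Section~\ref{section: Construction of sub solution no ac} was deliberately set up so that every differential inequality is insensitive to the orientation of $x$, and the lateral estimate at $x=0$ is literally unchanged. The one point requiring a word of care is the initial slice, since the Cauchy data $u_0$ in~\eqref{initial data} need not be even; but this is harmless because the comparison region $x\in[-\ell,0]$ near the origin is precisely where $(u,v)$ converges to $(1,0)$ regardless of any asymmetry, while for $x\le-\ell$ the mirrored sub-solution $\underline{u}_*(T_0,\cdot)$ is nonpositive and $\bar{v}_*(T_0,\cdot)$ exceeds $1$, so the lack of symmetry of $u_0$ is never actually tested.
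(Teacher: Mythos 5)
Your proposal is correct and follows exactly the route the paper intends: the paper gives no separate proof of this lemma beyond the remark "by repeating the above argument," and your reflection argument (using $\underline{u}_*(t,x)=\underline{u}(t,-x)$, $\bar{v}_*(t,x)=\bar{v}(t,-x)$, the invariance of $N_1,N_2$ under $x\mapsto -x$, and the symmetric-in-$x$ nature of Lemmas \ref{lm: simple upper estimates} and \ref{lm: estimate of x=0}) is precisely that repetition, carried out carefully. Your closing observation that the possible asymmetry of $u_0$ is never tested is a correct and worthwhile clarification.
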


\subsection{Construction of super-solution}\label{section: Construction of super solution no ac}
We look for a super-solution $(\bar{u},\underline{v})$  in the form of
\begin{equation}\label{def of super sol}
\left\{
\begin{aligned}
&\bar{u}(t,x):=U(x-c^*t-\zeta(t))+P(t)\min\{e^{-\alpha_1(x-c^*t+x_1)},1\},\\
&\underline{v}(t,x):=V(x-c^*t-\zeta(t))-Q(t),
\end{aligned}
\right.
\end{equation}
where $\zeta(t)=\zeta_1-e^{-\tau_1 t}$, $Q(t)=q_1e^{-\mu_1 t}$, $P(t)=p_1e^{-\mu_1 t}$.
The parameters $\alpha_1$, $\mu_1$, $\tau_1$, $p_1$, $q_1$, $\zeta_1$ are positive constants and will be determined later. For the simplicity, we denote $\xi:=x-c^*t-\zeta(t)$ and $W(t,x)=e^{-\alpha_1(x-c^*t+x_1)}$.  Next, we show that $(\bar{u},\underline{v})$ is a super-solution by choosing suitable parameters.


Clearly, there exists a curve $\Gamma(t)=c^*t-x_1:[0,\infty)\to\mathbb{R}$ such that $W(t,\Gamma(t))=1$.
Thus, it holds
$$W(t,x)\le 1\ \ \mbox{for}\ \ (t,x)\in S_1:=\{(t,x)\ |\ x\ge \Gamma(t)\},$$
$$W(t,x)\ge 1\ \ \mbox{for}\ \ (t,x)\in S_2:=\{(t,x)\ |\ x\le \Gamma(t)\}.$$
Then by some straightforward computation, we obtain that, for $(t,x)\in S_1$, it holds
\begin{equation}\label{N1 super sol inequality S1}
N_1[\bar{u},\underline{v}]=\big(-1-\mu_1+\alpha_1 c^*-\alpha_1^2+2U+a\underline{v}+PW\big)PW-\zeta'(t)U'-aQU.
\end{equation}
And for $(t,x)\in S_2$, it holds
\begin{equation*}\label{N1 super sol inequality S2}
N_1[\bar{u},\underline{v}]=\big(-1-\mu_1+2U+a\underline{v}+P\big)P-\zeta'(t)U'-aQU.
\end{equation*}
On the other hand, for $(t,x)\in [0,\infty)\times\mathbb{R}$ it holds
\begin{equation}\label{N2 super sol inequality}
N_2[\underline{u},\bar{v}]=-\zeta'V'+r(1-2V+Q-b\bar{u}+\frac{\mu_1}{r})Q+brPV\min\{W,1\}.
\end{equation}

In the following discussion, we choose $M>0$ sufficiently large and divide the whole space into three parts:
\begin{itemize}
	\item[(1)] $\Omega_1=\{ x\ge c^*t-\zeta(t)+ M\}$;
	\item[(2)] $\Omega_2:=\{c^*t-\zeta(t)-M\ge x\ge 0\}$;
	\item[(3)] $\Omega_3:=\{c^*t-\zeta(t)+M\ge x\ge ct-\zeta(t)-M\}$.
\end{itemize}

\noindent{\bf{Case 1}}: We consider $(t,x)\in\Omega_1$ with $M>0$ sufficiently large.
Then, for some small $\delta>0$,
\beaa
\Omega_1=\{(t,x)\ |\ 0\le U\le \delta,\ 1-\delta\le V\le 1\}.
\eeaa
Note that, by setting $x_1=\zeta_1+M$, then for sufficiently large $t$, we have $\Gamma(t)\subset\Omega_2$, and hence $\Omega_1\subset S_1$.

Since $\delta$ can be chosen arbitrarily small, by setting $\alpha_1$ as \eqref{sub sol inequality condition 1 no ac},
we have
$$-(\mu_1-c\alpha_1+\alpha_1^2+1-2U-a(V-Q)-PW)PW\ge C_0PW$$
with some $\mu_1>0$ and $C_0>0$. Moreover, by applying Lemma \ref{lm: behavior around + infty} and Lemma \ref{lm: behavior around - infty}, there exists $C_1>0$ such that \eqref{estimate for dU+} holds.
From \eqref{estimate for dU+} and  \eqref{N1 super sol inequality S1}, we can obtain that
\begin{equation*}
N_1[\bar{u},\underline{v}]\ge C_1\zeta_1\tau_1 e^{-\tau_1 t}U-C_0PW-aQU.
\end{equation*}
Therefore, by setting
\begin{equation}\label{super sol inequality condition 2}
\tau_1<\mu_1,
\end{equation}
it holds $N_1[\bar{u},\underline{v}]\ge 0$ for $(t,x)\in \Omega_1$ and $t\ge T$ for some $T\gg 1$.

Next, we deal with the inequality of $N_2[\bar{u},\underline{v}]$. Since $V'>0$ and $\zeta'>0$, from \eqref{N2 super sol inequality}, we have
\begin{equation*}\label{computation super sol N2}
N_2[\bar{u},\underline{v}]\le r(1-2V+Q+\frac{\mu_1}{r})Q+brVP\min\{W,1\}.
\end{equation*}
Since $Q\to 0$ uniformly  as $t\to\infty$ and $V\ge 1-\delta$ in $\Omega_1$,  by choosing $\mu_1<\frac{r}{2}$, it holds
$$N_2[\bar{u},\underline{v}]\le -\frac{r}{4}Q+brPW.$$
Then, by setting
\begin{equation}\label{super sol inequality condition 3}
\mu_1<\frac{r}{2}\ \ \mbox{and}\ \ \frac{q_1}{4}> bp_1e^{-\alpha_1(M-\zeta_1+x_1)}=bp_1e^{-2\alpha_1 M},
\end{equation}
it holds $N_2[\bar{u},\underline{v}]\le 0$ for $(t,x)\in \Omega_1$ and $t\ge T$ for some $T\gg 1$.


\bigskip
\noindent{\bf{Case 2}}: We consider $(t,x)\in\Omega_2$.
Then, for some small $\delta>0$, $\Omega_2=\{(t,x)\ |\ 1\ge U\ge 1-\delta,\ \delta\ge V\ge 0\}$.
Since $\zeta'>0$, $U'<0$ and $P\to 0$ uniformly as $t\to\infty$,  for $(t,x)\in \Omega_2\cap S_1$, from \eqref{N1 super sol inequality S1}, we have
\begin{equation*}
N_1[\bar{u},\underline{v}]\ge-(\mu_1-c\alpha_1+\alpha_1^2)PW-aQU.
\end{equation*}
Moreover, for $(t,x)\in \Omega_2\cap S_1$, we have $PW\ge p(t)e^{-\alpha_1(x_1-\zeta_1-M)}$. Therefore, by setting  $\alpha_1$ as \eqref{sub sol inequality condition 1 no ac} and
\begin{equation}\label{super sol inequality condition 4}
\mu_1<1-a\ \ \mbox{and}\ \ aq_1< C_2p_1e^{-\alpha_1(x_1-\zeta_1-M)}=C_2p_1,
\end{equation}
it holds $N_1[\bar{u},\underline{v}]\ge 0$ for $(t,x)\in S_1\cap\Omega_2$ and $t\ge T$ for some $T\gg 1$.

On the other hand, for $(t,x)\in\Omega_2\cap S_2$, we have
\begin{equation*}
N_1[\bar{u},\underline{v}]\ge-(\mu_1+1+P-2U)P-aQU\ge C_3P-aQU.
\end{equation*}
Therefore, by setting
\begin{equation}\label{super sol inequality condition 5}
\mu_1<1\ \ \mbox{and}\ \ aq_1< C_3p_1,
\end{equation}
it holds $N_1[\bar{u},\underline{v}]\ge 0$ for $(t,x)\in S_2\cap\Omega_2$ and $t\ge T$ for some $T\gg 1$.

Next, we will deal with the inequality of $N_2[\bar{u},\underline{v}]$. To verify $N_2[\bar{u},\underline{v}]\le 0$, we first observe that, there exists $C_4>0$ such that $-\zeta'V'\le -C_4\zeta'V$.
Then, from \eqref{N2 super sol inequality}, we have
$$N_2[\bar{u},\underline{v}]\le -C_4\zeta'V+r(1-bU+Q+\frac{\mu}{r})Q+brPV.$$
Thus, by choosing $\mu_1<r(b-1)$, since $P\to 0$ uniformly as $t\to\infty$, we have
$$N_2[\bar{u},\underline{v}]\le -C_4\zeta'V+brPV.$$
Therefore, by setting
\begin{equation}\label{super sol inequality condition 6}
\tau_1<\mu_1<r(b-1),
\end{equation}
it holds $N_2[\bar{u},\underline{v}]\le 0$ for $(t,x)\in \Omega_2$ and $t\ge T$ for some $T\gg 1$.

\bigskip
\noindent{\bf{Case 3}}: We consider $(t,x)\in\Omega_3$.
Then, for some small $\delta_1,\delta_2>0$, $\Omega_3=\{(t,x)\ |\ 1-\delta_1\ge U\ge \delta_1,\ 1-\delta_2\ge V\ge \delta_2\}$. In this range, there exists $C_5>0$ such that $U'\le-C_5$, which implies that $-\zeta'U'\ge C_5\zeta'$.  Therefore, we have
\begin{equation*}
N_1[\bar{u},\underline{v}]\ge C_5\zeta'-C_6P-aQU.
\end{equation*}
Then, for $\tau_1$ and $\mu_1$ satisfying \eqref{super sol inequality condition 2},
it holds $N_1[\bar{u},\underline{v}]\ge 0$ for $(t,x)\in \Omega_3$ and $t\ge T$ for some $T\gg 1$.

Next, we will deal with the inequality of $N_2[\bar{u},\underline{v}]$. We observe that, in this range, we have $V'\ge C_7>0$, which implies that $-\zeta'V'\le -C_7\zeta'$. Therefore, we have
$$N_2[\bar{u},\underline{v}]\le -C_7\zeta'+C_8Q+brP.$$
Similarly, for $\tau_1$ and $\mu_1$ satisfying \eqref{super sol inequality condition 2},
it holds $N_2[\bar{u},\underline{v}]\le 0$ for $(t,x)\in \Omega_3$ and $t\ge T$ for some $T\gg1$.

By concluding the conditions \eqref{sub sol inequality condition 1 no ac}, \eqref{super sol inequality condition 2}, \eqref{super sol inequality condition 3}, \eqref{super sol inequality condition 4}, \eqref{super sol inequality condition 5}, \eqref{super sol inequality condition 6} provided from the discussion above, we get a key lemma as follow:
\begin{lemma}\label{lm: super sol inequality}
For any $\alpha_1, \mu_1, \tau_1, x_1, \zeta_1>0$ satisfying
\begin{itemize}
	\item[(1)] $\alpha_1\in(-\lambda_u^+(c^*),-\lambda_u^-(c^*))$,
	\item[(2)] $\tau_1<\mu_1<\min\{1-a,r(b-1),\frac{r}{2}\}$,
	\item[(3)] $x_1-\zeta_1$ sufficiently large,
\end{itemize}
then there exists $p_1>0$, $q_1>0$ and $T\ge 0$ such that
$$N_1[\bar{u},\underline{v}]\ge 0 \ \ \mbox{and}\ \ N_2[\bar{u},\underline{v}]\le 0\ \ \mbox{in}\ \ [T,\infty)\times\mathbb{R},$$
where $(\bar{u},\underline{v})$ is defined as \eqref{def of  super sol}.
\end{lemma}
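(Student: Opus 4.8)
The plan is to plug the ansatz \eqref{def of super sol} into the operators $N_1,N_2$ and to verify $N_1[\bar u,\underline v]\ge 0$ and $N_2[\bar u,\underline v]\le 0$ for all large $t$ by a region-by-region analysis. Since $(c^*,U,V)$ solves \eqref{tw solution}, after the substitution all principal terms cancel and one is left --- as in \eqref{N1 super sol inequality S1} and \eqref{N2 super sol inequality} --- with three kinds of contributions: the shift term $-\zeta'(t)U'$ (resp.\ $-\zeta'(t)V'$), which has the \emph{favorable} sign because $\zeta'(t)=\tau_1 e^{-\tau_1 t}>0$ while $U'<0$ and $V'>0$; the exponential-tail term carrying the factor $PW=p_1e^{-\mu_1 t}e^{-\alpha_1(x-c^*t+x_1)}$; and lower-order perturbation terms of size $O(e^{-\mu_1 t})$. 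For each fixed $t$ I would split $\mathbb{R}$ according to $\xi:=x-c^*t-\zeta(t)$ into $\Omega_1=\{\xi\ge M\}$, $\Omega_2=\{\xi\le -M\}$, $\Omega_3=\{|\xi|\le M\}$ with $M>0$ large, taking $x_1=\zeta_1+M$ so that $\Omega_1\subset S_1$.

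In the leading edge $\Omega_1$ ($U$ uniformly small, $V$ uniformly near $1$) the sign of the tail term is the crux. Letting $U\to 0$, $\underline v\to 1$, $PW\to 0$ in the bracket multiplying $PW$ in \eqref{N1 super sol inequality S1} gives $-\big(\alpha_1^2-c^*\alpha_1+(1-a)\big)-\mu_1$; because $\alpha_1$ lies strictly between the two roots $-\lambda_u^+(c^*)<-\lambda_u^-(c^*)$ of $\alpha\mapsto\alpha^2-c^*\alpha+(1-a)$ (condition \eqref{sub sol inequality condition 1 no ac}, a nonempty interval under {\bf(H2)}), this quadratic is strictly negative, so the bracket is $\ge C_0>0$ once $\mu_1$ is small. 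Combining this with $U'\le -C_1 U$ (from Lemma~\ref{lm: behavior around + infty}, so $-\zeta'U'\ge 0$) reduces matters to a competition between $QU$ and $PW$ near $\xi=M$, settled by choosing $q_1$ suitably relative to $p_1$. For $N_2\le 0$ the $Q$-coefficient in \eqref{N2 super sol inequality} is $\approx-rQ<0$ and $-\zeta'V'\le 0$, so the only adverse term $brPV\min\{W,1\}\le brPW$ with $W\le e^{-2\alpha_1 M}$ on $\Omega_1$ is killed by imposing $q_1/4>bp_1e^{-2\alpha_1 M}$, which is possible once $M$ is large; this is exactly \eqref{super sol inequality condition 2}--\eqref{super sol inequality condition 3}.

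In the trailing edge $\Omega_2$ ($U\approx 1$, $V\approx 0$) I would further split into $\Omega_2\cap S_1$ and $\Omega_2\cap S_2$. On $S_1$ the lower bound for $N_1$ has $PW$-coefficient $-(\mu_1-c^*\alpha_1+\alpha_1^2)=\alpha_1 c^*-\alpha_1^2-\mu_1>0$ (using $\mu_1<1-a$ and $\alpha_1$ in the spectral gap), and $W$ is bounded below by a positive constant there, so $PW$ absorbs $aQU$ provided $p_1$ is large relative to $q_1$; on $S_2$, where $\min\{W,1\}=1$, one argues likewise, yielding \eqref{super sol inequality condition 4}--\eqref{super sol inequality condition 5}. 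The resulting constraints $q_1>4bp_1e^{-2\alpha_1 M}$ and $aq_1<\min\{C_2,C_3\}p_1$ are compatible precisely because $M=x_1-\zeta_1$ is large. For $N_2\le 0$ in $\Omega_2$, Lemma~\ref{lm: behavior around - infty} gives $-\zeta'V'\le -C_4\zeta'V$; keeping the factor $V$ on the bad term $brPV$ and using $\tau_1<\mu_1$ yields $-C_4\zeta'V+brPV\le 0$ for large $t$, which is \eqref{super sol inequality condition 6}. In the bulk $\Omega_3$, $U,V$ are bounded away from $0$ and $1$, so $U'\le-C_5<0$ and $V'\ge C_7>0$ uniformly; hence $-\zeta'U'\ge C_5\tau_1 e^{-\tau_1 t}$ dominates the $O(e^{-\mu_1 t})$ perturbation terms (again by $\tau_1<\mu_1$), and symmetrically $-\zeta'V'\le -C_7\tau_1 e^{-\tau_1 t}$ beats its competitors. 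Taking $T$ to be the maximum of the finitely many thresholds from the three regions and intersecting the parameter conditions --- which collapse to (1), (2), (3) of the statement together with a nonempty window for $q_1/p_1$ --- finishes the proof.

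I expect the main obstacle to be the leading-edge region $\Omega_1$: there the forward-shift mechanism that drives $\Omega_3$ is essentially inert (since $U'\to 0$), and the correct sign must instead be extracted from the exponential-tail term. This works only thanks to the pushed-front spectral inequality $\alpha_1^2-c^*\alpha_1+(1-a)<0$ on $(-\lambda_u^+(c^*),-\lambda_u^-(c^*))$ together with the sharp decay rate $U(\xi)\sim l_1 e^{\lambda_u^-(c^*)\xi}$ of Lemma~\ref{lm: behavior around + infty}; one must simultaneously thread the constraints relating $q_1$ and $p_1$, which become compatible only after enlarging $M=x_1-\zeta_1$.
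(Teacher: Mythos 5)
Your proposal is correct and follows essentially the same route as the paper: the same $\Omega_1,\Omega_2,\Omega_3$ and $S_1,S_2$ decomposition, the same spectral-gap sign argument for the $PW$-coefficient, the same $q_1/p_1$ window made compatible by enlarging $M=x_1-\zeta_1$, and the same use of $\tau_1<\mu_1$. The only (harmless) deviation is in $\Omega_1$ for $N_1$, where you absorb $aQU$ into the favorable $PW$ term, whereas the paper absorbs it into the shift term $-\zeta'U'\ge C_1\tau_1e^{-\tau_1t}U$ via $\tau_1<\mu_1$; both work since $W$ decays strictly slower than $U$ there.
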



\begin{lemma}\label{lm: super sol boundary condition}
Let $(\bar{u},\underline{v})$ be defined as \eqref{def of  super sol},
and $\alpha_1$, $\mu_1$, $\tau_1$, $\zeta_1$, $x_1$, $p_1$, $q_1$ satisfies the conditions in Lemma \ref{lm: super sol inequality}. Then there exist $T^{**}>0$ such that the solution $(u,v)$ with initial data \eqref{initial data}
satisfies
$$u(t,x)\le\bar{u}(t+T^{**},x),\ v(t,x)\ge\underline{v}(t+T^{**},x)\quad\mbox{for all}\quad (t,x)\in[0,\infty)\times[0,\infty).$$
\end{lemma}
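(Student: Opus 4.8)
The proof is the super-solution analogue of Lemma~\ref{lm: sub sol boundary condition no ac}, and I would organise it in the same three steps. \emph{Step~1.} Fix $\alpha_1,\mu_1,\tau_1,x_1,\zeta_1$ as in Lemma~\ref{lm: super sol inequality}, which then furnishes $p_1,q_1>0$ and $T\ge0$ with $N_1[\bar{u},\underline{v}]\ge0$ and $N_2[\bar{u},\underline{v}]\le0$ on $[T,\infty)\times\mathbb{R}$; since \eqref{system} is autonomous in $t$, for any $T^{**}\ge T$ and any $T_0\ge0$ the pair $(t,x)\mapsto\big(\bar{u}(t+T^{**},x),\underline{v}(t+T^{**},x)\big)$ is again a super-solution of \eqref{system} on $(T_0,\infty)\times(0,\infty)$. \emph{Step~2.} Verify that this pair lies above $(u,v)$ on the parabolic boundary $\{t=T_0\}\cup\{x=0\}$ of that quarter-plane (the edge $x=+\infty$ is omitted by Proposition~\ref{prop: cp}). \emph{Step~3.} Apply the comparison principle.

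For the edge $\{x=0\}$ the driving mechanism is the behaviour of $(U,V)$ at $-\infty$. Since $\zeta(s)<\zeta_1$, the shifted argument $-c^*s-\zeta(s)\to-\infty$, and Lemma~\ref{lm: behavior around - infty} gives, for $\xi\le0$, the bounds $0<1-U(\xi)\le C(1+|\xi|)e^{\nu\xi}$ and $0<V(\xi)\le Ce^{\mu_v^+(c^*)\xi}$, with $\nu:=\min\{\mu_u^+(c^*),\mu_v^+(c^*)\}>0$. Once $s$ is large one has $(s,0)\in S_2$, so $\bar{u}(s,0)=U(-c^*s-\zeta(s))+p_1e^{-\mu_1 s}$ and $\underline{v}(s,0)=V(-c^*s-\zeta(s))-q_1e^{-\mu_1 s}$. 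I would therefore impose the extra smallness condition $\mu_1<c^*\nu$, which is compatible with Lemma~\ref{lm: super sol inequality} (one shrinks $p_1,q_1$ accordingly, in the spirit of Remark~\ref{rk: mu for subsol}); then for $T^{**}$ large the bump $p_1e^{-\mu_1 s}$ dominates $1-U(-c^*s-\zeta(s))$ and $q_1e^{-\mu_1 s}$ dominates $V(-c^*s-\zeta(s))$ for all $s\ge T^{**}$, whence $\bar{u}(t+T^{**},0)\ge1\ge u(t,0)$ and $\underline{v}(t+T^{**},0)\le0<v(t,0)$ for all $t\ge0$ (here $u\le1$ by comparison with the logistic equation, cf.\ Lemma~\ref{lm: simple upper estimates}, and $v>0$ since $v_0\ge m>0$; Lemma~\ref{lm: estimate of x=0} gives a quantitative alternative). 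The same estimate disposes of $\{t=T_0\}$ on the fixed compact set $\supp u_0$, where $\bar{u}(T^{**},x)\ge1\ge u_0(x)$, while off $\supp u_0$ one has $u_0\equiv0\le\bar{u}(T^{**},\cdot)$.

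The genuinely new point — and the step I expect to be the main obstacle — is the lower bound $v\ge\underline{v}(\cdot+T^{**},\cdot)$ far ahead of the front: since $\underline{v}(s,x)\to1-q_1e^{-\mu_1 s}$ as $x\to+\infty$, a value close to $1$ for $s$ large, it cannot be dominated at $t=0$ by $v_0$, which is only assumed to satisfy $v_0\ge m>0$, possibly with $m\ll1$; one therefore \emph{cannot} initialise the comparison at $t=0$. (No such asymmetry occurs in Lemma~\ref{lm: sub sol boundary condition no ac}, where $\bar{v}=V+Q$ automatically dominates $1\ge v$.) Instead I would initialise at a time $T_0$ after which $v$ has recovered ahead of the invasion. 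The spreading speed of $u$ is $c^*$ (Lewis--Li--Weinberger \cite{Lewis Li Weinberger 1}), so for $x\ge(c^*+1)t+L$ with $L$ fixed large $u(t,x)$ is exponentially small, whence $v$ obeys there a near-logistic equation and $1-v(t,x)\le C(m)e^{-\kappa t}$ for some $\kappa>0$; imposing additionally $\mu_1<\kappa$, for $T_0$ large $q_1e^{-\mu_1(T_0+T^{**})}$ dominates $C(m)e^{-\kappa T_0}$, which gives $\underline{v}(T_0+T^{**},x)\le v(T_0,x)$ on that region, while on the complementary range $0\le x\le(c^*+1)T_0+L$ the shifted argument $x-c^*(T_0+T^{**})-\zeta(T_0+T^{**})$ is very negative once $c^*T^{**}$ exceeds $T_0$ by enough, so $V(\cdot)<q_1e^{-\mu_1(T_0+T^{**})}$ there and $\underline{v}(T_0+T^{**},x)<0<v(T_0,x)$. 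The delicate bookkeeping is that $T_0$ is forced to lie in a window $[c_1(\mu_1)T^{**},c_2(\mu_1)T^{**}]$ (up to additive constants) whose endpoints satisfy $c_1(\mu_1)<c_2(\mu_1)$ precisely because $\mu_1$ is small; so one chooses $\mu_1$ small, then $T^{**}$ large, then $T_0$ in this window (and $\ge T$). With the parabolic-boundary inequalities established, Step~3 finishes the proof.
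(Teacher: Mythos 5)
Your proposal follows the same route as the paper on the two main points: you compare on the parabolic boundary of the quarter-plane $[T_0,\infty)\times[0,\infty)$, and on the edge $\{x=0\}$ you use Lemma~\ref{lm: behavior around - infty} together with the extra smallness condition $\mu_1<c^*\min\{\mu_u^+(c^*),\mu_v^+(c^*)\}$ so that the perturbations $P,Q$ dominate $1-U$ and $V$ along $x=0$; this is exactly the paper's argument (the paper imposes $\mu_1<\min\{1,c^*\mu_u^+(c^*),c^*\mu_v^+(c^*)\}$ and then gets $\bar{u}(t+T^{**},0)\ge1\ge u(t,0)$ and $\underline{v}(t+T^{**},0)\le0\le v(t,0)$). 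Where you genuinely diverge is the initial-time comparison for $v$: the paper disposes of $u(0,\cdot)\le\bar{u}(T^{**},\cdot)$ and $v(0,\cdot)\ge\underline{v}(T^{**},\cdot)$ in one line, ``up to increasing $\zeta_1$,'' whereas you observe that no shift can make $\underline{v}(T^{**},x)\to 1-q_1e^{-\mu_1T^{**}}$ (as $x\to+\infty$) fall below a lower bound $m$ of $v_0$ when $m$ is small, and you therefore initialise the comparison at a later time $T_0$ after $v$ has recovered to within $O(e^{-\kappa t})$ of $1$ ahead of the $u$-front, with the additional constraint $\mu_1<\kappa$ and the bookkeeping between $T_0$ and $T^{**}$. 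This is a legitimate and arguably necessary repair of the far-field initial comparison that the paper's one-line assertion does not address; the price is that your conclusion holds for $t\ge T_0$ rather than $t\ge 0$, which matches the form of the sub-solution counterpart (Lemma~\ref{lm: sub sol boundary condition no ac}) and is all that is used later (only large-time behaviour enters Lemma~\ref{lm: lower and upper esitimates of tilde u v} and the proof of Theorem~\ref{th: profile}). The remaining soft spot in your sketch is the uniform exponential recovery $1-v\le C(m)e^{-\kappa t}$ on the moving region $\{x\ge(c^*+1)t+L\}$: this does not follow from the spreading-speed result alone and needs an argument of the type used in Lemma~\ref{lm: decay estimamte u and v}(ii) (the Kaneko--Matsuzawa Green-function estimate on an expanding domain), so you should either invoke that machinery explicitly or be content with the weaker, but sufficient, statement that $1-v(T_0,\cdot)$ is uniformly small on that region for $T_0$ large.
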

\begin{proof}
First, by setting the parameters as that in Lemma \ref{lm: super sol inequality}, we have
$$N_1[\bar{u},\underline{v}]\ge 0 \ \ \mbox{and}\ \ N_2[\bar{u},\underline{v}]\le 0\ \ \mbox{in}\ \ [T^{**},\infty)\times[0,\infty).$$
Let us fix this $T^{**}>0$.

Then, from Lemma \ref{lm: behavior around - infty} and Lemma \ref{lm: simple upper estimates}, for any $\mu_1<\min\{1,c^*\mu_u^+(c^*),c^*\mu_v^+(c^*)\}$,
there exists $\zeta_1>0$ such that for $t\ge 0$, we have
$$\bar{u}(t+T^{**},0)=U(-c^*(t+T^{**})-\zeta(t+T^{**}))+P(t+T^{**})\ge 1\ge u(t,0),$$
$$\underline{v}(t+T^{**},0)=V(-c^*(t+T^{**})+\zeta(t+T^{**}))-Q(t+T^{**})\le 0\le v(t,0).$$

Next, we fix this $\mu_1$. Then, from the construction of $(\bar{u},\underline{v})$ and \eqref{initial data}, up to increasing $\zeta_1$ if it is necessary, we have
\begin{equation*}
u(0,x)\le\bar{u}(T^{**},x)\ \ \mbox{and}\ \ v(0,x)\ge\underline{v}(T^{**},x)\quad\mbox{for all}\quad x\in[0,\infty).
\end{equation*}
Therefore, by applying the comparison principle, the proof is complete.
\end{proof}

Furthermore, if we consider a sub-solution $(\bar{u}_*,\underline{v}_*)$ defined as
\begin{equation}\label{def of super sol no ac x<0}
\left\{
\begin{aligned}
\bar{u}_*(t,x)&:=U(-x-c^*t+\zeta(t))+P(t)\min\{W(t,-x),1\},\\
\underline{v}_*(t,x)&:=V(-x-c^*t+\zeta(t))-Q(t),
\end{aligned}
\right.
\end{equation}
then, by repeating the above argument, we can obtain a lemma as follow:
\begin{lemma}\label{lm: super sol boundary condition x<0 no ac}
Let $(\bar{u}_*,\underline{v}_*)$ be defined as \eqref{def of super sol no ac x<0}  and $\alpha_1$, $\mu_1$, $\tau_1$, $\zeta_1$, $x_1$, $p_1$, $q_1$ satisfy the conditions in Lemma \ref{lm: super sol inequality}. Then there exists $T^{**}>0$ such that the solution $(u,v)$ with initial data \eqref{initial data}
satisfies
$$u(t,x)\le\bar{u}_*(t+T^{**},x),\ v(t,x)\ge\underline{v}_*(t+T^{**},x)\ \ \mbox{for all}\ \ (t,x)\in[0,\infty)\times(-\infty,0].$$
\end{lemma}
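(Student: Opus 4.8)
The plan is to derive this lemma by reflecting the entire super-solution construction of subsection~\ref{section: Construction of super solution no ac} through the point $x=0$, so that the half-line $(-\infty,0]$ plays the role that $[0,\infty)$ played in Lemma~\ref{lm: super sol boundary condition}. The structural fact that makes this work is that the operators $N_1$ and $N_2$, and hence the very notion of a super-solution, are invariant under $x\mapsto -x$: the operator $\partial_{xx}$ is even and the nonlinearities $F,G$ in \eqref{FG} do not depend on $x$. Since $(\bar{u}_*,\underline{v}_*)$ is obtained from the pair $(\bar{u},\underline{v})$ of subsection~\ref{section: Construction of super solution no ac} by the substitution $x\mapsto -x$, it satisfies the differential inequalities of that subsection read at $-x$; consequently Lemma~\ref{lm: super sol inequality}, applied with parameters obeying its hypotheses, produces $T^{**}>0$ such that $N_1[\bar{u}_*,\underline{v}_*]\ge 0$ and $N_2[\bar{u}_*,\underline{v}_*]\le 0$ hold on $[T^{**},\infty)\times\mathbb{R}$, in particular on $[T^{**},\infty)\times(-\infty,0]$.

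Next I would verify the hypotheses of the comparison principle, Proposition~\ref{prop: cp}, on the domain $(0,\infty)\times(-\infty,0)$ for the time-shifted pair $(\bar{u}_*(\cdot+T^{**},\cdot),\underline{v}_*(\cdot+T^{**},\cdot))$ against the solution $(u,v)$. The lateral boundary $x=-\infty$ may be omitted, so only the face $\{x=0\}$ and the initial line $\{t=0\}$ require attention. On $\{x=0\}$ the reflection acts trivially, so $\bar{u}_*(t,0)=\bar{u}(t,0)$ and $\underline{v}_*(t,0)=\underline{v}(t,0)$, and the boundary estimate already obtained in the proof of Lemma~\ref{lm: super sol boundary condition} carries over verbatim: choosing $\mu_1<\min\{1,c^*\mu_u^+(c^*),c^*\mu_v^+(c^*)\}$ and then $\zeta_1$ large, Lemmas~\ref{lm: behavior around - infty} and \ref{lm: simple upper estimates} yield $\bar{u}_*(t+T^{**},0)\ge 1\ge u(t,0)$ and $\underline{v}_*(t+T^{**},0)\le 0\le v(t,0)$ for all $t\ge 0$. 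On $\{t=0\}$ one needs $u_0(x)\le\bar{u}_*(T^{**},x)$ and $v_0(x)\ge\underline{v}_*(T^{**},x)$ for $x\le 0$; since $u_0$ has compact support and $v_0$ is bounded below by a positive constant, this is the mirror image of the initial comparison performed in Lemma~\ref{lm: super sol boundary condition}, and it follows from the wave asymptotics in Lemmas~\ref{lm: behavior around + infty}--\ref{lm: behavior around - infty} after enlarging $\zeta_1$ (and, if necessary, scaling the amplitudes $p_1,q_1$ up jointly, which remains compatible with the parameter constraints of subsection~\ref{section: Construction of super solution no ac}). Proposition~\ref{prop: cp} then gives $u(t,x)\le\bar{u}_*(t+T^{**},x)$ and $v(t,x)\ge\underline{v}_*(t+T^{**},x)$ on $[0,\infty)\times(-\infty,0]$, which is the assertion.

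The one point that calls for care --- more a matter of bookkeeping than a genuine obstacle --- is arranging $\zeta_1$ (together with the amplitudes $p_1,q_1$) large enough that the $t=0$ comparison is valid on the \emph{whole} half-line $x\le 0$ while the $x=0$ comparison remains valid for \emph{all} $t\ge 0$; this couples the exponents $\mu_1,\alpha_1,\tau_1$ to the placement parameters $\zeta_1,x_1$ exactly as in subsection~\ref{section: Construction of super solution no ac} and the proof of Lemma~\ref{lm: super sol boundary condition} (in particular the constraint $\mu_1<c^*\mu_v^+(c^*)$ must be respected), so no new difficulty arises.
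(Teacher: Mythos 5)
Your proposal matches the paper's treatment: the paper gives no separate proof of this lemma, asserting only that it follows ``by repeating the above argument,'' i.e., by the reflection $x\mapsto -x$ under which the operators $N_1,N_2$ are invariant, which is exactly the mechanism you spell out. Your handling of the differential inequalities via Lemma~\ref{lm: super sol inequality}, the boundary comparison at $x=0$, and the mirrored initial comparison reproduces the proof of Lemma~\ref{lm: super sol boundary condition} on the left half-line, so it is a faithful (indeed more detailed) version of the intended argument.
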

\subsection{Proof of Theorem \ref{th: profile}}\label{subsection:thm1}
Let us set $\xi:=x-c^*t$. Then we can write the solution of system \eqref{system} as
\begin{equation}\label{def of tilde u v}
(\tilde{u},\tilde{v})(t,\xi)=(u,v)(t,x)=(u,v)(t,\xi+c^*t),\ \ t> 0,\ \xi\in\mathbb{R},
\end{equation}
which satisfies
\begin{equation*}\label{system moving frame}
\left\{
\begin{aligned}
&\partial_t\tilde{u}=\tilde{u}_{\xi\xi}+c^*\tilde{u}_{\xi}+\tilde{u}(1-\tilde{u}-a\tilde{v}),\ \ t>0,\ \xi\in\mathbb{R},\\
&\partial_t\tilde{v}=d\tilde{v}_{\xi\xi}+c^*\tilde{v}_{\xi}+r\tilde{v}(1-\tilde{v}-b\tilde{u}),\ \ t>0,\ \xi\in\mathbb{R}.
\end{aligned}
\right.
\end{equation*}
Thanks to Lemma \ref{lm: sub sol boundary condition no ac} and Lemma \ref{lm: super sol boundary condition}, we can immediately obtain the following result.
\begin{lemma}\label{lm: lower and upper esitimates of tilde u v}
Let $(c^*,U,V)$ be the minimal traveling wave of system \eqref{tw solution}. Then there exist constants $p_2$, $q_2$, $\alpha_2$, $\mu_2$, $x_2$, $\zeta_2$ and $K_i(i=1,2,3,4)$, and $T>0$ such that
\begin{equation*}\label{lower and upper esitimates of tilde u v}
\left\{
\begin{aligned}
&\tilde{u}(t,\xi)\ge U\left(\xi+\zeta_2-K_1e^{-\frac{\mu_2}{2}t}\right)-K_2p_2e^{-\mu_2 t}\min\{e^{-\alpha_2(\xi+x_2)},1\},\\
&\tilde{u}(t,\xi)\le U\left(\xi-\zeta_2+K_3e^{-\frac{\mu_2}{2}t}\right)+K_4p_2e^{-\mu_2 t}\min\{e^{-\alpha_2(\xi+x_2)},1\},\\
&\tilde{v}(t,\xi)\le V\left(\xi+\zeta_2-K_1e^{-\frac{\mu_2}{2}t}\right)+K_2q_2e^{-\mu_2 t},\\
&\tilde{v}(t,\xi)\ge V\left(\xi-\zeta_2+K_3e^{-\frac{\mu_2}{2}t}\right)-K_4q_2e^{-\mu_2 t}.
\end{aligned}
\right.
\end{equation*}
for $\xi\ge-c^* t$ and $t\ge T$.
\end{lemma}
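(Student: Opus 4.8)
The plan is to combine the sub-solution and super-solution estimates already established. Recall that Lemma~\ref{lm: sub sol boundary condition no ac} gives a sub-solution $(\underline{u},\bar v)$ defined in \eqref{def of  sub sol no ac} with $\underline u(t,x)\le u(t+T^*,x)$ and $\bar v(t,x)\ge v(t+T^*,x)$ on $[T_0,\infty)\times[0,\infty)$, while Lemma~\ref{lm: super sol boundary condition} gives a super-solution $(\bar u,\underline v)$ with $u(t,x)\le\bar u(t+T^{**},x)$ and $v(t,x)\ge\underline v(t+T^{**},x)$ on $[0,\infty)\times[0,\infty)$. First I would unify these on a common time-shifted frame: replacing $t$ by $t-T^*$ (resp.\ $t+T^{**}$) and using the moving-frame variable $\xi=x-c^*t$ of \eqref{def of tilde u v}, the four inequalities become bounds on $\tilde u,\tilde v$ by the four functions appearing in Lemma~\ref{lm: lower and upper esitimates of tilde u v}. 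The time shifts in the arguments of $U$ and $V$ are of the form $\zeta(t\pm T^\sharp)=\zeta_i-e^{-\tau_i(t\pm T^\sharp)}$, which is $\zeta_i + O(e^{-\tau_i t})$, and the perturbation terms $P(t\pm T^\sharp)=p_ie^{-\mu_i(t\pm T^\sharp)}$ are $O(e^{-\mu_i t})$; absorbing all the constant factors $e^{\pm\mu_i T^\sharp}$, $e^{\pm\tau_i T^\sharp}$ into fresh constants $K_1,\dots,K_4$ produces exactly the stated form.

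Next I would reconcile the various parameters. The sub-solution uses $(\alpha_0,\mu_0,\tau_0,x_0,\zeta_0,p_0,q_0)$ subject to Lemma~\ref{lm: sub sol inequality}, and the super-solution uses $(\alpha_1,\mu_1,\tau_1,x_1,\zeta_1,p_1,q_1)$ subject to Lemma~\ref{lm: super sol inequality}; since both admissible sets are open and overlapping (both require $\alpha_i\in(-\lambda_u^+(c^*),-\lambda_u^-(c^*))$ and $\tau_i<\mu_i<\min\{1-a,r(b-1),r/2\}$), I can simply choose a single quadruple $(\alpha_2,\mu_2,\tau_2,x_2)$ satisfying both, then pick $\zeta_2:=\max\{\zeta_0,\zeta_1\}$ (or handle the two $\zeta$'s by separate constants — the statement only asks for the existence of \emph{some} constants, so no sharpness is needed here) and $p_2,q_2$ accordingly. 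The exponent $\tau_0$ in $\zeta(t)=\zeta_0-e^{-\tau_0 t}$ can be taken equal to $\mu_2/2$ by further shrinking $\tau_0$ if necessary — this is harmless because Lemma~\ref{lm: sub sol inequality}(2) only imposes $\tau_0<\mu_0$ — which is what yields the clean $e^{-\mu_2 t/2}$ appearing inside the arguments of $U$ and $V$ in the statement. The restriction $\xi\ge -c^*t$ is exactly $x\ge 0$, which is the half-line on which Lemmas~\ref{lm: sub sol boundary condition no ac} and \ref{lm: super sol boundary condition} were proved.

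I do not expect a serious obstacle here — the lemma is essentially a bookkeeping corollary that repackages the sub/super-solution comparisons of the previous two subsections into a single four-sided sandwich on $\{\xi\ge -c^*t,\ t\ge T\}$. The only mildly delicate point is making sure the \emph{same} $T$ works for all four inequalities: one takes $T:=\max\{T_0+T^*,\,T^{**},\,(\text{time after which Lemma~\ref{lm: estimate of x=0} and the exponential-smallness estimates hold})\}$, invoking Remark~\ref{rk: mu for subsol} to guarantee that shrinking $\mu_0$ (to align it with $\mu_2$ and with $\min\{k_1,k_2\}$ from Lemma~\ref{lm: estimate of x=0}) does not force $T$ to grow. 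With these choices the four displayed inequalities follow directly from the comparison principle (Proposition~\ref{prop: cp}) applied on the quarter-plane $\{t\ge T,\ x\ge 0\}$, and the proof is complete. If one wants to make the constants explicit rather than merely asserting their existence, the cleanest route is to absorb every $T$-dependent factor at the very end and state the result with unnamed constants $K_i$, exactly as written.
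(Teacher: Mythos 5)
Your proposal is correct and follows exactly the route the paper takes: the paper states this lemma as an immediate consequence of Lemma~\ref{lm: sub sol boundary condition no ac} and Lemma~\ref{lm: super sol boundary condition}, and your write-up simply supplies the bookkeeping (passing to the moving frame, absorbing the time shifts $T^*$, $T^{**}$ into the constants $K_i$, aligning $\tau_0=\tau_1=\mu_2/2$, and taking the worse of the two $\zeta$'s) that the paper leaves implicit.
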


From the construction of sub-solution and super-solution, we actually establish the local stability of traveling waves
in the following sense:

\begin{lemma}\label{lem:asymptotic stability of tw}
Let $(c^*,U,V)$ be a solution of \eqref{tw solution}. Then there exists a function $\nu(\varepsilon)$ defined for small $\varepsilon$ with $\nu(\varepsilon)\to 0$ as $\varepsilon\to 0$ satisfying the following property: if
\begin{equation*}
\left|\tilde{u}(t_*,\xi)-U(\xi-\xi_*)\right|+\left|\tilde{v}(t_*,\xi)-V(\xi-\xi_*)\right|<\varepsilon\quad\mbox{for all}\quad\xi\in\mathbb{R},
\end{equation*}
for some $t_*,\xi_*\in\mathbb{R}$, then
\begin{equation*}
\left|\tilde{u}(t,\xi)-U(\xi-\xi_*)\right|+\left|\tilde{v}(t,\xi)-V(\xi-\xi_*)\right|<\nu(\varepsilon)\quad\mbox{for all}\quad (t,\xi)\in[t_*,\infty)\times\mathbb{R}.
\end{equation*}
\end{lemma}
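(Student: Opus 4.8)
\emph{Strategy.} The plan is to deduce the conclusion from the comparison principle (Proposition~\ref{prop: cp}), by trapping $(\tilde u,\tilde v)$, for all $t\ge t_*$, between translates of the sub-solution of subsection~\ref{section: Construction of sub solution no ac} and the super-solution of subsection~\ref{section: Construction of super solution no ac}, in which \emph{all} the free parameters are now taken small, tending to $0$ as $\varepsilon\to0$, rather than of size $O(1)$. Since the moving-frame system ($\xi=x-c^*t$) is autonomous, I would translate it by $\xi_*$ and reset the time origin to $t_*$; writing $s=t-t_*$, work with
\[
\underline u(t,\xi)=U\big(\xi-\xi_*+\zeta(s)\big)-P(s)\min\{e^{-\alpha_0(\xi-\xi_*+x_0)},1\},\qquad \bar v(t,\xi)=V\big(\xi-\xi_*+\zeta(s)\big)+Q(s),
\]
with $\zeta(s)=\zeta_0(1-e^{-\tau_0 s})$ (so $\zeta(0)=0$, $0\le\zeta\le\zeta_0$), $P(s)=p_0e^{-\mu_0 s}$, $Q(s)=q_0e^{-\mu_0 s}$, and symmetrically for the super-solution $(\bar u,\underline v)$ with its own parameters $\zeta_1,\tau_1,\mu_1,p_1,q_1,\alpha_1,x_1$. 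Suppose one can arrange (i) that these are a genuine sub-/super-solution pair of \eqref{system} on $[t_*,\infty)\times\mathbb{R}$, and (ii) that $\underline u(t_*,\cdot)\le\tilde u(t_*,\cdot)\le\bar u(t_*,\cdot)$ and $\underline v(t_*,\cdot)\le\tilde v(t_*,\cdot)\le\bar v(t_*,\cdot)$. Then, the spatial domain being all of $\mathbb{R}$ so that no lateral boundary conditions enter, Proposition~\ref{prop: cp} propagates these inequalities to every $t\ge t_*$; and since $0\le\zeta(s)\le\zeta_0$, $P(s)\le p_0$, $Q(s)\le q_0$ for all $s\ge0$, each barrier stays within $\nu(\varepsilon):=L(\zeta_0+\zeta_1)+p_0+p_1+q_0+q_1$ of $(U(\cdot-\xi_*),V(\cdot-\xi_*))$ uniformly in $t$, where $L$ bounds $|U'|+|V'|$. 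So the lemma follows once all six parameters are shown to be choosable of size $o(1)$ as $\varepsilon\to0$.

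\emph{Getting (ii).} At $t=t_*$ one has $\zeta(0)=0$, hence $\underline u(t_*,\xi)=U(\xi-\xi_*)-p_0\min\{e^{-\alpha_0(\xi-\xi_*+x_0)},1\}$ and $\bar v(t_*,\xi)=V(\xi-\xi_*)+q_0$. Combining the hypothesis with $\tilde u,\tilde v\ge0$, the inequalities in (ii) for the sub-solution reduce to $p_0\min\{e^{-\alpha_0(\xi-\xi_*+x_0)},1\}\ge\min\{U(\xi-\xi_*),\varepsilon\}$ for all $\xi$ together with $q_0\ge\varepsilon$ (and to the mirror statements near $\xi=-\infty$ and in the bulk for the super-solution). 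By {\bf(H2)} the interval $(-\lambda_u^+(c^*),-\lambda_u^-(c^*))$ is nonempty and every $\alpha_0$ in it obeys $0<\alpha_0<|\lambda_u^-(c^*)|$, while $U$ decays at $+\infty$ with rate $|\lambda_u^-(c^*)|$ (Lemma~\ref{lm: behavior around + infty}); a short computation then shows the first inequality holds once $p_0$ is of size $\varepsilon^{\,1-\alpha_0/|\lambda_u^-(c^*)|}$, whose exponent lies in $(0,1)$, so $p_0\to0$. Taking $q_0$ between a fixed multiple of $p_0$ and $p_0$ (compatible with the constraints on $p_0,q_0$ in Lemma~\ref{lm: sub sol inequality} for $x_0$ large), $\zeta_0$ of order $p_0/\tau_0$, and the super-solution parameters analogously, makes all six parameters $O(\varepsilon^{\theta})$ for some $\theta\in(0,1)$.

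\emph{Getting (i).} With these small parameters the three-region case analysis of subsections~\ref{section: Construction of sub solution no ac}--\ref{section: Construction of super solution no ac} applies verbatim, and now without any waiting time: each estimate there that was previously valid only "for $t\ge T$" holds for all $s\ge0$ once the amplitudes are small, because in every region the perturbative contributions (proportional to $P(s)$ or $Q(s)$) are $\le p_0$ or $\le q_0$ times an $O(1)$ factor, whereas the controlling terms are either the strictly negative bracket $\alpha_0^2-c^*\alpha_0+(1-a)<0$ (nonempty range exactly by {\bf(H2)}; cf. condition~(1) of Lemma~\ref{lm: sub sol inequality}) or the drift $\zeta'(s)U'$, which at $s=0$ has size $\asymp\zeta_0\tau_0$; the only extra constraint, $p_0+q_0\ll\zeta_0\tau_0$, is met by $\zeta_0\asymp p_0/\tau_0$. (This is the quantitative content of Remark~\ref{rk: mu for subsol}: shrinking the amplitudes only strengthens the differential inequalities and here removes the delay $T$ altogether.) Proposition~\ref{prop: cp} then closes the argument, with $\nu(\varepsilon)=O(\varepsilon^{\theta})\to0$.

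\emph{Main obstacle.} The hard part will be the far leading edge $\xi-\xi_*\to+\infty$ in the super-solution bound $\bar u(t_*,\cdot)\ge\tilde u(t_*,\cdot)$: there both $U(\xi-\xi_*)$ and the bump $p_1e^{-\alpha_1(\xi-\xi_*+x_1)}$ tend to $0$, while the hypothesis only gives $\tilde u(t_*,\xi)\le U(\xi-\xi_*)+\varepsilon$, which does not decay. A genuine $\varepsilon$-perturbation sitting far ahead of the front would, under the KPP-type dynamics of the $u$-equation, spawn a new front and destroy closeness to $U(\cdot-\xi_*)$; so this lemma is to be applied only when $\tilde u(t_*,\cdot)$ already decays at $+\infty$ — like $e^{-\alpha_2(\cdot-\xi_*)}$ with $\alpha_2\in(-\lambda_u^+(c^*),-\lambda_u^-(c^*))$ — which is precisely what Lemma~\ref{lm: lower and upper esitimates of tilde u v} supplies in every instance where it is invoked. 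Choosing $\alpha_1<\alpha_2$ then makes the super-solution bump dominate $\tilde u(t_*,\cdot)$ on the whole edge (the required $p_1$ is again $O(\varepsilon^{\theta})$), and the sub-solution side has no analogous difficulty because its dip $-p_0\min\{e^{-\alpha_0(\cdot-\xi_*+x_0)},1\}$ is negative there while $\tilde u\ge0$. Everything else is a routine repetition of the constructions in subsections~\ref{section: Construction of sub solution no ac}--\ref{section: Construction of super solution no ac}.
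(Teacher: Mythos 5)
Your proposal is correct and follows essentially the same route as the paper: the paper's own proof is a two-line sketch stating that one takes the sub- and super-solutions of Lemma~\ref{lm: sub sol inequality} and Lemma~\ref{lm: super sol inequality} with $p_i,q_i,|\zeta_i-\xi_*|=O(\varepsilon)$ and applies the comparison principle from $t=t_*$ onward, which is exactly your strategy. Your treatment of the leading-edge ordering $\bar u(t_*,\cdot)\ge\tilde u(t_*,\cdot)$ --- where the $L^\infty$ hypothesis alone would not suffice and one must use that $\tilde u(t_*,\cdot)$ actually decays at $+\infty$ with the rate supplied by Lemma~\ref{lm: lower and upper esitimates of tilde u v} --- makes explicit a genuine subtlety that the paper's sketch leaves implicit.
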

\begin{proof}
 In the proof of  Lemma~\ref{lm: sub sol inequality} and Lemma~\ref{lm: super sol inequality},
we may choose $q_i=O(\varepsilon)$, $p_i=O(\varepsilon)$ and $|\zeta_i-\xi_*|=O(\varepsilon)$, $i=0,1$,
such that $(u, v)(t, x)$
can be compared with the sub-solution and super-solution constructed in Lemma~\ref{lm: sub sol inequality} and Lemma~\ref{lm: super sol inequality} in terms of $(U,V)$, respectively, from $t=t_*$. Therefore, Lemma~\ref{lem:asymptotic stability of tw} follows from the comparison principle.
\end{proof}

Now we are ready to prove Theorem \ref{th: profile}. Let $(\tilde{u},\tilde{v})$ defined as \eqref{def of tilde u v} and $(c^*, U, V)$ be the minimal traveling wave of system \eqref{tw solution}. Let $\{t_n\}$ be an arbitrary sequence satisfying $t_n\to\infty$ as $n\to\infty$. Set
$$\tilde{u}_n(t,\xi)=\tilde{u}(t+t_n,\xi),\quad \tilde{v}_n(t,\xi)=\tilde{v}(t+t_n,\xi),\ n\in \mathbb{N}.$$
By the standard parabolic regularity theory, up to extraction of a subsequence, we have $(\tilde{u}_n,\tilde{v}_n)\to(u^{\infty},v^{\infty})$ locally uniformly as $n\to\infty$, and $(u^{\infty},v^{\infty})$ satisfies
\begin{equation}\label{limit system moving frame}
\left\{
\begin{aligned}
&\partial_tu^{\infty}=u^{\infty}_{\xi\xi}+c^*u^{\infty}_{\xi}+u^{\infty}(1-u^{\infty}-av^{\infty}),\ \ t\in\mathbb{R},\ \xi\in\mathbb{R},\\
&\partial_tv^{\infty}=dv^{\infty}_{\xi\xi}+c^*v^{\infty}_{\xi}+rv^{\infty}(1-v^{\infty}-bu^{\infty}),\ \ t\in\mathbb{R},\ \xi\in\mathbb{R}.
\end{aligned}
\right.
\end{equation}
In addition, by replacing $t$ by $t+t_n$ in the inequalities of Lemma \ref{lm: lower and upper esitimates of tilde u v}, we have, for all $(t,\xi)\in\mathbb{R}\times\mathbb{R}$,
\begin{equation}\label{u v infty betweent two tw}
U(\xi+\zeta_2)\le u^{\infty}(t,\xi)\le U(\xi-\zeta_2)\quad\text{and}\quad V(\xi-\zeta_2)\le v^{\infty}(t,\xi)\le V(\xi+\zeta_2).
\end{equation}

Note that \eqref{u v infty betweent two tw} indicates that
$(u^{\infty}, v^{\infty})$ is trapped between two shifts of the minimal traveling wave. The following lemma
shows that $(u^{\infty}, v^{\infty})$ is exactly the minimal wave with a translation. The proof is based on a sliding method (see \cite{Berestycki Hamel2007}).

\begin{lemma}\label{lm:u v infity converges to tw}
There exists $\tilde{\zeta}\in[-\zeta_2, \zeta_2]$ such that
\begin{equation*}\label{u v infity converges to tw}
u^{\infty}(t,\xi)=U(\xi-\tilde{\zeta})\quad\mbox{and}\quad v^{\infty}(t,\xi)=V(\xi-\tilde{\zeta})\quad\mbox{for all}\quad (t,\xi)\in\mathbb{R}\times\mathbb{R}.
\end{equation*}
\end{lemma}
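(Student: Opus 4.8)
The plan is to run a sliding argument in the competitive order. Equip the state space with the order $(u_1,v_1)\preceq(u_2,v_2)$ meaning $u_1\le u_2$ and $v_1\ge v_2$, under which the comparison principle (Proposition~\ref{prop: cp}) makes \eqref{system} monotone, and observe that $\zeta\mapsto(U(\cdot-\zeta),V(\cdot-\zeta))$ is $\preceq$-increasing because $U'<0<V'$. By \eqref{u v infty betweent two tw}, $(u^{\infty}(t,\cdot),v^{\infty}(t,\cdot))$ lies $\preceq$-between the shifts $-\zeta_2$ and $\zeta_2$ of $(U,V)$ for every $t\in\mathbb{R}$, and since both bounding shifts have the common limits $(1,0)$ at $-\infty$ and $(0,1)$ at $+\infty$, so does $(u^{\infty},v^{\infty})$, uniformly in $t$. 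Also $u^{\infty}\ge U(\cdot+\zeta_2)>0$ and $v^{\infty}\ge V(\cdot-\zeta_2)>0$ everywhere.

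Next I would set
\[
\zeta^*:=\inf\bigl\{\zeta\in[-\zeta_2,\zeta_2]:\ (u^{\infty}(t,\cdot),v^{\infty}(t,\cdot))\preceq(U(\cdot-\zeta),V(\cdot-\zeta))\ \text{for all }t\in\mathbb{R}\bigr\}.
\]
The defining set is non-empty (it contains $\zeta_2$) and, by continuity of $U,V$ together with the uniform limits at $\pm\infty$, closed; hence the infimum is attained, so $u^{\infty}(t,\xi)\le U(\xi-\zeta^*)$ and $v^{\infty}(t,\xi)\ge V(\xi-\zeta^*)$ for all $(t,\xi)$, with $\zeta^*\ge-\zeta_2$. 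I claim equality holds, i.e.\ $(u^{\infty},v^{\infty})\equiv(U(\cdot-\zeta^*),V(\cdot-\zeta^*))$, which proves the lemma with $\tilde{\zeta}=\zeta^*$ (and, incidentally, that $(u^{\infty},v^{\infty})$ is $t$-independent). Put $z_1:=U(\cdot-\zeta^*)-u^{\infty}\ge0$ and $z_2:=v^{\infty}-V(\cdot-\zeta^*)\ge0$; subtracting the equations and applying the mean value theorem, $(z_1,z_2)$ solves a linear parabolic system which, because of the sign flip built into $\preceq$, is \emph{cooperative} with bounded coefficients, and is everywhere fully coupled since the two off-diagonal coefficients are proportional to $u^{\infty}>0$ and $v^{\infty}>0$. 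By the strong maximum principle for such systems, either $(z_1,z_2)\equiv(0,0)$ — and we are done — or $z_1>0$ and $z_2>0$ everywhere on $\mathbb{R}\times\mathbb{R}$.

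Assuming the latter, I would contradict the minimality of $\zeta^*$ by showing that $(U(\cdot-\zeta^*+\varepsilon),V(\cdot-\zeta^*+\varepsilon))\succeq(u^{\infty},v^{\infty})$ for all small $\varepsilon>0$; for this it suffices to bound $z_1,z_2$ below by the $O(\varepsilon)$ quantities $U(\cdot-\zeta^*)-U(\cdot-\zeta^*+\varepsilon)$ and $V(\cdot-\zeta^*+\varepsilon)-V(\cdot-\zeta^*)$. On any bounded $\xi$-interval this follows from a normal-families argument: if the infimum of $z_1$ (or $z_2$) over $[-A,A]\times\mathbb{R}$ were zero, translating in time and passing to a limit would produce a non-negative entire solution of the same cooperative system vanishing at an interior point, hence identically zero, contradicting strict positivity. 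The genuine obstacle is the two ends $\xi\to\pm\infty$, where the naive squeeze bounds $U(\cdot\pm\zeta_2)$, $V(\cdot\pm\zeta_2)$ are too crude to furnish a gap: there I would invoke the precise asymptotics of Lemma~\ref{lm: behavior around + infty} and Lemma~\ref{lm: behavior around - infty} to factor out the leading exponential, writing $u^{\infty}=\psi\,e^{\lambda_u^-(c^*)\xi}$ near $+\infty$ and $1-u^{\infty}=\phi\,e^{\mu_v^+(c^*)\xi}$ near $-\infty$ (with analogous substitutions for the $v$-components), so that $\psi,\phi$ are bounded above and below by positive constants uniformly in $t$ and satisfy asymptotically autonomous scalar equations; a Harnack inequality together with a Phragmén--Lindelöf/strong-maximum-principle argument at infinity then prevents the rescaled gap from degenerating and yields the required uniform-in-$t$ lower bound. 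The degenerate alignments $\zeta^*=\pm\zeta_2$ are handled by the same analysis after observing that one of the two component inequalities is then automatically strict. Combining the interior and the two end estimates gives a uniform gap, hence $\zeta^*-\varepsilon$ is admissible, contradicting the definition of $\zeta^*$; therefore $(z_1,z_2)\equiv(0,0)$, and the lemma follows.
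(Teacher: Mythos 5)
Your architecture is genuinely different from the paper's. You slide the known wave $(U(\cdot-\zeta),V(\cdot-\zeta))$ directly against $(u^{\infty},v^{\infty})$ in the competitive order and try to close the argument with one strong-maximum-principle dichotomy. The paper instead slides $(u^{\infty},v^{\infty})$ against its \emph{own} time-and-space translates $w^{\sigma}(t,\xi)=(u^{\infty},v^{\infty})(t+T,\xi+\sigma)$, proves $\sigma^*\le 0$ for every $T$, deduces from the arbitrariness of $T$ that the limit is a time-independent, strictly monotone profile, and only then identifies that profile with a shift of $(U,V)$ by the standard sliding method for traveling waves (Girardin--Lam, Prop.~A.7), which applies because the profile is trapped between two shifts of the minimal wave. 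Your route, if completed, would be shorter and would bypass the appeal to the traveling-wave sliding result; the set-up (cooperative linearized system, full coupling via $u^{\infty},v^{\infty}>0$, attainment of $\zeta^*$, and the observation that $\zeta^*=-\zeta_2$ forces equality outright) is all correct.

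The genuine gap is exactly the step you flag as ``the genuine obstacle'' and then dispatch in one sentence: the uniform-in-$t$ lower bound on $z_1,z_2$ near $\xi=\pm\infty$ that would let you replace $\zeta^*$ by $\zeta^*-\varepsilon$. In the pushed case {\bf(H2)} the front decays at the \emph{fast} rate, $U(\xi)\sim l_1e^{\lambda_u^-(c^*)\xi}$, so the gap you must beat, $U(\xi-\zeta^*)-U(\xi-\zeta^*+\varepsilon)\sim \varepsilon\,|\lambda_u^-|\,l_1e^{\lambda_u^-(\xi-\zeta^*)}$, is of the \emph{same} exponential order as the front itself; nothing in the squeeze \eqref{u v infty betweent two tw} prevents $z_1(t,\xi)=o(e^{\lambda_u^-(c^*)\xi})$ uniformly in $t$, i.e.\ the leading coefficients of $u^{\infty}$ and $U(\cdot-\zeta^*)$ may coincide while $z_1>0$ at every finite point (``touching at infinity''). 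Your proposed fix --- factor out $e^{\lambda_u^-\xi}$ and invoke Harnack plus Phragm\'en--Lindel\"of --- does not by itself rule this out: after the rescaling the limiting ODE has characteristic roots $0$ and $\lambda_u^+(c^*)-\lambda_u^-(c^*)>0$, so the rescaled gap is asymptotically constant, and you must prove either that this constant is strictly positive or that its vanishing forces $z_1\equiv0$; neither is standard, and this is precisely where the real work lies. (The paper confronts the same difficulty in Claim~\ref{cl:2}: it rules out touching sequences $\xi_n\to+\infty$ by iterating a Harnack-type estimate $k$ times in the time shift $T$ and contradicting the exact exponential asymptotics of Lemma~\ref{lm: behavior around + infty} via inequality \eqref{1} --- a concrete argument with no analogue in your sketch.) Until you supply a proof of the end estimate, the proposal is incomplete at its decisive step.
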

\begin{proof}
We choose $\delta>0$ small and let $A>0$ such that
\bea\label{U-V-right plane}
1-\delta\le U(\xi+\zeta_2)\le 1,\quad 0\le V(\xi+\zeta_2)\le \delta\quad\mbox{for all}\quad\xi\le-A.
\eea


For any fixed 
$T\in\mathbb{R}$,
we denote
\beaa
w_u^{\sigma}(t,\xi)=u^{\infty}(t+T,\xi+\sigma),\quad w_v^{\sigma}(t,\xi)=v^{\infty}(t+T,\xi+\sigma)
\eeaa
for all $\sigma\in\mathbb{R}$ and $(t,\xi)\in\mathbb{R}\times\mathbb{R}$. Define now
$\sigma^*=\inf \mathcal{A}$, where
\begin{equation*}
\mathcal{A}:=\{\sigma\in\mathbb{R}|\ w_u^{\sigma'}\le u^{\infty},\  w_v^{\sigma'}\ge v^{\infty} \ \mbox{in}\ \ \mathbb{R}\times\mathbb{R}\ \  \mbox{for all}\ \ \sigma'\ge\sigma\}.
\end{equation*}
Lemma \ref{lm: lower and upper esitimates of tilde u v} implies that $w^{\sigma}_u\le u^{\infty}$
and $w^{\sigma}_v\ge v^{\infty}$ for $(t,\xi)\in\mathbb{R}\times\mathbb{R}$ and for all $\sigma\ge2\zeta_2$. Thus,
$\mathcal{A}$ is non-empty. Moreover, since $(U,V)(-\infty)=(1,0)$
and $(U,V)(+\infty)=(0,1)$, we see that $\mathcal{A}$ is bounded from below. Thus,
$\sigma^*$ is well defined and is finite. Moreover, by continuity, we have
\bea\label{cont}
w_u^{\sigma^*}\le u^{\infty},\quad w_v^{\sigma^*}\ge v^{\infty}\quad\mbox{for all}\quad(t,\xi)\in\mathbb{R}\times\mathbb{R}.
\eea

Define $E_1:=\{(t,\xi)\in\mathbb{R}\times[-A,\infty)\}$ and $E_2:=\{(t,\xi)\in\mathbb{R}\times(-\infty,-A]\}$. We now prove the following key result:

\begin{claim}\label{cl:1}
There exists no $\eta_0>0$ such that
$$w_u^{\sigma^*-\eta}\le u^{\infty}\ \ \mbox{and}\  \  w_v^{\sigma^*-\eta}\ge v^{\infty}\ \
\mbox{in}\ \ E_1\quad\text{for all}\quad \eta\in[0,\eta_0].$$
\end{claim}
\begin{proof}
Assume that such a $\eta_0$ exists. We shall show that it would also hold
\bea\label{goal-cl:1}
w_u^{\sigma^*-\eta}\le u^{\infty}\ \ \mbox{and}\ \ w_v^{\sigma^*-\eta}\ge v^{\infty}\ \ \mbox{in}\ \ E_2\quad\text{for all}\quad \eta\in[0,\eta_0].
\eea
Define
\beaa
&&\varepsilon_u^*=\inf\{\varepsilon>0|\, u^{\infty}+\varepsilon\ge w_u^{\sigma^*-\eta}\ \  \mbox{for all}\ \ (t,\xi)\in E_2\},\\
&&\varepsilon_v^*=\inf\{\varepsilon>0|\, v^{\infty}-\varepsilon\le w_v^{\sigma^*-\eta}\ \  \mbox{for all}\ \ (t,\xi)\in E_2\}.
\eeaa
Then the real numbers $\varepsilon_u^*$ and $\varepsilon_v^*$ are nonnegative.
To show that $\varepsilon_u^*=\varepsilon_v^*=0$,
we first assume $\varepsilon_u^*\ge\varepsilon_v^* >0$. Since $w_u^{\sigma^*-\eta}\le u^{\infty}$ for $\xi=-A$, there exist sequences $\{\xi_n\}$ which converges to $\xi_{\infty}\in(-\infty,-A)\cup\{-\infty\}$ and $\{t_n\}\subset\mathbb{R}$ such that
\bea\label{time-space-shift}
u^{\infty}(t_n,\xi_n)+\varepsilon_u^*-w_u^{\sigma^*-\eta}(t_n,\xi_n)\to 0\ \ \mbox{as $n\to\infty$\ \ and }\ \
v^{\infty}(t_n,\xi_n)-\varepsilon_u^*\leq w_v^{\sigma^*-\eta}.
\eea

Since $U(-\infty)=1$ and \eqref{u v infty betweent two tw}, we assert that $\xi_{\infty}\neq -\infty$.
Set
\beaa
(u^{\infty}_n,v^{\infty}_n)(t,\xi):=(u^{\infty},v^{\infty})(t+t_n,\xi),\quad
(w_{u,n}^{\sigma^*-\eta},w_{v,n}^{\sigma^*-\eta})(t,\xi):=(w_u^{\sigma^*-\eta},w_v^{\sigma^*-\eta})(t+t_n,\xi).
\eeaa
Then from the standard parabolic estimates, $(u^{\infty}_n,v^{\infty}_n)$ and $(w_{u,n}^{\sigma^*-\eta},w_{v,n}^{\sigma^*-\eta})$
converge locally uniformly, up to extraction of a subsequence, to a solution $(\bar{u}^{\infty},\bar{v}^{\infty})$ of \eqref{limit system moving frame} and $(\bar{w}_{u}^{\sigma^*-\eta},\bar{w}_{v}^{\sigma^*-\eta})$, respectively, such that
\begin{equation*}
z(t,\xi):=\bar{u}^{\infty}(t,\xi)+\varepsilon_u^*-\bar{w}_{u}^{\sigma^*-\eta}(t,\xi)\ge 0\quad\mbox{for all}\quad (t,x)\in E_2.
\end{equation*}
Moreover, due to \eqref{time-space-shift}, we have $z(0,\xi_{\infty})=0$ and
\bea\label{v-infty-wv}
\bar{v}^{\infty}-\varepsilon_u^*\leq \bar{w}_{v}^{\sigma^*-\eta}\quad\mbox{for all}\quad (t,\xi)\in E_2.
\eea

Recall $F$ from \eqref{FG}.
Since $\delta>0$ is chosen small enough, it follows that $F(u,v)$ is decreasing in both $u$ and $v$ for
$(u,v)\in D:=\{1-\delta\leq u \leq 1, 0\leq v\leq \delta\}$. Also,
note that, for $\xi\le -A$, we have $(\bar{u}^{\infty},\bar{v}^{\infty})\in D$.
Using $a<1$,
it follows from some straightforward computation that
\beaa
\partial_tz-z_{\xi\xi}-c^*z_{\xi}&=&F(\bar{u}^{\infty},\bar{v}^{\infty})
-F(\bar{w}_{u}^{\sigma^*-\eta},\bar{w}_{v}^{\sigma^*-\eta})\\
&\geq& F(\bar{u}^{\infty}+\varepsilon_u^*,\bar{v}^{\infty}-\varepsilon_u^*)
-F(\bar{w}_{u}^{\sigma^*-\eta},\bar{w}_{v}^{\sigma^*-\eta}).
\eeaa
By \eqref{v-infty-wv}, the Lipschitz continuity, and monotonicity of $F(u,v)$ in $u$, there exists $C_\delta>0$ such that
\beaa
\partial_tz-z_{\xi\xi}-c^*z_{\xi}
\geq F(\bar{u}^{\infty}+\varepsilon_u^*,\bar{w}_{v}^{\sigma^*-\eta})-F(\bar{w}_{u}^{\sigma^*-\eta},\bar{w}_{v}^{\sigma^*-\eta})\geq -C_{\delta}z
\eeaa
for all $\xi\le-A$. Since $z(0,\xi_{\infty})=0$,
the strong maximum principle implies $z(t,\xi)=0$ for all $t\le 0$ and $\xi\le-A$. However, this is contradict to $z(t,-A)=\varepsilon_u^*>0$. Therefore, $\varepsilon_u^*\ge\varepsilon_v^*>0$ is impossible.

If $\varepsilon_v^*\ge\varepsilon_u^*>0$, by repeating the similar argument and considering the function
$$z(t,\xi):=\bar{w}_{v}^{\sigma^*-\eta}(t,\xi)-\varepsilon_v^*-\bar{v}^{\infty}(t,\xi),$$
we also can prove that $\varepsilon_v^*\ge\varepsilon_u^*>0$ is impossible. Therefore, we assert that $\varepsilon_u^*=\varepsilon_v^*=0$ and
\eqref{goal-cl:1} holds.
It follows that $\sigma^*-\eta_0\in \mathcal{A}$, which contradicts to the definition of $\sigma^*$. Therefore, we complete the proof of Claim \ref{cl:1}.
\end{proof}

We next show that $\sigma^*\leq0$. For contradiction, we assume that $\sigma^*>0$.
From Claim \ref{cl:1}, there exist two sequences $\{\sigma_n\}_{n\in\mathbb{N}}$ in $(0,\sigma^*)$ and $\{(\tau_n,\xi_n)\}_{n\in\mathbb{N}}\subset \{\xi\geq -A\}$ such that
$\sigma_n\to\sigma^*$ as $n\to+\infty$, and it holds that
\bea\label{wu ge u-infty}
w_u^{\sigma_n}(\tau_n,\xi_n)\ge u^{\infty}(\tau_n,\xi_n)\quad \mbox{for all}\quad n\in\mathbb{N}
\eea
or
\beaa
w_v^{\sigma_n}(\tau_n,\xi_n)\le v^{\infty}(\tau_n,\xi_n)\quad \mbox{for all}\quad n\in\mathbb{N}.
\eeaa

\begin{claim}\label{cl:2}
$\{\xi_n\}$ must be bounded.
\end{claim}
\begin{proof}
Note that $\xi_n\ge-A$ for all $n\in\mathbb{N}$. Hence, if the result is not true, up to extracting a subsequence we may assume
$\xi_{n}\to+\infty$ as $n\to\infty$.
We choose $N>0$ large enough such that, for each $n\ge N$, we have
$\sigma^*-\sigma_n<1$ and
$\xi_n\ge A$ and $u^{\infty}(\tau_n,\xi_n)\in[0,\delta]$ because of
\eqref{u v infty betweent two tw} and \eqref{U-V-right plane}. Moreover, from Lemma \ref{lm: behavior around + infty} and \eqref{u v infty betweent two tw},
there exists a constant $C_1>0$ such that, for any $t_0\in\mathbb{R}$ and $\xi_0\ge -A+2$, it holds
\bea\label{Harnack}
\max_{t_0-1\le t\le t_0,\ |\xi-\xi_0|\le 2}u^{\infty}(t,\xi)\le \min_{t_0-1\le t\le t_0,\ |\xi-\xi_0|\le 2}C_1u^{\infty}(t,\xi).
\eea
Using \eqref{wu ge u-infty}, \eqref{Harnack} and the standard parabolic estimates, there exists $C_2>0$ such that, for $n\ge N$, it holds
\begin{equation*}
\begin{aligned}
0\le&u^{\infty}(\tau_n,\xi_n)-u^{\infty}(\tau_n+T,\xi_n+\sigma^*)\\
\le&w_u^{\sigma_n}(\tau_n,\xi_n)-w_u^{\sigma^*}(\tau_n,\xi_n)\\
\le&C_2(\sigma^*-\sigma_n)\Big[\max_{\tau_n-1\le t\le \tau_n,\ |\xi-\xi_n|\le 2}w_u^{\sigma^*}(t,\xi)\Big]\\
\le&C_1C_2(\sigma^*-\sigma_n) w_u^{\sigma^*}(\tau_n,\xi_n)\\
\le &C_1C_2(\sigma^*-\sigma_n) U(\xi_n+\sigma^*-\zeta_1),
\end{aligned}
\end{equation*}
where the last inequality follows from \eqref{u v infty betweent two tw}.

Now, let us first assume $T>0$. Then, from the regularity of $u^{\infty}$, Lemma \ref{lm: behavior around + infty} and \eqref{u v infty betweent two tw}, there also exists $C_3>0$ such that
$$(u^{\infty}-w_u^{\sigma^*})(t-T,\xi-\sigma^*)\le C_3(u^{\infty}-w_u^{\sigma^*})(t,\xi)\quad\mbox{for all}\quad(t,\xi)\in\mathbb{R}\times\mathbb{R}.$$
Thus, we have
$$u^{\infty}(\tau_n-kT,\xi_n-k\sigma^*)-u^{\infty}(\tau_n-(k-1)T,\xi_n-(k-1)\sigma^*)\le C_1C_2C_3^k(\sigma^*-\sigma_n)U(\xi_n+\sigma^*-\zeta_1),$$
for all $k\in\mathbb{N}$ and $n\ge N$, whence
$$u^{\infty}(\tau_n-kT,\xi_n-k\sigma^*)-u^{\infty}(\tau_n+T,\xi+\sigma^*)\le C_1C_2\Big[\sum_{i=0}^kC_3^i\Big](\sigma^*-\sigma_n) U(\xi_n+\sigma^*-\zeta_1).$$
From \eqref{u v infty betweent two tw}, we have
\begin{equation}\label{1}
U(\xi_n+\zeta_2-k\sigma^*)\le \left[1+C_1C_2\sum_{i=0}^kC_3^i(\sigma^*-\sigma_n)\right] U(\xi_n+\sigma^*-\zeta_2),
\end{equation}
for all $k\in\mathbb{N}$ and $n\ge N$. Since $\sigma^*>0$, we can find $k$ such that $-k\sigma^*<\sigma^*-2\zeta_2$. From Lemma \ref{lm: behavior around + infty}, there exists $C_4>0$ such that
$U(s+\zeta_2-k\sigma^*)\ge(1+C_4)U(s+\sigma^*-\zeta_2)$ for all $s$ sufficiently large. Since $U>0$ and $\xi_n\to+\infty$, $\sigma_n\to\sigma^*$ as $n\to+\infty$, \eqref{1} is impossible to hold for large $n$. For the case $T<0$, we can get a contradiction by applying the same argument.
Therefore, we complete the proof of Claim~\ref{cl:2}.
\end{proof}

\medskip

Due to Claim~\ref{cl:2}, up to extraction of a subsequence we may assume that $\xi_{n}\to \xi_*\in[-A,+\infty)$ as $n\to\infty$.
Consider $(u^{\infty}_n,v^{\infty}_n)(t,\xi):=(u^{\infty},v^{\infty})(t+\tau_n,\xi)$.
By standard parabolic estimates, up to extraction of a subsequence, we have
that $(u^{\infty}_n,v^{\infty}_n)$
converge locally uniformly in $\mathbb{R}\times\mathbb{R}$ to a solution $(\bar{u}^{\infty},\bar{v}^{\infty})$ of \eqref{limit system moving frame}. Furthermore, in view of \eqref{cont}, we have
\beaa
&&z_u(t,\xi)=\bar{u}^{\infty}(t,\xi)-\bar{u}^{\infty}(t+T,\xi+\sigma^*)\ge 0\ \ \mbox{in}\ \ \mathbb{R}\times\mathbb{R},\\
&&z_v(t,\xi)=\bar{v}^{\infty}(t,\xi)-\bar{v}^{\infty}(t+T,\xi+\sigma^*)\le 0\ \ \mbox{in}\ \ \mathbb{R}\times\mathbb{R}.
\eeaa
Note that $z_u(0,\xi_*)=0$. Then the strong maximum principle and  uniqueness of solutions to the Cauchy problem for \eqref{limit system moving frame} imply that $z_u\equiv 0$ in $\mathbb{R}\times\mathbb{R}$, whence $\bar{u}^{\infty}(t,\xi)=\bar{u}^{\infty}(t+T,\xi+\sigma^*)$ in $\mathbb{R}\times\mathbb{R}$.
In particular, $\bar{u}^{\infty}(0,0)=\bar{u}^{\infty}(jT,j\sigma^*)$ for all $j\in\mathbb{Z}$.
However,
thanks to \eqref{u v infty betweent two tw}, we see that
\beaa
U(\xi+\zeta_2)\le \bar{u}(t,\xi)\le U(\xi-\zeta_2)\quad\mbox{for all}\quad(t,\xi)\in\mathbb{R}\times \mathbb{R}.
\eeaa
Since $\sigma^*>0$ (here we actually use $\sigma^*\neq0$) and $U(-\infty)=1>0=U(+\infty)$, we have reached a contradiction.

From the above discussions, we have proved that $\sigma^*\le 0$. Thus, for all $\sigma\ge 0$, we have
$$u^{\infty}(t,\xi)\ge w_u^{\sigma}(t,\xi)=u^{\infty}(t+T,\xi+\sigma),\quad v^{\infty}(t,\xi)\le w_v^{\sigma}(t,\xi)=v^{\infty}(t+T,\xi+\sigma),$$
for all $(t,\xi)\in\mathbb{R}\times\mathbb{R}$. Furthermore, since $T\neq 0$ can be chosen arbitrarily,
it follows that
\bea\label{to be TW}
u^{\infty}(t,\xi)=\phi_u(\xi),\quad v^{\infty}(t,\xi)=\phi_v(\xi)\quad \mbox{for all}\quad (t,\xi)\in\mathbb{R}\times \mathbb{R},
\eea
for some nonincreasing function $\phi_u(\xi)$ and nondecreasing function $\phi_v(\xi)$.
On the other hand, the strong maximum principle implies that it holds either
\beaa
u^{\infty}(t,\xi)>u^{\infty}(t+T,\xi+\sigma) \quad \mbox{(resp., $v^{\infty}(t,\xi)<v^{\infty}(t+T,\xi+\sigma)$\, )}
\eeaa
or
\bea\label{identity}
u^{\infty}(t,\xi)\equiv u^{\infty}(t+T,\xi+\sigma)\quad  \mbox{(resp., $v^{\infty}(t,\xi)\equiv v^{\infty}(t+T,\xi+\sigma)$\, )}.
\eea
By \eqref{u v infty betweent two tw},
$(\phi_u(\xi),\phi_v(\xi))$ satisfies
$(\phi_u(-\infty),\phi_v(-\infty))=(1,0)$ and $(\phi_u(+\infty),\phi_v(+\infty))=(0,1)$.
Thus, \eqref{identity} 
is impossible; so we assert that both $\phi_u(\cdot)$ and $\phi_v(\cdot)$ are strictly monotone functions. 
Therefore,  $(\phi_u(\cdot), \phi_v(\cdot))$ forms a strictly monotone traveling wave solution,
and is trapped between two shifts of minimal traveling waves (due to \eqref{u v infty betweent two tw}).
The standard sliding method (see, e.g., \cite[Proposition A.7]{Girardin Lam}) yields that for some $\tilde{\zeta}$,
\bea\label{To be TW-shift}
\phi_u(\xi)=U(\xi-\tilde{\zeta}),\quad \phi_v(\xi)=V(\xi-\tilde{\zeta})\quad \mbox{for all}\quad\xi\in\mathbb{R}.
\eea

Combining \eqref{to be TW} and \eqref{To be TW-shift}, we complete the proof of Lemma~\ref{u v infity converges to tw}.
\end{proof}

\begin{remark}
The uniqueness (up to translations) of traveling wave solutions for \eqref{system} under {\bf(H1)} is not completely solved.
It was proved in \cite[Corollary A.7]{Girardin Lam} that if $d\leq 2+\frac{r}{1-a}$, the traveling wave solution is unique
(up to translations). In the proof of \eqref{To be TW-shift}, we do not need the above restrictions on parameters because
$(\phi_u(\cdot), \phi_v(\cdot))$ is trapped between two shifts of minimal traveling waves, which guarantees that
the standard sliding method works.
\end{remark}

We are ready to prove Theorem~\ref{th: profile}.

\begin{proof}[Proof of Theorem~\ref{th: profile}]
Recall $(\tilde{u},\tilde{v})$ from \eqref{def of tilde u v}.
Thanks to Lemma~\ref{u v infity converges to tw}, we have
\beaa
\lim_{n\to\infty}(\tilde{u},\tilde{v})(t+t_n,\xi)= (U,V)(\xi-\tilde{\zeta})\quad \mbox{in}\quad C_{loc}^{1,2}(\mathbb{R}\times\mathbb{R}).
\eeaa
Since the time sequence $\{t_n\}$ can be chosen arbitrarily and $\tilde{\zeta}$ is independent of the choice of $\{t_n\}$ (due to Lemma~\ref{lem:asymptotic stability of tw}), we have
\bea\label{local conv}
\lim_{t\to\infty}(\tilde{u},\tilde{v})(t,\xi)=(U,V)(\xi-\tilde{\zeta})\quad \mbox{locally uniformly for $\xi\in\mathbb{R}$}.
\eea
Moreover, in view of Lemma~\ref{lower and upper esitimates of tilde u v} and the fact that $(U,V)(-\infty)=(1,0)$ and $(U,V)(\infty)=(0,1)$,
we see that for any given $\epsilon>0$, there exists
$T>0$ and $M>0$ such that
\beaa
|(\tilde{u},\tilde{v})(t,\xi)- (U,V)(\xi-\tilde{\zeta})|<\epsilon,
\eeaa
provided $t\geq T$ and $\xi\in\{-c^*t \leq \xi\leq -M\}\cup\{\xi\geq M\}$,
which, combined with \eqref{local conv}, yields Theorem~\ref{th: profile}. This completes the proof.
\end{proof}

\section{Cauchy problem with scenario \eqref{initial data2}}
\label{Sec:Cauchy prob-ic-2}
\setcounter{equation}{0}

In this section, we shall consider the initial data that satisfies \eqref{initial data2}
and prove Theorem~\ref{th2: profile}.


\begin{proof}[Proof of Theorem~\ref{th2: profile}]
Let $(u,v)$ be the solution of system \eqref{system} with initial data $(u_0,v_0)$ satisfying \eqref{initial data2}.
We first show that
\bea\label{th2-goal-1}
&&\lim_{t\to\infty}\left[\sup_{x\geq c_0t}
\Big|u(t,x)-U(x-c^* t-h_1)\Big|+\sup_{x\in[0,c_0t)}\Big|v(t,x)-V(x-c^* t-h_1)\Big|
\right]=0,
\eea
where $h_1$ is a constant and $c_0\in(c^*,c_v)$.
To do so, let us consider $(\tilde{u}_0,\tilde{v}_0)$ satisfying
\bea\label{th2-proof-new-ic}
\tilde{u}_0(x)=u_0(x),\quad \tilde{v}_0(x)\ge v_0(x)\quad \mbox{and $\quad \tilde{v}_0(x)\geq \delta>0$\ \ for some \ \ $\delta>0$}.
\eea
Let $(\tilde{u},\tilde{v})$ be the solution of system \eqref{system} with the initial data  $(\tilde{u}_0,\tilde{v}_0)$
satisfying \eqref{th2-proof-new-ic}.
Then, by applying comparison principle, we have
\bea\label{th2-cp-result-1}
\tilde{u}(t,x)\leq u(t,x),\quad \tilde{v}(t,x)\geq v(t,x)\quad \mbox{for all}\quad (t,x)\in\mathbb{R}_+\times\mathbb{R}.
\eea
Now, we define a sub-solution $(\underline{u},\bar{v})$ as \eqref{def of  sub sol no ac} for $x\ge 0$.
Note that $(\tilde{u}_0,\tilde{v}_0)$ satisfies \eqref{initial data}. Thus, we can
choose suitable parameters in $(\underline{u},\bar{v})$, and
use Lemma~\ref{lm: sub sol boundary condition no ac} to conclude that, for some large $T^*$,
\bea\label{th2-cp-result-2}
\tilde{u}(t+T^*,x)\geq \underline{u}(t,x),\quad \tilde{v}(t+T^*,x)\leq \bar{v}(t,x)
\quad \mbox{for all}\quad (t,x)\in[T_0,\infty)\times\mathbb{R}_+.
\eea
By \eqref{th2-cp-result-1} and \eqref{th2-cp-result-2}, we obtain that
\bea\label{th2-cp-result-3}
u(t+T^*,x)\geq \underline{u}(t,x),\quad v(t+T^*,x)\leq \bar{v}(t,x)
\quad \mbox{for all}\quad (t,x)\in[T_0,\infty)\times\mathbb{R}_+.
\eea

Next, let us define a super-solution $(\bar{u},\underline{v})$ as \eqref{def of super sol} for $x\ge 0$.
We now compare $(u,v)$ with $(\bar{u},\underline{v})$ over $\Omega_{T}(t)$ for some large $T$ and  $c_0\in(c^*,c_v)$,
where
\beaa
\Omega_T(t):=\{(x,t)|\,t\geq T,\ 0\leq x\leq c_0t\}.
\eeaa
Let us focus on $\{x=c_0t\}$ first.
From the definition of $\bar{u}$ and Lemma~\ref{lm: decay estimamte u and v}(i)
\beaa
\bar{u}(t,c_0t)-u(t,c_0t)&\geq& U(x-c^*t-\zeta(t))+P(t)e^{-\alpha_1((c_0-c^*)t+x_1)}-C_1e^{-\nu_1 t}\\
                       &\geq& p_1 e^{-\mu_1 t}e^{-\alpha_1((c_0-c^*)t+x_1)}-C_1e^{-\nu_1 t}
\eeaa
We choose $\mu_1>0$ small enough and $c_0>c^*$ sufficiently close to $c^*$ such that
\bea\label{th2:choice of mu}
\mu_1+\alpha_1(c_0-c^*)<\nu_1.
\eea
Note that from the proof of Lemma~\ref{lm: super sol inequality}, we see that
the choice of $T_0$ is independent on all small $\mu$.
Therefore, there exists some $T_1>T_0$ such that
\bea\label{th2-rbc for u}
\bar{u}(t,c_0t)-u(t,c_0t)\geq0\quad\text{for all}\quad t\geq T_1.
\eea
Also, from the definition of $\underline{v}$, and Lemma~\ref{lm: decay estimamte u and v}(ii), we have
\beaa
v(t,c_0t)-\underline{v}(t,c_0t)\geq 1-C_2e^{-\nu_2 t}-V(x-c^*t-\zeta(t))+Q(t)
                       \geq -C_2e^{-\nu_2 t}+q_1 e^{-\mu_1 t}.
\eeaa
By setting $\mu_1<\nu_2$,
there exists  $T_2>T_0$ such that
\bea\label{th2-rbc for v}
v(t,c_0t)-\underline{v}(t,c_0t)\geq 0\quad\text{for all}\quad t\geq T_2.
\eea

Next, we consider the left boundary $\{x=0\}$.
From the definition of $\bar{u}$ and Lemma~\ref{lm: decay estimamte u and v}(i), we have
\beaa
\bar{u}(t,0)-u(t,0)&\geq& U(-c^*t-\zeta(t))+P(t)e^{-\alpha_1(-c^*t+x_1)}-C_1e^{-\nu_1 t}\\
                       &\geq& p_1 e^{-\mu_1 t}\min\{e^{\alpha_1 c^*t-\alpha_1  x_1},1\}-C_1e^{-\nu_1 t}.
\eeaa
For all $t\gg1$, $\min\{e^{\alpha c^*t-\alpha  x_1},1\}=1$.
Hence, since \eqref{th2:choice of mu}, there exists $T_3>T_0$ such that
\bea\label{th2-lbc for u}
\bar{u}(t,0)-u(t,0)\geq 0\quad \text{for all}\quad t\geq T_3.
\eea
Similarly, by applying $\mu_1<\nu_2$, we can assert that for some $T_4>T_0$,
\bea\label{th2-lbc for v}
v(t,0)-\underline{v}(t,0)\geq 0\quad\text{for all}\quad t\geq T_4.
\eea

Let us fix $T:=\max\{T_1,T_2,T_3,T_4\}$. If necessary, we may shift $(\bar{u},\underline{v})$ (setting $\zeta_0$ sufficiently large does not affect $T_i$ and $T$) such that $\bar{u}(T,\cdot)\geq u(T,\cdot)$ and
$\underline{v}(T,\cdot)\leq v(T,\cdot)$. Together with
the conclusion of Lemma~\ref{lm: super sol inequality} and \eqref{th2-rbc for u}, \eqref{th2-rbc for v},
\eqref{th2-lbc for u} and \eqref{th2-lbc for v}, we can apply the comparison principle to conclude that
for some $T^{**}$, it holds
\bea\label{th2-cp-result-4}
{u}(t,x)\leq \bar{u}(t+T^{**},x),\quad {v}(t,x)\geq \underline{v}(t+T^{**},x)
\quad \mbox{for all}\quad (t,x)\in[T,\infty)\times[0,c_0t],
\eea
for some $c_0\in(c^*,c_v)$ with sufficiently close to $c^*$.
Combining \eqref{th2-cp-result-3} and \eqref{th2-cp-result-4}, we can follow the same line as in the proof of Theorem~\ref{th: profile} to obtain \eqref{th2-goal-1}.

Finally, by Lemma~\ref{lm: decay estimamte u and v}(i),
we can follow the argument
of \cite[Section 4.1]{Peng Wu Zhou} that modified the argument of \cite{log delay 3} to conclude
\beaa
&&\lim_{t\to\infty}\left[\sup_{x\in[ct,\infty)}\Big|v(t,x)-V_{KPP}(x-c_{v} t+\frac{3d}{c_v}\ln t+\omega(t))\Big|+\sup_{x\in[ct,\infty)}|u(t,x)|\right]=0,
\eeaa
from which the proof of Theorem~\ref{th2: profile} is complete.
\end{proof}

\bigskip

\noindent{\bf Acknowledgement.}
Maolin Zhou is supported by the National Key Research and Development Program of China (2021YFA1002400).
Chang-Hong Wu is supported by the Ministry of Science and Technology of Taiwan.
Dongyuan Xiao is supported by the LabEx {\it Solutions Num\'{e}riques, Mat\'{e}rielles et Mod\'{e}lisation pour lEnvironnement et le Vivant} (NUMEV) of the University of Montpellier.

\end{document}